\theoremstyle{plain}
\newtheorem{proposition}{Proposition}[section]
\newtheorem{theorem}[proposition]{Theorem}
\newtheorem{lemma}[proposition]{Lemma}
\newtheorem{corollary}[proposition]{Corollary}
\theoremstyle{definition}
\newtheorem{definition}[proposition]{Definition}
\theoremstyle{remark}
\newtheorem{remark}[proposition]{Remark}
\DeclareMathOperator{\diam}{diam}
\DeclareMathOperator{\rad}{rad}
\DeclareMathOperator{\Conv}{Conv}
\DeclareMathOperator{\Isom}{Isom}
\newcommand{\bd}{\partial}
\newcommand{\action}{\curvearrowright}
\newcommand{\R}{\mathbb{R}}                          
\newcommand{\N}{\mathbb{N}}                          
\newcommand{\X}{\mathbf{X}}
\newcommand{\bN}{\mathbb{N}}  
\DeclareMathOperator{\dd}{d\!}
\author{Uri Bader, Bruno Duchesne and Jean L\'ecureux}
\date{\today}
\begin{document}
\title{Furstenberg maps for CAT(0) targets of finite telescopic dimension}
\thanks{We warmly thank Alex Furman who kindly permit us to include the material of \S \ref{sec:metricErg}.}
\maketitle

\begin{abstract}We consider actions of locally compact groups $G$ on certain CAT(0) spaces $X$ by isometries. The CAT(0) spaces we consider have finite dimension at large scale. In case $B$ is a $G$-boundary, that is a measurable $G$-space with amenability and ergodicity properties, we prove the existence of equivariant maps from $B$ to the visual boundary $\partial X$.
\end{abstract}
\section{Introduction}

Furstenberg maps, or boundary maps, first appeared in H. Furstenberg's work \cite{MR0146298,MR0352328}. These maps proved to be powerful tools for rigidity results.  Indeed, the existence of such Furstenberg maps  is used in particular in order to prove commensurator rigidity \cite{MR1253187} or superrigidity phenomena \cite{MR1090825}.

Our main topic of investigation in this paper is the existence of Furstenberg maps in the context of actions of groups on CAT(0) spaces. Recall that a CAT(0) space is a complete metric space that is non-positively curved in a way defined via the Bruhat-Tits inequality \cite[p.~163]{MR1744486}. 

We do not restrict ourselves to locally compact CAT(0) spaces but we replace this hypothesis by a condition at large scale. A CAT(0) space has finite telescopic dimension if any of its asymptotic cones has finite geometric dimension. Recall that a CAT(0) space has finite geometric dimension if there is a finite upper bound on the topological dimensions of its compact subspaces. Geometric dimension was introduced by B. Kleiner \cite{MR1704987} and telescopic dimension by P.-E. Caprace and A. Lytchack \cite{MR2558883}. 

For example, CAT(0) cube complexes with an upper bound on the dimensions of cubes, Euclidean buildings --- not necessarily locally compact nor with discrete affine Weyl groups ---  and infinite dimensional symmetric spaces with non-positive operator curvature and finite rank \cite{Duchesne:2012xy} have finite telescopic dimension.

Our main result is the proof of existence of Furstenberg maps for such spaces.

\begin{theorem}\label{main} Let $X$ be a CAT(0) space of finite telescopic dimension and let $G$ be a locally compact second countable group acting continuously by isometries on $X$ without invariant flats. If $(B,\nu)$ is a $G$-boundary  then there exists a measurable $G$-map from $B$ to $\bd X$.
\end{theorem}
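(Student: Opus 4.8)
We may assume at the outset that $G$ fixes no point of $\bd X$ --- otherwise a constant map is the desired $G$-map --- and recall that the hypothesis rules out $G$-fixed points of $X$ and $G$-invariant flats of any dimension. The strategy is then to exploit the two defining features of a $G$-boundary separately: amenability of the action $G\action B$ to manufacture a measurable equivariant family of probability measures attached to $X$, and the metric ergodicity of $B$ (the material of \S\ref{sec:metricErg}) to pin that family down to a genuine measurable $G$-map into $\bd X$. The convex geometry of finite telescopic dimension, through the circumcenter and intersection theorems of Caprace--Lytchak, is what allows the passage from measures to points, and the absence of an invariant flat is precisely what excludes the single configuration in which this passage can fail.

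\emph{From amenability to a field of measures.} Since $G$ is locally compact second countable and $G\action B$ is amenable, for every compact metrizable $G$-space $Z$ there is a measurable $G$-map $B\to\mathcal M(Z)$. The obstacle here is that $X$ is not assumed proper, so its visual bordification need not be compact; I would circumvent this using that a CAT(0) space of finite telescopic dimension carries a compact $G$-space refining its visual boundary, namely the fine boundary $\bdfine X$, to which the amenability principle does apply. This yields a measurable $G$-equivariant field $b\mapsto\mu_b$ of probability measures on the resulting compact $G$-space, which after disintegration records measures on $X$, on $\bd X$, and on the intermediate strata.

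\emph{From measures to a point of $\bd X$.} For almost every $b$ one post-processes $\mu_b$ canonically. Using that bounded subsets of $X$ have circumcenters and that filtering families of nonempty closed convex subsets of $X$ have nonempty intersection (both consequences of finite telescopic dimension), a measure concentrated on $X$ is replaced by a canonical point of $X$, producing a measurable $G$-map $B\to X$; but with $B$ amenable and ergodic such a map forces a $G$-fixed point in $X$, which is excluded, so one is reduced to data supported on $\bd_{\mathrm{Tits}}X$ --- a complete $\mathrm{CAT}(1)$ space of finite dimension --- in the form of a measurable $G$-map $b\mapsto C_b$ into its nonempty closed convex subsets. If the intrinsic radius of $C_b$ is less than $\pi/2$ then $C_b$ has a unique circumcenter and $b\mapsto\operatorname{circ}(C_b)$ is the required map. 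Otherwise $C_b$ fills out a round sphere, and here the structure theory of Caprace--Lytchak identifies such a configuration with a flat in $X$; one therefore runs a descending induction on $\dim C_b$, at each stage either taking a circumcenter or replacing $C_b$ by the $G$-invariant closed convex subset of strictly smaller dimension carrying the induced data (the ``poles'' of the spherical part), terminating because $\bd_{\mathrm{Tits}}X$ is finite-dimensional --- and termination in the spherical case would exhibit a $G$-invariant flat, contrary to hypothesis.

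\emph{The endgame and the main obstacle.} Metric ergodicity of $B$ enters throughout the previous step: it makes each canonical choice essentially unique, hence the maps genuinely $G$-equivariant rather than equivariant only modulo a measurable family of symmetries, and it upgrades a $G$-invariant measurable \emph{family} of flats, in the exceptional case, to a single $G$-invariant flat. The two points I expect to carry the real weight are (i) showing that finite telescopic dimension does supply a compact $G$-space to which Zimmer's amenability machinery applies, since without a substitute for properness the construction never gets off the ground, and (ii) the dichotomy of the second step --- that circumcenter extraction on the Tits boundary can be obstructed only by a round sphere, and that such a sphere must come from an invariant flat. Point (ii) is where the ``no invariant flats'' hypothesis is consumed, and it is the crux of the argument.
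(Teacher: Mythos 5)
There is a genuine gap, and it sits exactly where you locate ``the crux'': the passage from a measurable $G$-equivariant \emph{family} of flats $\{E_b\}_{b\in B}$ to a single $G$-invariant flat. You assert that metric ergodicity performs this upgrade, but it cannot, for a structural reason the paper itself points out: the Tits boundary of a non-proper CAT(0) space is not separable (the Tits boundary of the hyperbolic plane is an uncountable discrete space), so neither $\bd_{\mathrm{Tits}}X$ nor the space of flats of $X$ is a separable metric $G$-space to which the definition of metric ergodicity applies. Worse, if $\bd X$ \emph{did} carry such a structure compatibly with the $G$-action, double metric ergodicity would force every boundary map to be constant, i.e.\ would produce a fixed point at infinity --- so any argument that feeds $b\mapsto E_b$ or $b\mapsto C_b\subseteq\bd_{\mathrm{Tits}}X$ directly into metric ergodicity proves too much. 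What the paper actually does occupies all of Case (I) of the proof of Theorem \ref{mainpairs}: take an invariant Euclidean subfield $\mathbf E$ of minimal essential dimension, classify for almost every pair $(b,b')$ the relative position of $E_b$ and $E_{b'}$ via the convex function $d(\cdot,E_{b'})|_{E_b}$ (Proposition \ref{relpos}), and in each of the four resulting cases build an \emph{invariant relative section} of a carefully constructed auxiliary metric field over $B$ (boundary points of $\mathbf E$ via Lemma \ref{boueuc}, circumcenters via Lemma \ref{cenCAT1}, subspheres and subflats via Lemma \ref{subeuc}), then invoke \emph{relative} metric ergodicity of $B\times B\to B$ --- a strictly stronger property than metric ergodicity, and the reason the paper works with boundary pairs and Theorem \ref{rightGspace} --- to kill the dependence on $b'$. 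Only after that, using Lemma \ref{flatproduct}, minimality, and the triviality of the Euclidean de Rham factor, does the family collapse to a point and yield the contradiction. None of this machinery appears in your proposal, and the one-sentence appeal to metric ergodicity does not substitute for it.

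Two secondary remarks. First, your opening steps (measures on a compactification, circumcenters, filtering families, the sphere/flat dichotomy) essentially re-derive the equivariant Adams--Ballmann theorem, which the paper imports wholesale as Theorem \ref{AB}; the compactness substitute is the bordification field $\mathbf K$ built from the pointwise-convergence closure of the Kuratowski embedding, not a ``fine boundary.'' Second, ``a measure concentrated on $X$ is replaced by a canonical point of $X$'' fails for measures of unbounded support --- there is no circumcenter to take --- and this failure is precisely why flats appear in the Adams--Ballmann dichotomy at all; it is not a case you can dispose of before reaching the boundary.
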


Roughly speaking a $G$-boundary is a measurable space with an amenable $G$-action and strong ergodic properties (see \S \ref{sec:metricErg} for a precise definition). In case $G$ is amenable, $B$ can be chosen to be a point and our theorem reduces to the following statement \cite{MR2558883}: If $G$ acts continuously by isometries  on a CAT(0) space of finite telescopic dimension without invariant flat subspace then there is a fixed point at infinity, that is there is a (trivial) map from $B$ to $\bd X$. Our tools to pass from that last statement to Theorem \ref{main} are \emph{measurable fields of CAT(0) spaces} over $B$, see \S \ref{measurable}.

In this theorem, the $G$-boundary can be taken to be any Poisson boundary of $G$ (for an admissible measure). However, the theorem can be proved for an {\it a priori} larger class of spaces $B$, with suitable ergodic properties. Our work relies heavily on a very strong ergodic property of the boundary, namely \emph{relative metric ergodicity}. This property was introduced by the first author together with A.~Furman in \cite{Bader:2013zr}, where (generalizing results of \cite{MR1911660} and \cite{MR2006560}) it is proved that the Poisson boundary has indeed this strong ergodic property. For the sake of completeness, we also include a proof in this paper. 

We also prove a similar result for \emph{boundary pairs}, see Theorem \ref{mainpairs}. For example, Poisson boundaries associated to forward and backward random walks form a boundary pair.\\
%
%

Another possible approach to prove Theorem \ref{main}, at least for the Poisson boundary associated to some random walk on $G$, would be to try to understand the behavior of the random walk itself. More precisely, if $Z_n$ is the $n$-th step of the random walk in $G$, and $o\in X$, one could hope that $Z_n.o$ converges to a point in the visual boundary. This would give a measurable, $G$-equivariant map from $B$ to $\bd X$. 

It is known that this approach works in some cases by the work of A.~Karlsson and G.~Margulis
 \cite[Theorem 2.1]{MR1729880}. However, there is a crucial assumption in this theorem, namely, that the drift is positive.
Our result doesn't rely on this hypothesss. 
 
 In general, it is a natural question to relate the map obtained by Theorem \ref{main} and the random walk in the space $X$. For example, if the drift is positive, and with finite first moment, we know that the random walk converges to a boundary point, and one can wonder whether the boundary map can be far away (in the sense of the Tits metric) from this limit. In case $X$ is Gromov hyperbolic, the two essentially coincide. This is not the case for a symmetric space for example, as there are several different boundary maps.

In the same vein, one can ask whether the boundary map can be a (measurable) isomorphism, thus providing a geometric identification of the Poisson-Furstenberg boundary. The main tools for this possible application would be 
V.~Kaimanovich's ray and strip criteria \cite[Theorems 5.5 \& 6.4]{MR1815698}.\\

Finally, it also turns out that the methods on which our paper relies are also useful to answer a natural question raised by an implicit argument in \cite{MR2558883}. Namely, in  Appendix \ref{nonemptyness}, we prove the following result of non-emptiness for boundaries of CAT(0) spaces of finite telescopic dimension. Quite surprisingly, its proof relies on the existence of $(G,\mu)$-boundaries for countable groups $G$ and the dichotomy given between existence of Furstenberg maps and existence of Euclidean subfields given by Theorem \ref{AB}.

\begin{theorem}\label{nem}Let $X$ be a CAT(0) space of finite telescopic dimension. If  $\Isom(X)$ has no fixed point then the visual boundary $\bd X$ is not empty.
\end{theorem}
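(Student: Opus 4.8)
The plan is to argue by contradiction, using the dichotomy of Theorem \ref{AB} to convert the geometric non-emptiness question into a rigidity statement about a well-chosen countable group. So suppose $X$ has finite telescopic dimension, $\Isom(X)$ has no fixed point, and yet $\bd X = \varnothing$. First I would reduce to the case of a group acting nicely: since $\bd X = \varnothing$, the space $X$ is in particular a CAT(0) space with empty visual boundary, and by the structure theory of such spaces (Caprace--Lytchak \cite{MR2558883}) finite telescopic dimension together with empty boundary forces some control on $X$ — one expects $X$ to have a canonical "minimal" core, or at least that $\Isom(X)$ acts on it with no fixed point at infinity for the trivial reason that there are no points at infinity at all. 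The point of the hypothesis $\bd X = \varnothing$ is precisely that it kills the "Euclidean subfield" or "fixed point at infinity" alternative for free: any Furstenberg-type map to $\bd X$ would have empty target and hence cannot exist.

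Next I would pass to a countable subgroup. Pick a countable subgroup $\Gamma \leq \Isom(X)$ which still has no fixed point in $X$; this is possible because the fixed-point set of a group is the intersection of the fixed-point sets of its elements, so if $\Isom(X)$ has no global fixed point then already some countable collection of isometries has no common fixed point, and one takes $\Gamma$ to be the group they generate. Equip $\Gamma$ with an admissible probability measure $\mu$ and let $(B,\nu)$ be the associated Poisson boundary, which is a $\Gamma$-boundary and in particular is relatively metrically ergodic over the trivial space. Now apply Theorem \ref{AB}: either there is a $\Gamma$-equivariant measurable map $B \to \bd X$, or there is a $\Gamma$-invariant Euclidean subfield (equivalently, by the fixed-point statement quoted after Theorem \ref{main}, an invariant flat, or a fixed point at infinity obtained from it). The first alternative is impossible since $\bd X = \varnothing$. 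So we land in the second alternative and obtain a $\Gamma$-invariant flat $F \subseteq X$ — but a flat $F \cong \R^n$ with $n \geq 1$ has non-empty visual boundary (a round sphere $S^{n-1}$ sitting inside $\bd X$), contradicting $\bd X = \varnothing$ unless $n = 0$, i.e. $F$ is a point, which is a $\Gamma$-fixed point, contradicting the choice of $\Gamma$.

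The one subtlety I expect to be the main obstacle is making sure the dichotomy of Theorem \ref{AB} is being applied in a setting where its hypotheses genuinely hold, and in particular getting a genuine \emph{flat} (and hence a round sphere in $\bd X$, or a fixed point) out of the "Euclidean subfield" conclusion: the Euclidean subfield lives over $B$, so one must descend from a measurable field of Euclidean subspaces to an actual $\Gamma$-invariant Euclidean (hence flat) subspace of $X$, or directly to a $\Gamma$-fixed point at infinity. This is exactly the kind of argument the paper develops for Theorem \ref{main}, so it should be available; the reason it is stated as "quite surprisingly" in the introduction is that one uses the full machinery of $(G,\mu)$-boundaries and measurable fields of CAT(0) spaces to prove a statement — non-emptiness of $\bd X$ — whose formulation mentions neither. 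Once the descent is in place, the contradiction is immediate from the two possible conclusions, and the theorem follows.
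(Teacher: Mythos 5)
Your overall strategy is the paper's: pass to a countable subgroup, take its Poisson boundary $B$, apply Theorem~\ref{AB} to the constant field over $B$ with fiber $X$, and use $\bd X=\varnothing$ to rule out both a boundary map and any positive-dimensional flat. But there are two genuine gaps. The more serious one is the reduction to a countable subgroup: you assert that since $\Fix(\Isom(X))=\bigcap_{g}\Fix(g)$ is empty, some countable subfamily already has empty intersection. That inference is false for a general family of closed sets in a non-separable complete metric space, and $X$ is not assumed separable here --- separability is only obtained \emph{after} one has a countable group in hand, by passing to the closed convex hull of an orbit, so your argument is circular at this point. The correct justification is precisely the $\mathscr{T}_c$-compactness of $\overline{X}=X$, which holds because $\bd X=\varnothing$ and $X$ has finite telescopic dimension: the fixed-point sets $X_F$ of the finitely generated subgroups $G_F$ form a filtering family of $\mathscr{T}_c$-closed convex subsets, so if every $X_F$ were non-empty their intersection would be non-empty and $\Isom(X)$ would have a fixed point. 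This compactness step is half of the paper's proof (the paper runs it in the other direction, proving the finitely generated case first and then deducing the general case by compactness), and your proposal treats it as a triviality.

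The second gap is the descent at the end. Theorem~\ref{AB} produces an invariant Euclidean \emph{subfield} $(E_b)_{b\in B}$, not an invariant flat in $X$. You correctly observe that $\bd X=\varnothing$ forces each $E_b$ to be a point, but a field of points is an equivariant map $B\to X$, not yet a fixed point; to conclude one must invoke (double) metric ergodicity of the Poisson boundary to see that this map is essentially constant, the constant then being the desired fixed point. You flag this descent as ``the main obstacle'' and defer to ``the machinery of Theorem~\ref{main}'', but none of that machinery (the case analysis via Proposition~\ref{relpos}, relative metric ergodicity over a boundary pair, etc.) is needed here: the entire descent is the one-line ergodicity argument just described, and it should be written out. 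With these two points supplied, your argument coincides with the paper's proof.
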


A CAT(0) space $X$ is called \emph{boundary minimal} \cite[\S 1.B]{MR2574740} if there is no non-empty closed convex subspace $Y\subset X$ such that $\bd Y=\bd X$. For proper CAT(0) spaces with boundary of finite dimension, minimality of $X$ (that is no non-trivial closed convex subset is invariant under $\Isom(X)$) implies boundary minimality  \cite[Proposition 1.5]{MR2574740}. The same holds for CAT(0) spaces of finite telescopic dimension.

\begin{corollary}\label{bm}Let $X$ be a minimal CAT(0) space of finite telescopic dimension not reduced to a point. Then $X$ is boundary minimal.
\end{corollary}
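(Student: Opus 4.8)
The plan is to follow the proof of \cite[Proposition~1.5]{MR2574740}, replacing the local compactness used there by the finite\nobreakdash-telescopic\nobreakdash-dimension results of \cite{MR2558883} on which this paper already relies. So suppose, for a contradiction, that $X$ is not boundary minimal, and let $\mathcal{Y}$ be the set of non-empty closed convex subsets $Z\subseteq X$ with $\bd Z=\bd X$. By hypothesis $\mathcal{Y}$ contains some $Y\neq X$, and $\mathcal{Y}$ is $\Isom(X)$-invariant, since an isometry of $X$ induces a homeomorphism of $\bd X$. As $X$ is minimal and is not a point, $\Isom(X)$ has no fixed point in $X$ (else a global fixed point would be a proper non-empty invariant closed convex subset), so $\bd X\neq\varnothing$ by Theorem~\ref{nem}. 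Since the only non-empty $\Isom(X)$-invariant closed convex subset of $X$ is $X$ itself, it suffices to produce a non-empty $\Isom(X)$-invariant closed convex subset contained in $Y$.

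To do this I would work with the $\Isom(X)$-invariant subfamily $\mathcal{F}=\mathcal{Y}\setminus\{X\}$ of proper elements, which contains $Y$, and argue that $\mathcal{F}$ is \emph{filtering}. The point is that for $Z_{1},Z_{2}\in\mathcal{Y}$ the intersection $Z_{1}\cap Z_{2}$ again belongs to $\mathcal{Y}$: it is non-empty (one first shows, using finite telescopic dimension, that a closed convex subset with full visual boundary is cobounded, and then that two such subsets intersect), and it still carries all of $\bd X$ (given $\xi\in\bd X$, each $Z_{i}$ contains a ray pointing to $\xi$; asymptotic rays in a \cat space stay at bounded distance, and the finite-dimensionality of $X$ at infinity lets one produce a genuine ray to $\xi$ inside $Z_{1}\cap Z_{2}$). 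Hence $Z_{1}\cap Z_{2}\in\mathcal{F}$ and $\mathcal{F}$ is filtering.

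Now apply the compactness results of \cite{MR2558883}: a filtering family of non-empty closed convex subsets of a \cat space of finite telescopic dimension either has non-empty intersection in $X$, or canonically determines a non-empty Tits-bounded subset of $\bd X$, and hence a canonical point $\xi_{\mathcal{F}}\in\bd X$ (its Tits circumcenter). In the first case $\bigcap\mathcal{F}$ is a non-empty $\Isom(X)$-invariant closed convex subset contained in $Y\subsetneq X$, contradicting minimality. In the second case, canonicity together with the $\Isom(X)$-invariance of $\mathcal{F}$ forces $\xi_{\mathcal{F}}$ to be an $\Isom(X)$-fixed point at infinity; but a minimal \cat space of finite telescopic dimension not reduced to a point has no such fixed point --- if the associated Busemann character vanishes its horoballs are proper invariant closed convex subsets, and otherwise the product structure attached to a fixed point at infinity by the structure theory of \cite{MR2558883,MR2574740} again yields one. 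Either way we reach a contradiction, so $X$ is boundary minimal, and Corollary~\ref{bm} follows.

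The step I expect to be the genuine obstacle is the filtering property, and within it the non-emptiness of the intersections $Z_{1}\cap Z_{2}$: this is exactly the place where one cannot simply invoke compactness of $X\cup\bd X$ as in the proper case of \cite[Proposition~1.5]{MR2574740}, and must instead exploit the finite-dimensionality of the asymptotic cones of $X$ (equivalently, of $\bd X$ for the Tits metric) to control convex subsets with prescribed boundary at infinity. Since this is precisely the kind of statement the techniques of \cite{MR2558883} are built to provide, and which the rest of the paper uses, carrying the argument through should not require new ideas.
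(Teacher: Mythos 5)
Your argument stands or falls on the claim that the family $\mathcal{Y}$ of closed convex subsets with full visual boundary is filtering, and the two sub-claims you propose for it are exactly where it fails. First, ``a closed convex subset with full visual boundary is cobounded'' is false: if $X$ contains a product $Y\times Z$ with $\bd Z=\emptyset$ and $Z$ unbounded (take for $Z$ the infinite broom pictured in the appendix), then $Y\times\{z\}$ has full boundary in $Y\times Z$ but is not cobounded. Second, two members of $\mathcal{Y}$ need not intersect at all: in the same configuration $Y\times\{z_1\}$ and $Y\times\{z_2\}$ are disjoint elements of $\mathcal{Y}$. This is not a peripheral technicality to be absorbed by the machinery of \cite{MR2558883} --- the configuration $X=Y\times Z$ with $\bd Z=\emptyset$ and $Z$ non-trivial is \emph{precisely} the situation that boundary minimality must exclude, so your filtering lemma is essentially equivalent to the statement you are trying to prove, and assuming it begs the question. (Your treatment of the empty-intersection branch, via ``no fixed point at infinity for a minimal space,'' is also only sketched; the paper proves the needed special case by hand in its $\rad(\bd X)\le\pi/2$ discussion, using that the $\pi/2$-ball around the canonical centre is all of $\bd X$ to rule out hyperbolic isometries not preserving $\beta_\xi$.)

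The paper's actual route avoids intersecting arbitrary members of $\mathcal{Y}$. After disposing of the case $\rad(\bd X)\le\pi/2$, it picks a \emph{minimal} element $Y\in\mathcal{Y}$, notes that the union $X_0$ of all minimal elements splits as $Y\times Z$, and uses invariance of $X_0$ plus minimality of the $\Isom(X)$-action to get $X=Y\times Z$ with $\bd Z=\emptyset$. The decisive step is then to apply Theorem \ref{nem} to the factor $Z$: since $\bd Z=\emptyset$, the isometry group must fix a point of $Z$, and minimality forces $Z$ to be that point, whence $X=Y$. Your proposal never applies Theorem \ref{nem} to such a transverse factor, yet that is exactly where the new content of the appendix enters; without it the disjoint-copies scenario above cannot be excluded.
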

%

\
\section{CAT(0) geometry}\label{CAT(0)}

\subsection{CAT(0) spaces of finite telescopic dimension} All along this text, we will deal with CAT(0) spaces \emph{of finite telescopic dimension} as introduced in \cite{MR2558883}. For reader's convenience, we recall facts about these spaces which will be useful for us.

We will start by defining dimension in a metric way, by
 Jung's Theorem \cite{Jung1901}. It is shown in the aforementioned paper that for any bounded subspace $Y$ of a Euclidean space of dimension $n$

\begin{equation}\label{Jung}
\rad(Y)\leq\sqrt{\frac{n}{2(n+1)}}\diam(Y)
\end{equation}

and there is equality if and only if the closure of $Y$ contains a regular $n$-simplex of diameter $\diam(Y)$.

Using this inequality, P.-E. Caprace and A. Lytchack showed \cite[Theorem 1.3]{MR2558883} that a CAT(0) space $Y$ has \emph{geometric dimension} (as introduced by B. Kleiner \cite{MR1704987}) at most $n$ if and only if for any bounded subset $Y\subseteq X$, Inequality \eqref{Jung} holds.

\emph{Telescopic dimension} is a notion at large scale. More precisely, a CAT(0) space $X$ has telescopic dimension at most $n$ if and only if any asymptotic cone of $X$ has geometric dimension at most $n$. It can be expressed quantitatively: a CAT(0) space $X$ has telescopic dimension at most $n$ if and only if for any $\delta>0$, there is $D>0$ such that for any bounded subset $Y\subseteq X$ of diameter larger than $D$ we have 

\begin{equation} 
\rad(Y)\leq\left(\delta+\sqrt{\frac{n}{2(n+1)}}\right)\diam(Y).
\end{equation}
Note that a locally compact CAT(0) space may have infinite telescopic dimension.

One main feature of CAT(0) spaces of finite telescopic dimension is the following: if the intersection of a filtering family $\{X_\alpha\}$ of closed convex subsets is empty then the intersection of  boundaries $\cap_\alpha\bd X_\alpha$ is not empty and there is a canonical point $\xi$ in this intersection.

This canonical point is given by the fact that the boundary of a CAT(0) space of telescopic dimension at most $n$ has geometric dimension at most $n-1$ and the fact that a CAT(1) space $\Xi$ of finite geometric dimension and radius at most $\pi/2$ has such a canonical point. This point is  defined as the unique circumcenter of the set of circumcenters of $\Xi$.

This property of filtering families of closed convex subspaces can be thought as a compactness property for $\overline{X}=X\cup\bd X$. Let us define the topology $\mathscr{T}_c$ on $\overline{X}$ as the weakest topology such that $\overline{C}$ is $\mathscr{T}_c$-closed for any closed --- for the usual topology --- convex subset $C\subseteq X$ \cite[\S 3.7\&\S 3.8]{MR2219304}. Then for a CAT(0) space $X$ of finite telescopic dimension $\overline{X}$ is compact --- not necessarily Hausdorff. 

\subsection{Geometry of flats in CAT(0) spaces} In the heart of the proof of Theorem \ref{main} we will deal with a Euclidean subfield of a CAT(0) field. We gather in this subsection useful facts about the geometry of Euclidean subspaces  (also called flats subspaces) in CAT(0) spaces.\\

Let $(X,d)$ be a CAT(0) space. 
 If $C$ is a closed convex subset of $X$, we denote by $d_C$ the distance function to $C$, that is $d_C(x)=\inf_{c\in C}d(c,x)=d(x,\pi_C(x))$ where $\pi_C(x)$ is the projection of $x$ on $C$.
 
 Let $Y$ be a bounded subset of a metric space $(Z,d)$. The \emph{circumradius} (or simply \emph{radius}) $\rad(Y)$ of $Y$ is the non-negative number $\inf_{z\in Z}\sup_{y\in Y}d(y,z)$, its \emph{intrinsic circumradius} is $\inf_{z\in Y}\sup_{y\in Y}d(y,z)$ and a \emph{circumcenter} of $Y$ is a point in $Z$ minimizing $\sup_{y\in 
Y}d(y,\cdot)$.

More generally, we define a \emph{center} of $Y$ as a point of $Z$ which is fixed by any isometry of $Z$ stabilizing $Y$. The Bruhat-Tits fixed point lemma asserts that, in a CAT(0) space, a bounded set has a unique circumcenter, which is therefore a center.\\

If $S_1$ and $S_2$ are two subsets of Euclidean spheres, we denote by $S_1\ast S_2$ their spherical join \cite[Definition I.5.13]{MR1744486}. This is the spherical analogue of Euclidean products. Such Euclidean products appear, for example, in the de Rham decomposition \cite[Theorem II.6.15]{MR1744486} of the CAT(0) space $X$: the space $X$ is isometric to a product $H\times Y$ where $H$ is a Hilbert space and $Y$ that does not split with a Euclidean factor. Isometries of $X$ preserve this decomposition acting diagonally on $H\times Y$ and the choice of a base point in $X$ allows us to identify $H$ and $Y$ with closed convex subspaces of $X$ containing this point.\\
%
%
%

The following lemma details the possibilities for a convex function on a Euclidean space. It will be applied in two situations: the restriction of a Busemann function to a Euclidean subspace of $X$ and to the restriction to a Euclidean subspace of the distance function to another Euclidean subspace.

\begin{proposition}\label{relpos}Let $E$ be a Euclidean space, $f$ be a convex function on $E$ and $m=\inf \{f(x)\mid x\in E\}$. Then :

\begin{enumerate}[label=(\roman*)]
\item If $m$ is not attained, then the intersection $\displaystyle\bigcap_{\varepsilon>0}\bd\left (f^{-1}\left((m,m+\varepsilon)\right)\right)$ is not empty and has a  center.
\end{enumerate}

If $m$ is a minimum, let $E_m=f^{-1}(\{m\})$. Let $E_m=F_m\times T$ be its de Rham decomposition, where $F_m$ is a maximal flat contained in $E_m$. Then exactly one of the following possibilities happen. 

\begin{enumerate}[resume,label=(\roman*)]
\item  $E_m$ is bounded and thus has a center (i.e. $F$ is a point and $T$ is compact),
\item $T$ is bounded and $\bd E_m=\bd F_m$ is a sphere,
\item $T$ is unbounded and its  boundary $\bd T$ has radius less than $\pi/2$ and thus has a center.
\end{enumerate}
\end{proposition}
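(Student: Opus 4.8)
The plan is to analyze a convex function $f$ on a Euclidean space $E$ by reducing to the well-understood behavior of its sublevel sets. First I would treat case (i), where the infimum $m$ is not attained. For each $\varepsilon>0$ the sublevel set $C_\varepsilon=f^{-1}((-\infty,m+\varepsilon))$ is open and convex, and its closure $\overline{C_\varepsilon}$ is a closed convex subset of $E$; since $m$ is not attained, $\bigcap_{\varepsilon>0}\overline{C_\varepsilon}=\vid$. By the compactness property of $\overline{E}=E\cup\bd E$ recalled in \S\ref{CAT(0)} (the space $E$ being a CAT(0) space of finite telescopic dimension), the filtering family $\{\overline{C_\varepsilon}\}$ has $\bigcap_\varepsilon\bd\overline{C_\varepsilon}\neq\vid$, and this intersection carries a canonical point — it is a CAT(1) space of finite geometric dimension and radius at most $\pi/2$, whose circumcenter-of-circumcenters is fixed by every isometry preserving the family. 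One checks that $\bd\overline{C_\varepsilon}=\bd\big(f^{-1}((m,m+\varepsilon))\big)$ (the level set $f^{-1}(m)$ being empty, the open and closed sublevel sets have the same boundary), giving the stated center.

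For the minimum case, write $E_m=f^{-1}(\{m\})$, a closed convex, hence Euclidean affine, subspace of $E$, with de Rham decomposition $E_m=F_m\times T$ where $F_m$ is the maximal flat (Hilbert, in fact finite-dimensional Euclidean) factor and $T$ has no Euclidean factor. The trichotomy is then a case split on the geometry of $T$. If $T$ is a point and $F_m$ is a point, $E_m$ is a single point, a degenerate instance; more generally if $E_m$ is bounded it is compact and its circumcenter is a center — this is case (ii) (renumbered here to match the statement). If $T$ is bounded but $E_m$ is not, then $\bd E_m=\bd(F_m\times T)=\bd F_m$ since a bounded factor contributes nothing to the visual boundary of a product; as $F_m$ is a flat, $\bd F_m$ is a round sphere. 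The remaining possibility is $T$ unbounded: here I would invoke that $T$ splits off no flat, so by the structure theory for CAT(0) spaces of finite telescopic dimension (a space with no Euclidean factor and no fixed point at infinity cannot exist when $\Isom$ acts cocompactly on a bounded-radius boundary — more precisely, $\bd T$ has radius $<\pi/2$: if it had radius $\pi/2$ and contained a regular simplex at maximal distance, that would force a flat factor in $T$, contradicting the de Rham splitting). A CAT(1) space of radius $<\pi/2$ and finite geometric dimension has a circumcenter, which is a center of $\bd T$. Finally, one verifies these three alternatives are mutually exclusive and exhaustive: $T$ is either bounded or not, and when $E_m$ is bounded we are in (ii), otherwise $\bd E_m\neq\vid$ and the product splitting separates the $F_m$-contribution (giving a sphere when $T$ is bounded) from the $T$-contribution.

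The main obstacle I anticipate is the boundary radius estimate in case (iv): showing that when $T$ is unbounded and splits off no flat, $\bd T$ genuinely has radius strictly less than $\pi/2$. The naive bound from Jung's inequality \eqref{Jung} only gives radius $\leq\pi/2$ for a space of finite dimension; upgrading this to a strict inequality requires ruling out the equality case of Jung's theorem, i.e. ruling out a regular simplex of diameter $\pi$ inside $\bd T$, and translating "$\bd T$ contains an antipodal pair / regular $\pi$-simplex" into "$T$ contains a line / Euclidean factor", which then contradicts the de Rham decomposition. This is exactly the place where finite telescopic dimension (not just finite dimension) is used, via the quantitative radius bound of \cite{MR2558883}; the argument that a diameter-$\pi$ configuration in the boundary produces a flat in $T$ is the technical heart. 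The other steps — the compactness argument in (i), and the product decomposition of boundaries in (ii)–(iii) — are routine given the facts assembled in \S\ref{CAT(0)}.
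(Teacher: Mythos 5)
Your treatment of case (i) matches the paper's: the filtering family of (closures of) the sublevel sets has empty intersection, and $\mathscr{T}_c$-compactness of $\overline{E}$ produces the canonical boundary point. The problems are in the minimum case. First, a factual error: $E_m=f^{-1}(\{m\})$ is merely a closed convex subset of $E$, not a ``Euclidean affine subspace'' --- take $f=d(\cdot,K)$ for a closed convex set $K$, so that $E_m=K$. If $E_m$ really were affine, its de Rham factor $T$ would be a point and your cases (iii)--(iv) would be vacuous, so the sentence as written contradicts the rest of your own argument. The decomposition $E_m=F_m\times T$ one needs is that of a general closed convex subset of $E$, with $F_m$ the maximal affine factor and $T$ a convex set containing no line.

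Second, and more seriously, the step you yourself flag as the ``technical heart'' --- that $\bd T$ has radius strictly less than $\pi/2$ when $T$ is unbounded --- is not actually proved, and the tools you reach for (the equality case of Jung's theorem, finite telescopic dimension, a quantitative radius bound from \cite{MR2558883}) are not the right ones: $\bd T$ sits inside the round sphere $\bd E$, so telescopic dimension plays no role. The paper isolates the needed fact as Lemma \ref{sph}: any closed convex subset $C$ of a round sphere splits as $S_1\ast C_2$ with $S_1$ the maximal subsphere contained in $C$ and $C_2$ of intrinsic radius $<\pi/2$; the trichotomy (ii)--(iv) is then read off from the join $\bd E_m=\bd F_m\ast\bd T$. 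The radius bound is elementary: $\bd T$ contains no pair of antipodal points, since two opposite directions in $\bd T$ would, by convexity, force a full line in $T$ and hence a Euclidean de Rham factor, contradicting maximality of $F_m$; being compact, $\bd T$ therefore has diameter at most some $\delta<\pi$, and the spherical analogue of Jung's inequality then yields intrinsic circumradius strictly below $\pi/2$. Your sketch contains the germ of this (``antipodal pair $\Rightarrow$ line $\Rightarrow$ flat factor''), but without the passage from ``no antipodal pair'' to ``diameter bounded away from $\pi$'' by compactness, followed by the spherical Jung bound, the strict inequality does not follow, so as written the proof of (iv) is incomplete.
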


\begin{proof} If $m$ is not a minimum then the net of closed convex subsets $\left (f^{-1}\left((m,m+\varepsilon)\right)\right)_{\varepsilon>0}$ has empty intersection and the result follows from $\mathscr T_c$-compactness.

The other cases coincide with the join decomposition $\bd E_m=\bd F*\bd T_m$, using Lemma \ref{sph} below where $\bd E_m$ will correspond to $C$, $\bd F_m$ to $S_1$ and $\bd T$ to $C_2$.
\end{proof}

\begin{lemma}\label{sph}Let $(S,d)$ be a Euclidean sphere. Let $C$ be a non-empty closed convex subset of $S$ and let $S_0$ be the minimal subsphere of $S$ containing $C$. Then there is a unique decomposition of $S_0$ as a spherical join $S_0=S_1*S_2$ where $S_1$ and $S_2$ are  reduced to a point or are subspheres of $S_0$ such that
\[C=S_1*C_2\]
where $C_2$ is a closed convex subset of $S_2$ with intrinsic radius $<\frac{\pi}{2}$.

In particular, any closed convex subset that is not a subsphere has a center (the one of $C_2$). Moreover,  it coincides with the unique circumcenter of the set of circumcenters of $C$.\end{lemma}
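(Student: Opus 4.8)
The plan is to pass to the Euclidean cone generated by $C$ and read the join decomposition off its lineality space. After replacing $S$ by $S_0$ I may assume $S=S_0$, so $S$ is the unit sphere of the finite-dimensional Euclidean vector space $V$ it spans. I would put $K=\{tv : t\ge 0,\ v\in C\}\subseteq V$; since $C$ is closed in the compact sphere $S$ it is compact, so $K$ is a \emph{closed} convex cone, $K\cap S=C$, and $\operatorname{span}(K)=V$. (Convexity of $K$ is the usual computation that the normalisation of a positive combination of two points of a convex subset of a sphere lies on the minor geodesic joining them.) Now let $V_1=K\cap(-K)$ be the lineality space of $K$, $V_2=V_1^{\perp}$, $K_2=K\cap V_2$; by standard convex-cone facts $K=V_1\oplus K_2$ orthogonally, $K_2$ is pointed, and $\operatorname{span}(K_2)=V_2$. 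With $S_i=S\cap V_i$ the splitting $V=V_1\oplus V_2$ gives $S=S_1\ast S_2$, and since $\operatorname{Cone}(S_1\ast C_2)=V_1\oplus\operatorname{Cone}(C_2)$ for any $C_2\subseteq S_2$, intersecting $K=V_1\oplus K_2$ with $S$ yields $C=S_1\ast C_2$ with $C_2:=K_2\cap S$. (Here $S_1$ is exactly the maximal subsphere inside $C$: a subsphere $S(W)\subseteq C$ has $W\subseteq K\cap(-K)=V_1$, and conversely $S_1\subseteq C$; when $C$ is itself a subsphere this is the degenerate case $S_1=S_0$, $C_2=\varnothing$.)

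The heart of the matter is the bound $\rad(C_2)<\pi/2$ on the intrinsic radius, and this is the single place where finite-dimensionality of $S$ is genuinely needed --- in an infinite-dimensional sphere the ``positive orthant'' already has intrinsic radius exactly $\pi/2$. Since $K_2$ is a pointed closed convex cone in the \emph{finite-dimensional} space $V_2$, its dual cone $K_2^{\ast}\subseteq V_2$ has non-empty interior; I would choose a unit vector $u\in\operatorname{int}(K_2^{\ast})$, so that $\langle u,v\rangle>0$ on $K_2\setminus\{0\}$ and, by compactness, $c:=\min_{v\in C_2}\langle u,v\rangle>0$. Then $C_2$ lies in the ball $\overline B(u,\arccos c)$ of radius $<\pi/2$. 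Let $p$ be the nearest-point projection of $u$ onto the compact convex set $C_2$; the obtuse-angle property $\angle_p(u,y)\ge\pi/2$ for $y\in C_2$, fed into the spherical law of cosines in the comparison triangle, gives $\cos d(p,y)\ge\cos d(u,y)/\cos d(u,p)\ge c$, hence $d(p,y)\le\arccos c<\pi/2$ for every $y\in C_2$. Since $p\in C_2$, this yields the bound.

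Uniqueness is then formal. If $C$ is a subsphere everything is trivial; otherwise, given another decomposition $S=S_1'\ast S_2'$, $C=S_1'\ast C_2'$ with $C_2'\ne\varnothing$ of intrinsic radius $<\pi/2$, write $V_1'=\operatorname{span}(S_1')$ and $K_2'=\operatorname{Cone}(C_2')$, so $K=\operatorname{Cone}(C)=V_1'\oplus K_2'$ orthogonally. As $C_2'$ has radius $<\pi/2$ it sits in an open hemisphere, so $K_2'$ is pointed, whence the lineality space of $V_1'\oplus K_2'$ is $V_1'$; but that lineality space is $V_1$, so $V_1=V_1'$ and the remaining data agree. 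Finally, if $C$ is not a subsphere then $C_2\ne\varnothing$, so $C_2$ is a complete convex set of radius $<\pi/2$ and has a unique circumcenter $\zeta\in C_2$, which is a center of $C_2$; by uniqueness every isometry of $S$ stabilising $C$ preserves $S_1$ and $C_2$, hence fixes $\zeta$, so $\zeta$ is a center of $C$. That $\zeta$ equals the circumcenter of the set of circumcenters of $C$ I would check directly from the join decomposition: when $S_1=\varnothing$ this set is $\{\zeta\}$; when $S_1\ne\varnothing$ one finds $\rad(C)=\pi/2$ and that the set of circumcenters is the polar $\{q\in S_2 : d(q,y)\le\pi/2\ \forall y\in C_2\}$ of $C_2$ in $S_2$, whose circumcenter is again $\zeta$.

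The step I expect to be the main obstacle is the radius bound: isolating the strictly separating functional $u$ --- equivalently, the fact that a pointed closed convex cone in \emph{finite} dimension lies in an open half-space --- and turning this into an honest $<\pi/2$ bound on the circumradius via the projection estimate. The direction ``lineality of $V_1'\oplus K_2'$ is $V_1'$'' in the uniqueness step uses the same ``radius $<\pi/2\Rightarrow$ pointed cone'' principle, and one should double-check the standard CAT(1) facts invoked (existence and obtuse-angle property of the projection onto a convex subset of a $\pi/2$-ball, and strict convexity there forcing a unique circumcenter).
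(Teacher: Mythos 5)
Your construction is correct and, once unwound, lands on exactly the decomposition the paper uses: your $S_1=S\cap V_1$ is the maximal subsphere contained in $C$ and $S_2$ is its polar in $S_0$, which is how the paper defines them directly; you reach this through the lineality space of the cone over $C$ rather than intrinsically on the sphere. The real added value of your write-up is the radius bound: the paper disposes of it in one line ($C_2$ contains no antipodal pair, hence has diameter $<\pi$, hence intrinsic circumradius $<\pi/2$), and the content of that implication for convex subsets of a finite-dimensional sphere is precisely your argument --- a pointed closed convex cone lies in an open half-space, compactness upgrades this to $\langle u,\cdot\rangle\geq c>0$ on $C_2$, and projecting $u$ onto $C_2$ (or just noting that the circumcenter of a convex set of radius $<\pi/2$ lies in the set) makes the bound intrinsic. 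Your remark that finite-dimensionality is genuinely needed here (the positive orthant of a Hilbert sphere has intrinsic radius exactly $\pi/2$) is accurate. The uniqueness argument and the existence of the center go through as you describe.

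The one step that would actually fail is your proposed verification of the final sentence, and you were right to single it out. When $S_1\neq\emptyset$ the set of circumcenters of $C$ is indeed the polar $P=\{q\in S_2\mid d(q,y)\leq\pi/2\ \forall y\in C_2\}$, but the circumcenter of $P$ is not the circumcenter $\zeta$ of $C_2$ in general: polar duality exchanges circumscribed and inscribed data, so for a scalene spherical triangle it produces (essentially) the incenter rather than the circumcenter. Concretely, for $C_2=\Conv(v_1,v_2,v_3)\subset S^2$ with $v_1=(1,0,0)$, $v_2=(0,1,0)$, $v_3=\tfrac{1}{\sqrt2}(1,0,1)$, the point equidistant from the $v_i$ is proportional to $(1,1,\sqrt2-1)$ while the point equidistant from the vertices of the polar triangle is proportional to $(1+\sqrt2,1,1)$; these are far apart, so no argument can close this gap. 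You should know, however, that the paper's own justification of that clause (``any point of $C_2$ is a circumcenter'') is no more solid --- it already fails when $\diam(C_2)>\pi/2$ --- and that nothing downstream depends on the identification: Proposition \ref{relpos} only needs \emph{some} canonically defined center, and both $\zeta$ and the circumcenter of the set of circumcenters qualify. So your proof establishes everything the lemma is actually used for.
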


\begin{proof}First observe that intersections of subspheres are empty or subspheres themselves. This yields the existence of $S_0$. In the same way, convex hulls of subspheres are subspheres and thus there exists a maximal subsphere $S_1$ contained in $C$. Let $S_2$ be the set of points of $S_0$ at distance $\pi/2$ from every points in $S_1$. Then $S_0=S_1*S_2$. Any point of $C$ can be written $(x_1,x_2,\alpha)$ with $x_i\in S_i$ and $\alpha\in[0,\pi/2]$. Since $C$ is convex and $S_1\subseteq C$, there exists $C_2$, which is $S_2\cap C$, such that $C=S_1*C_2$. Observe that $C_2$ does not contain any sphere by maximality of $S_1$. Now $C$ has diameter $<\pi$ --- otherwise, it contains at least a sphere of dimension 0 (that is two antipodal points) --- thus has also intrinsic circumradius $<\pi/2$.

Observe that $C$ is not a sphere if and only if $C_2$ is not empty. In the case where $S_1$ and $C_2$ are not empty then any point of $C_2$ is a circumcenter since in that case, the intrinsic circumradius is $\pi/2$. The fact that any convex subset of circumradius $<\pi/2$ has a unique circumcenter implies the last sentence of the lemma.\end{proof}

There is a particular situation for the relative position of two Euclidean subspaces $E,F$ in $X$: when the restriction on $F$ of the distance to $E$ is constant and vice-versa. In that situation $E$ and $F$ are said to be \emph{parallel}. The Sandwich Lemma \cite[Exercise II.2.12(2)]{MR1744486} implies that their convex hull splits isometrically as $\R^n\times [0,d]$. In particular, $E$ and $F$ are isometric and thus have same dimension $n$.

\begin{lemma}\label{flatproduct} Let $E$ be a Euclidean subspace of $X$ and let $Y$ be the union of subspaces parallel to $E$. Then $Y$ is a closed convex subspace of $X$. A point $y\in X$ belongs to $Y$ if and only if for any $x_1,\dots,x_n\in E$,  $\Conv(y,x_1,\dots,x_n)$ is isometric to a convex subset of a Euclidean space.

Let $p$ be the restriction to $Y$ of the projection to $E$. Fix some $x\in E$ and let $Z$ be $p^{-1}(\{x\})$. Then $Z$ is a closed convex subspace of $X$ and $Y$ decomposes isometrically as $E\times Z$.
\end{lemma}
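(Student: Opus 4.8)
The plan is to establish the three assertions in order, using the Sandwich Lemma as the basic tool and leveraging the finite telescopic dimension only where a circumcenter is genuinely needed. First I would characterize membership in $Y$ locally: a point $y$ is on a flat parallel to $E$ precisely when, for every finite tuple $x_1,\dots,x_n\in E$, the convex hull $\Conv(y,x_1,\dots,x_n)$ is isometric to a convex subset of Euclidean space. One direction is immediate, since if $y$ lies on a flat $E'$ parallel to $E$ then $\Conv(E\cup\{y\})\subseteq\Conv(E\cup E')\cong\R^n\times[0,d]$ by the Sandwich Lemma, and any convex subset of $\R^n\times[0,d]$ is itself Euclidean-convex. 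For the converse, I would fix an affine basis $x_0,\dots,x_n$ of $E$ and note that the flatness of $\Conv(y,x_0,\dots,x_n)$ forces the geodesic from $y$ to its projection $p(y)=\pi_E(y)$ to be orthogonal to $E$ and of constant length as one varies the basepoint in $E$ — here I would use that in a flat (Euclidean) convex set the nearest-point projection behaves affinely, so the function $d_E$ restricted to the flat containing $y$ and $E$ is an affine function vanishing on $E$, hence identically zero, i.e. $d(y,\pi_E(y))$ equals the distance between the two flats. Then the flat $E'$ through $y$ parallel to $E$ is obtained as $\pi_E^{-1}$-fibre translates, and it is parallel to $E$ in the required sense. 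This characterization immediately gives that $Y$ is convex (the defining condition is preserved under taking points on geodesics between two members, again by the Sandwich Lemma applied inside the relevant Euclidean convex hull) and closed (Euclidean-convexity of finitely-generated convex hulls is a closed condition, as the comparison inequalities are closed).

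Next I would show $p\colon Y\to E$ is well-defined and affine. Well-definedness is just $p=\pi_E|_Y$, the ambient nearest-point projection, which is well-defined on all of $X$. To see it surjects onto $E$ and that its fibres are the "vertical" copies, I would observe that for any flat $E'\parallel E$, the Sandwich Lemma gives $\Conv(E\cup E')\cong E\times[0,d]$ and $\pi_E$ restricted there is the first-coordinate projection; hence $\pi_E$ maps $E'$ isometrically onto $E$. So for each $x\in E$ the fibre $Z=p^{-1}(\{x\})$ meets every flat $E'\parallel E$ in exactly one point. Convexity of $Z$: if $z,z'\in Z$ lie on flats $E',E''$ parallel to $E$, then $\Conv(E'\cup E'')$ is flat (it is contained in $Y$, and one checks it is parallel-flat, e.g. because $d_{E'}$ restricted to $E''$ is convex and bounded, hence — being defined on a Euclidean space — one of the Proposition \ref{relpos} alternatives holds, and the only one compatible with $\bd E''=\bd E'$ having full-sphere boundary and $E',E''$ both projecting isometrically to $E$ is the constant case), so the geodesic $[z,z']$ lies in a flat and its image under $p$ is an affine path in $E$ from $x$ to $x$, therefore constant; thus $[z,z']\subseteq Z$.

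Finally I would assemble the splitting $Y\cong E\times Z$. Define $\Phi\colon E\times Z\to Y$ by sending $(x,z)$ to the unique point of $(p^{-1}(x))\cap E_z$, where $E_z$ is the flat through $z$ parallel to $E$; equivalently, translate $z$ along its flat by the vector $x-x_0$. I would check $\Phi$ is a bijection (injectivity and surjectivity both follow from the fibre analysis above) and an isometry: given $(x,z)$ and $(x',z')$, the convex hull $\Conv(E_z\cup E_{z'})\cong E\times[0,\ell]$ with $\ell=d(z,z')$ — and within this product the squared distance between the two translated points is $|x-x'|^2+\ell^2$ by the Euclidean Pythagoras in the slab, which is exactly the product metric on $E\times Z$. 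The one point requiring care is that $E_z$ and $E_{z'}$ really are parallel (not merely both parallel to $E$): I would argue parallelism is transitive for flats of the same dimension whose convex hull is flat, which is the content sketched in the previous paragraph.

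The main obstacle I expect is precisely this transitivity-of-parallelism step — showing that two flats each parallel to $E$ are parallel to each other, so that their convex hull splits as a slab. This is where one genuinely needs the geometry beyond the Sandwich Lemma: a priori $\Conv(E_z\cup E_{z'})$ need not be flat for an arbitrary CAT(0) space. The clean way around it is to observe that $d_{E_z}$ restricted to the Euclidean space $E_{z'}$ is a convex function whose minimum set contains a flat of full dimension $n=\dim E_{z'}$ (since projecting along the common reference $E$ identifies translates), so in the notation of Proposition \ref{relpos} we are in case (ii) with $F_m=E_{z'}$ and $T$ a point, forcing $d_{E_z}$ constant on $E_{z'}$, i.e. $E_z\parallel E_{z'}$; the Sandwich Lemma then finishes. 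Making this last reduction precise — that the minimum of $d_{E_z}|_{E_{z'}}$ is attained on an $n$-flat — is the crux, and it follows from the first characterization together with the fact that $p$ restricts to an isometry on each parallel flat.
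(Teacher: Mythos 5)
Your route is genuinely different from the paper's: you try to build the splitting $Y\cong E\times Z$ directly from mutual parallelism of the flats together with the Sandwich Lemma, whereas the paper inducts on $\dim E$ and reduces to the line case, where the full product decomposition is already established in \cite[Proposition II.2.14]{MR1744486}. Your observation that any two flats parallel to $E$ are parallel to each other is correct, and the right reason is the one you give parenthetically in your second paragraph: $d_{E'}|_{E''}$ is convex and bounded (bounded because, for $y\in E''$, $d(y,E')\le d(y,\pi_E(y))+d(\pi_E(y),E')=d(E'',E)+d(E,E')$), and a bounded convex function on a Euclidean space is constant. Your later justification of the same point --- that the minimum set of $d_{E_z}|_{E_{z'}}$ ``contains a flat of full dimension $n$'' --- is circular as stated: a full-dimensional flat inside $E_{z'}$ is $E_{z'}$ itself, so this just restates that the function is constant.

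The genuine gap is in the final assembly. Mutual parallelism plus the Sandwich Lemma give, for each pair $E_z,E_{z'}$, a slab $\Conv(E_z\cup E_{z'})\cong E_z\times[0,\ell]$, but your Pythagoras computation $d(\Phi(x,z),\Phi(x',z'))^2=|x-x'|^2+\ell^2$ additionally requires that the projections be compatible, i.e.\ that $\pi_E\circ\pi_{E_z}=\pi_E$ on $E_{z'}$ (equivalently, that $\pi_{E_z}$ maps the fibre $p^{-1}(x)$ into itself, or that $\pi_E$ is affine on the slab). A priori $\pi_E\circ\pi_{E_z}|_{E_{z'}}$ and $\pi_E|_{E_{z'}}$ are two affine isometries $E_{z'}\to E$, and the bounded-convex-function argument only shows that their difference has constant norm, hence that they differ by a fixed translation $w\in E$; nothing in your proposal forces $w=0$. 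The same issue undermines your convexity proof for $Z$: the image under $p$ of the geodesic $[z,z']$ is a path in $E$ with both endpoints at $x$, but it is ``affine, hence constant'' only if $\pi_E$ is affine on the slab, which is exactly the point at issue. Killing $w$ needs an extra asymptotic argument (for a midpoint $m$ of $[z,z']$, compare $|t-t_0|\le d(m,c(t))\le\tfrac12\bigl(\sqrt{a^2+t^2}+\sqrt{b^2+t^2}\bigr)$ as $t\to\pm\infty$ along lines $c$ in $E$ through $x$, forcing $t_0=0$), or else the paper's induction, which outsources precisely this step to the cited one-dimensional result.
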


\begin{proof}The lemma is proved when $E$ is a line in \cite[Proposition II.2.14]{MR1744486}. Let $n$ be the dimension of $E$. We proceed by induction on $n$. Assume this is true for $n-1$ and choose an orthogonal splitting $F\times L$ of $E$ where $F$ has dimension $n-1$, $L$ is a line and $F\cap L=\{x\}$. The induction assumption for $F$ implies the convex hull of subspaces parallel to $F$ splits isometrically as $F\times Z_F$. Now let us apply the case $n=1$ for the union of lines parallel to $L$ in $Z_0$ which splits as  $L\times Z_0\subseteq Z_F$. It is clear that $E\times Z_0\subset Y$. 

If $E'$ is a Euclidean subspace parallel to $E$ then the restriction $p'$ of $p$ to $E'$ is an isometry. Set $F'=p'^{-1}(F)$ and $L'$ to be the orthogonal line to $F'$ containing $x'=p'^{-1}(x)$. For any $y\in L'$, $F\times\{y\}$ is parallel to $F$ and thus $L'\subset Z_F$. Since $L'$ is parallel to $L$, $x'\in Z_0$. Finally $Z_0=Z$ and $Y$ splits isometrically as $E\times Z$.

Assume that  for any $x_1,\dots,x_n\in E$,  $\Conv(y,x_1,\dots,x_n)$ is isometric to a convex subset of a Euclidean space. Then $\Conv(\{y\}\cup E)$, which is the union of such spaces, satisfies the following property: for any $z_1,\dots,z_n\in \Conv(\{y\}\cup E)$, $\Conv(z_1,\dots,z_n)$ is isometric to a convex subset of a Euclidean space. This property is a characterization of CAT(0) spaces that are isometric to a convex subset of a Hilbert space. In particular, in our case $\Conv(\{y\}\cup E)$ is necessarily isometric to $\R^d\times d(y,E)$ where $d=\dim(E)$.
\end{proof} 

\begin{lemma}\label{flatbusemann} Let $E$ be a Euclidean subspace of $X$ and $\xi\in \bd X$ such that the Busemann function $\beta_\xi$ associated to $\xi$ (with respect to some fixed base point) is constant on $E$. If $x\in E$ and $\rho$ is the ray from $x$ to $\xi$ then the convex hull of $E\cup\rho$ is isometric to $E\times [0,+\infty)$.

In particular, with the notations of Lemma \ref{flatproduct}, $\xi\in \bd Z$.
\end{lemma}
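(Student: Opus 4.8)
The plan is to show that the geodesic rays emanating from the points of $E$ towards $\xi$ sweep out a flat product over $E$. Write $c$ for the value of $\beta_\xi$ on $E$, and for $e\in E$ let $\rho_e\colon[0,\infty)\to X$ be the ray issuing from $e$ and pointing at $\xi$; thus $\rho=\rho_x$.

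The first step is orthogonality. Since $E$ is a flat, the full geodesic line through any two points $e\neq e'$ of $E$ lies in $E$, so $\beta_\xi$ is constant along it. By the first variation formula for Busemann functions (an equality in CAT(0) spaces), the derivative of $\beta_\xi$ along that line at $e$ equals $-\cos\angle_e(\xi,e')$; hence $\angle_e(\xi,e')=\pi/2$, i.e.\ every $\rho_e$ leaves $E$ orthogonally. Moreover $\beta_\xi$ is $1$-Lipschitz and decreases at unit speed along $\rho_e$, so for any $w\in E$ one has $d(\rho_e(t),w)\geq|\beta_\xi(\rho_e(t))-\beta_\xi(w)|=t=d(\rho_e(t),e)$; by uniqueness of the nearest-point projection this gives $d(\rho_e(t),E)=t$ and $\pi_E(\rho_e(t))=e$.

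The crux is to upgrade this to a metric flatness statement: for all $e,e'\in E$ and $s,t\geq0$,
\[
 d(\rho_e(s),\rho_{e'}(t))=\sqrt{d(e,e')^2+(s-t)^2}.
\]
Fixing $e,e'$ and $t$, I would apply the Flat Quadrilateral Theorem \cite[Ch.~II.2]{MR1744486} to the geodesic quadrilateral $\bigl(e,e',\rho_{e'}(t),\rho_e(t)\bigr)$: its angles at $e$ and $e'$ equal $\pi/2$ by the previous step, while its angles at $\rho_e(t)$ and $\rho_{e'}(t)$ are each at least $\pi$ minus the corresponding angle of the asymptotic pair $\rho_e,\rho_{e'}$ at that point, and those two latter angles sum to at most $\pi$ because the rays are asymptotic to the common point $\xi$ (the two finite angles of an ideal triangle in a CAT(0) space sum to at most $\pi$). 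Hence the total angle sum is at least $2\pi$, forcing the quadrilateral to bound an isometrically embedded Euclidean rectangle; letting $t$ vary and assembling these rectangles yields the displayed identity. I expect this step, which is essentially a ray version of the Flat Strip Theorem, to be the main obstacle, mostly in pinning down the ideal-triangle angle estimate at the right level of rigour.

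Granting the identity, the map $\Phi\colon E\times[0,\infty)\to X$, $(e,t)\mapsto\rho_e(t)$, is an isometric embedding; its image is a complete, geodesically self-convex subspace of the CAT(0) space $X$, hence convex, it contains $E=\Phi(E\times\{0\})$ and $\rho=\Phi(\{x\}\times[0,\infty))$, and conversely it is contained in $\Conv(E\cup\rho)$ because in the model $E\times[0,\infty)$ the convex hull of $\bigl(E\times\{0\}\bigr)\cup\bigl(\{x\}\times[0,\infty)\bigr)$ is the whole space. Thus $\Conv(E\cup\rho)$ is the image of $\Phi$ and is isometric to $E\times[0,\infty)$, which is the first assertion. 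For the last sentence, each slice $\Phi(E\times\{t\})$ is a flat isometric to $E$, at constant distance $t$ from $E$ and with $E$ at constant distance $t$ from it, hence parallel to $E$; so $\Conv(E\cup\rho)$, and in particular $\rho$, lies in the parallel set $Y$ of Lemma \ref{flatproduct}. Since there $p=\pi_E|_Y$ and, by the first step, $\pi_E(\rho(t))=x$ for every $t$, the ray $\rho$ lies in the closed convex set $Z=p^{-1}(\{x\})$, and therefore $\xi=\rho(\infty)\in\bd Z$.
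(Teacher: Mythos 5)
Your argument is correct and follows essentially the same route as the paper: establish that the rays to $\xi$ leave $E$ orthogonally, deduce that any two such rays bound a flat half-strip, and conclude via Lemma \ref{flatproduct}. The only difference is that where the paper cites \cite[Proposition II.9.3]{MR1744486} for the half-strip (from the angle identity $\angle_x(x',\xi)+\angle_{x'}(x,\xi)=\pi$), you reprove that ingredient by hand with the Flat Quadrilateral Theorem and the ideal-triangle angle bound, which is fine but not a genuinely different approach.
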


\begin{proof} Choose $x\in E$ and $y$ on the geodesic ray from $x$ to $\xi$. We claim that $\overline\angle_x(y,z)=\pi/2$ for any point $z\neq x$ in $E$. Assume $\overline\angle_x(y,z)<\pi/2$. Then there is $x'\in(x,z)$ such that $d(y,x')<d(y,x)$. This implies $\beta_\xi(x')<\beta_\xi(x)$, which is a contradiction. Arguing the same way with the symmetric of $z$ with respect to $x$, we get the claim. In particular, for any $x,x'\in E$, $\angle_x(x',\xi)+\angle_{x'}(x,\xi)=\pi$ and \cite[Proposition II.9.3]{MR1744486} implies that the convex hull of $x,x'$ and $\xi$ is isometric to $[0,d(x,x')]\times[0,\infty)$. 

This shows that the projection of $E$ on any closed horoball is a flat parallel to $E$ and is the lemma is now a consequence of Lemma  \ref{flatproduct}.
\end{proof}

\section{Measurable fields of complete separable metric spaces}\label{measurable}

\subsection{Metric fields}
An important --- although slightly technical --- tool in our proof will be the notion of fields of metric spaces. Roughly speaking a measurable field of metric spaces over a measurable space $A$ is a way to attach measurably a metric space to any point in $A$. Thanks to \cite{MR0486390} one can think to measurable fields of metric spaces over $A$ in the following way: to each point of $A$ one associates a closed subspace of a fixed  metric space, namely the Urysohn space.


Let $(A,\eta)$ be a Lebesgue space, that is a standard Borel space with a Borel probability measure \cite[\S 12]{MR1321597}. All our definitions will only depend on the class of the measure $\eta$.

\begin{definition} Let $\{X_a\}_{a\in A}$ be a collection of complete separable metric spaces. The distance on $X_a$ is denoted $d_a$ or simply $d$ if there is no ambiguity. Measurablity conditions are defined thanks to the notion of fundamental families. A \emph{fundamental family} $\mathcal F=\{x^n\}$ is a countable set of elements of 
$\prod_{a\in A} X_a$ 
with the following properties
\begin{itemize}
\item $\forall n,m$, $a\mapsto d_a(x^n_a,x^m_a)$ is measurable,
\item for almost every $a\in A$, $\{x^n_a\}$ is dense in $X_a$.
\end{itemize}

A \emph{measurable field $\mathbf{X}$ of complete separable metric spaces} ---or simply a \emph{ metric field} for short--- is then the collection of data: $(A,\eta)$, $\{X_a\}_{a\in A}$ and $\{x^n\}$.\\

A \emph{section} of $\X$ is an element $x\in\prod_a X_a$ such that for all $y\in\mathcal{F}$, $a\mapsto d_a(x_a,y_a)$ is measurable. Two sections are identified if they agree almost everywhere. The set of all sections is the \emph{measurable structure} $\mathcal{M}$ of $\X$. If $x,y$ are two sections, the equality
\[d_a(x_a,y_a)=\sup_{z\in\mathcal{F}}|d_a(x_a,z_a)-d_a(z_a,y_a)|\]
shows that $a\mapsto d_a(x_a,y_a)$ is also measurable.
\end{definition}
Let $G$ be a second countable locally compact group.  The Lebesgue space $(A,\eta)$ is a $G$-\emph{space} if $G$ acts by measure class preserving automorphisms on $A$ and  the map $(g,a)\mapsto ga$ is measurable.

\begin{definition} Let  $(A,\eta)$ be a $G$-space. A \emph{cocycle} for $G$ on $\X$ is a collection $\{\alpha(g,a)\}_{g\in G,a\in A}$ such that 
\begin{itemize}
\item for all $g$ and almost every $a$, $\alpha(g,a)\in$ Isom$(X_a,X_{ga})$;
\item for all $g,g'$ and almost every $a$, $\alpha(gg',a)=\alpha(g,g'a)\alpha(g',a)$;
\item for all $x,y\in\mathcal{F}$, the map $(g,a)\mapsto d_a(x_a,\alpha(g,g^{-1}a)y_{g^{-1}a})$ is measurable.\\
\end{itemize}
In that case we say that $G$ \emph{acts} on $\mathbf{X}$ via the cocycle $\alpha$ or that there is an \emph{action} of $G$ on $\mathbf{X}$. A section $x$ is invariant if for all $g$ and almost all $a$, $\alpha(g,g^{-1}a)x_{g^{-1}a}=x_a$.
\end{definition}

\subsection{Fields of CAT(0) spaces}

A special case of metric fields is the case of CAT(0) fields. The theory of measurable fields of complete separable metric spaces and more specifically of CAT(0) spaces appeared in \cite{MR3163023,MR3044451} and references therein.
\begin{definition}Let $\mathbf{X}$ be a metric field and $\kappa\in\R$. We say that $\mathbf{X}$ is a CAT($\kappa$) \emph{field} if for almost every $a$, $X_a$ is a CAT($\kappa$) space.

A \emph{subfield} $\mathbf{Y}$ of a CAT(0) field $\X$ is a collection $\{Y_a\}_{a\in A}$ of  non-empty closed convex subsets such that for every section $x$ of $\X$, the function $a\mapsto d(x_a,Y_a)$ is measurable. We identify subfields $\mathbf{Y}$ and $\mathbf{Y}'$ if $Y_a=Y_a'$ for almost every $a$.

Similarly we speak of \emph{Euclidean fields} and subfields of such fields. If a group $G$ acts on $\mathbf X$, a subfield $\mathbf Y$ is \emph{invariant} if for all $g$ and almost all $a$, $\alpha(g,g^{-1}a)Y_{g^{-1}a}=Y_a$.
\end{definition}

In CAT(1) spaces, subsets of circumradius less than $\pi/2$ are strictly convex and admits a unique circumcenter as well, see \cite[Proposition II.2.7]{MR1744486} or \cite[Proposition 3.1]{MR1456512} for quantitative statements. Those circumcenters can be defined canonically by means of the metric structure and arguing as in \cite[Lemma 8.7]{MR3044451} we obtain the following.

\begin{lemma}\label{cenCAT1}Let $\mathbf{X}$ be a CAT(1) field. If $\mathbf{C}$ is a subfield of $\mathbf{X}$ with fibers of radius less than $\pi/2$ then the family of circumcenters of $\mathbf{C}$ is a section of $\mathbf{X}$.
\end{lemma}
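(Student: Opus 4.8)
The plan is to reduce the statement to a measurable-selection fact and to the known construction of circumcenters in a single CAT(1) space, imitating the argument of \cite[Lemma 8.7]{MR3044451}. First I would fix a fundamental family $\mathcal F = \{x^n\}$ for $\mathbf X$ and recall that, for almost every $a$, the fiber $C_a$ is a non-empty closed convex subset of the CAT(1) space $X_a$ with $\rad(C_a) < \pi/2$; by \cite[Proposition II.2.7]{MR1744486} (or \cite[Proposition 3.1]{MR1456512}) such a set has a \emph{unique} circumcenter $c_a \in X_a$, so the candidate section $c = (c_a)_a$ is at least well-defined fiberwise. The only thing to check is that $c$ is a section of $\mathbf X$, i.e.\ that $a \mapsto d_a(x_a, c_a)$ is measurable for every section $x$ (equivalently, for every element of $\mathcal F$, since the general case then follows from the $\sup$ formula in the definition of the measurable structure).

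The key step is to express $c_a$ as a measurable limit of explicitly measurable quantities. Since $\mathbf C$ is a subfield, the function $a \mapsto d(y_a, C_a)$ is measurable for every section $y$; in particular, for each $n$, $a\mapsto \rad_{\le}(C_a)$-type quantities are accessible. Concretely, for a bounded set $C$ in a CAT(1) space of radius $<\pi/2$ one has the standard ``approximate circumcenter'' description: given a point $z$, the functional $z \mapsto \sup_{y\in C} d(y,z)$ is strictly convex on the ball of radius $<\pi/2$ around $C$, and its unique minimizer is $c$. I would approximate $\sup_{y \in C_a} d(y,z)$ by $\sup_{n \le N} d_a(x^n_a, z)$ restricted to those $n$ with $d_a(x^n_a, C_a)$ small — all of which are measurable in $a$ by the subfield hypothesis and the fundamental-family axioms — and then realize $c_a$ as the limit of the (measurable in $(a,$ finite data$)$) circumcenters of these finite subsets, which lie among finite combinations of the $x^n_a$. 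The strict convexity quantitative bounds of \cite[Proposition 3.1]{MR1456512} guarantee that these approximants converge to $c_a$ uniformly enough that the limit section is again measurable; this is exactly the mechanism used in \cite[Lemma 8.7]{MR3044451}, and I would invoke that lemma's proof scheme rather than redo the estimates.

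The main obstacle I anticipate is purely bookkeeping: making the ``approximate the $\sup$ over $C_a$ by a measurable finite sup over the fundamental family'' step rigorous, since $\mathcal F$ is dense in $X_a$, not in $C_a$, so one must first select from $\mathcal F$ a countable subfamily that is, fiberwise, dense in $C_a$ — this is where the subfield condition ``$a\mapsto d(x_a,C_a)$ measurable'' is used, via a standard measurable-selection / exhaustion argument to produce sections landing in $\mathbf C$. Once such a fundamental family for $\mathbf C$ is in hand, the convergence of approximate circumcenters to the true circumcenter follows from the quantitative strict convexity in CAT(1), and measurability of the limit is automatic. Hence I expect the proof to be a short paragraph citing \cite[Lemma 8.7]{MR3044451} for the template, \cite[Proposition 3.1]{MR1456512} for the quantitative uniqueness, and the definition of subfield for the selection step.
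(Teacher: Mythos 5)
Your proposal is correct and follows essentially the same route as the paper, which gives no detailed proof but simply invokes the unique circumcenter of a set of radius $<\pi/2$ in a CAT(1) space (citing the same \cite[Proposition II.2.7]{MR1744486} and \cite[Proposition 3.1]{MR1456512}) and says to argue as in \cite[Lemma 8.7]{MR3044451}. Your additional bookkeeping about extracting a fiberwise dense countable family in $\mathbf{C}$ and approximating circumcenters measurably is exactly the content of that cited template.
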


\begin{lemma}\label{field balls} Let $\mathbf X$ be a metric field and $x$ be a section of it. For $a\in A$, let $B_a^r$ be the closed ball of radius $r$ around $x_a$ then $\mathbf B^r=(B_a^r)$ is a metric field. Moreover if $G$ acts on $\mathbf X$ and $x$ is an invariant section then $G$ acts on $\mathbf B$ as well.
\end{lemma}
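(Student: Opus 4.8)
The plan is to exhibit a fundamental family for $\mathbf{B}^r$ built out of the fundamental family $\mathcal{F}=\{x^n\}$ of $\mathbf{X}$ by retracting each $x^n$ radially onto the ball around $x_a$, and then to get the $G$-action by restricting the cocycle. Since the lemma is only applied when the fibres are \cat (or $\mathrm{CAT}(1)$), I will use throughout that each $X_a$ is uniquely geodesic. For each $n$, let $c^n_a$ denote the geodesic segment from $x_a$ to $x^n_a$ in $X_a$ and let $y^n_a$ be the point of $c^n_a$ at distance $\min\{r,d_a(x_a,x^n_a)\}$ from $x_a$, so that $y^n_a\in B^r_a$ and $y^n_a=x^n_a$ whenever $d_a(x_a,x^n_a)\le r$. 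The first thing to check is that every $y^n$ is a section of $\mathbf{X}$: for a section $w$, the function $a\mapsto d_a(y^n_a,w_a)$ is measurable because the point lying at a measurably varying prescribed distance along the segment between two sections of a metric field is again a section --- this is the usual measurability of geodesics in a field, obtained by observing that the midpoint of two sections is the (canonically defined) circumcenter of their pair, iterating to get all dyadic points, and passing to limits. Once the $y^n$ are known to be sections, the maps $a\mapsto d_a(y^n_a,y^m_a)$ are automatically measurable.

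Next I would establish density of $\{y^n_a\}$ in $B^r_a$ for almost every $a$. Given $p\in B^r_a$, choose $n_k$ with $x^{n_k}_a\to p$. If $d_a(x_a,p)<r$ then $x^{n_k}_a\in B^r_a$ for large $k$, hence $y^{n_k}_a=x^{n_k}_a\to p$. If $d_a(x_a,p)=r$ then $d_a(y^{n_k}_a,x^{n_k}_a)=\max\{0,\,d_a(x_a,x^{n_k}_a)-r\}\to 0$, so $y^{n_k}_a\to p$ as well. This finishes the proof that $\mathbf{B}^r$ is a metric field. \emph{The one place I expect real friction is precisely this density step}: in a general metric field a closed ball need not be the closure of the dense points it contains --- a point of the metric sphere can be unreachable from inside the ball --- so some convexity input on the fibres is genuinely needed here, and it is exactly this that the \cat hypothesis, available in all our applications, supplies.

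Finally, for the equivariance part, suppose $G$ acts on $\mathbf{X}$ via a cocycle $\alpha$ and $x$ is invariant, i.e.\ $\alpha(g,a)x_a=x_{ga}$ for all $g$ and almost all $a$. Then each $\alpha(g,a)\in\Isom(X_a,X_{ga})$ carries $x_a$ to $x_{ga}$ and therefore restricts to an isometry $B^r_a\to B^r_{ga}$; these restrictions satisfy the cocycle identity because $\alpha$ does. It remains to verify the measurability condition, that $(g,a)\mapsto d_a\big(y^n_a,\alpha(g,g^{-1}a)y^m_{g^{-1}a}\big)$ is measurable; this follows from the corresponding property of $\alpha$ on $\mathbf{X}$, extending it from pairs in $\mathcal{F}$ to arbitrary sections (such as the $y^n$) by the supremum argument already used for the measurability of the metric on sections, together with the cocycle relation and the joint measurability of the $G$-action. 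Hence the restricted cocycle defines an action of $G$ on $\mathbf{B}^r$, which is the remaining assertion.
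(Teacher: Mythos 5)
Your argument is correct where the lemma is actually applied, but it takes a genuinely different route from the paper's. The paper does not retract anything onto the ball: it produces the fundamental family of $\mathbf B^r$ by \emph{subselection}, setting $k^n_a=\min\{k>k^{n-1}_a:\ x^k_a\in\overline B(x_a,r)\}$ and $y^n_a=x^{k^n_a}_a$, i.e.\ it measurably enumerates, fibre by fibre, those points of the original fundamental family that already lie in the closed ball. This is more economical than your radial retraction on two counts: it needs no (unique) geodesics even to define the points, and it does not have to invoke the measurability of points along geodesics between sections (your midpoint/dyadic/limit argument is legitimate, but it is an extra lemma the paper never has to state here). On the other hand, you have correctly isolated the one genuine pressure point, namely density at the sphere $\{d_a(x_a,\cdot)=r\}$; it is worth noting that the paper's subselection has exactly the same dependence, since for an arbitrary complete separable fibre the set $\{x^k_a\}\cap\overline B(x_a,r)$ need not be dense in $\overline B(x_a,r)$ (a point at distance exactly $r$ may be approachable only from outside the ball). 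So neither proof covers literally all metric fields as stated; both are rescued by the fibres being geodesic, in which case the closed ball is the closure of the open ball and the open ball meets the dense family densely. Your handling of the $G$-action (restricting the cocycle using invariance of $x$, with the supremum trick for measurability of $(g,a)\mapsto d_a(y^n_a,\alpha(g,g^{-1}a)y^m_{g^{-1}a})$) is exactly the ``straightforward verification'' the paper declines to write out. In short: same skeleton, but the paper builds the fundamental family by selecting indices rather than by projecting along geodesics, which keeps the construction closer to the stated generality and avoids the auxiliary machinery you need.
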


\begin{proof}This statement is almost a straightforward verification of the definitions. The only non completely obvious fact is maybe the construction of a fundamental family. Fix a fundamental family $x^n$ of $\mathbf X$. For $n\in\N$, $a\in A$, set inductively $k^n_a$ to be $\min\{k>k^{n-1}_a;\ x^k_a\in \overline B(x_a,r)\}$. Let us denote by $y^n_a$ the point $x^{k^n_a}_a$. The family $(y^n)$ is a fundamental family of $\mathbf B^r$.
\end{proof}
 
Building on Lemma \ref{flatproduct} we obtain the following measurable decomposition of the union of flats parallel to a Euclidean subfield.

\begin{lemma}\label{parallel field}Let $\mathbf X$ be a CAT(0) field and $\mathbf E$ be a Euclidean subfield. For $a\in A$ let $Y_a$ be the union of flats parallel to $E_a$. Then $\mathbf Y=(Y_a)$ is a subfield of $\mathbf X$ which splits as a product of CAT(0) fields $\mathbf Y=\mathbf E\times \mathbf Z$. Moreover if $G$ acts on $\mathbf X$ and $\mathbf E$ is invariant then $\mathbf Y$ is invariant and $G$ acts diagonally on  $\mathbf E\times \mathbf Z$ with an invariant section in $\mathbf Z$.
\end{lemma}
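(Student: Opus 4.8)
The plan is to combine the fiberwise decomposition from Lemma \ref{flatproduct} with the general measurable-selection machinery for CAT(0) fields, checking that every fiberwise construction can be made measurable by producing fundamental families. First I would verify that $\mathbf Y=(Y_a)$ is a subfield. Each $Y_a$ is closed and convex by Lemma \ref{flatproduct}, so the only point is measurability of $a\mapsto d(x_a,Y_a)$ for every section $x$. Here I would use the characterization in Lemma \ref{flatproduct}: $y\in Y_a$ iff for every $x_1,\dots,x_n\in E_a$ the convex hull $\Conv(y,x_1,\dots,x_n)$ embeds isometrically in a Euclidean space, equivalently (taking $x_1,\dots,x_n$ to be vertices of a fixed-shape simplex in $E_a$ spanning it) a condition on finitely many pairwise distances. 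Since $\mathbf E$ is a subfield, one can choose a measurable section of ``spanning frames'' in $E_a$ — e.g. pick a measurable section $e^0$ in $\mathbf E$ and, using that $E_a$ is a flat of some dimension, measurable sections $e^1,\dots$ completing an orthonormal-type frame — and then $d(x_a,Y_a)$ is obtained as an infimum over the (countable, from the fundamental family of $\mathbf X$) candidate points $x^k_a$ of a measurable expression; more cleanly, $d(x_a,Y_a)=d(x_a,E_a)$-type formulas from the splitting reduce this to distances already known to be measurable. I would spell this out via a fundamental family for $\mathbf Y$: run through the fundamental family $x^n$ of $\mathbf X$ and keep those indices $k$ for which $x^k_a$ lies in $Y_a$ (a measurable condition, by the distance characterization), exactly as in the proof of Lemma \ref{field balls}; density in $Y_a$ holds because $Y_a$ is the closure of such points.

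Next I would produce the splitting $\mathbf Y=\mathbf E\times\mathbf Z$. Fiberwise, Lemma \ref{flatproduct} gives $Y_a\cong E_a\times Z_a$ with $Z_a=p_a^{-1}(x_a)$ once a basepoint $x_a\in E_a$ is fixed; I take $x_a$ to be a measurable section of $\mathbf E$ (which exists since $\mathbf E$ is a subfield — e.g. the section coming from a fundamental family, or a circumcenter-type section on a ball). Then $p_a\colon Y_a\to E_a$ is the restriction of the nearest-point projection $\pi_{E_a}$, and measurability of $a\mapsto\pi_{E_a}(y_a)$ for sections $y$ of $\mathbf Y$ is standard for subfields (the projection to a subfield is a morphism of fields; this is in the references \cite{MR3163023,MR3044451} invoked in the text, and in any case follows because $\pi_{E_a}(y_a)$ is the unique minimizer of a measurable family of convex functions, detectable on the fundamental family). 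Defining $Z_a=p_a^{-1}(x_a)$, I check $\mathbf Z=(Z_a)$ is a subfield of $\mathbf Y$ (its distance function is $d(y_a,Z_a)$, and by the product splitting $d(y_a,Z_a)=d_{E_a}(p_a(y_a),x_a)$, which is measurable), hence a CAT(0) field; and $\mathbf E$, $\mathbf Z$ are complementary factors, so $\mathbf Y\cong\mathbf E\times\mathbf Z$ as metric fields — the identification $y_a\mapsto(p_a(y_a),q_a(y_a))$ is measurable because both coordinate maps are.

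For the equivariance statement, suppose $G$ acts on $\mathbf X$ via a cocycle $\alpha$ and $\mathbf E$ is invariant, i.e. $\alpha(g,g^{-1}a)E_{g^{-1}a}=E_a$ a.e. Since $\alpha(g,g^{-1}a)$ is an isometry $X_{g^{-1}a}\to X_a$, it carries flats parallel to $E_{g^{-1}a}$ to flats parallel to $E_a$, so $\alpha(g,g^{-1}a)Y_{g^{-1}a}=Y_a$ a.e., i.e. $\mathbf Y$ is invariant. The product decomposition $\mathbf Y=\mathbf E\times\mathbf Z$ is canonical (the $\mathbf E$-factor is spanned by the parallel flats through a point, the $\mathbf Z$-factor is its orthogonal complement in the de Rham sense), so isometries between fibers respecting the $\mathbf E$-parts respect the splitting, and $G$ acts diagonally on $\mathbf E\times\mathbf Z$. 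Finally, the induced $G$-action on the $\mathbf Z$-factor has an invariant section: projecting any section of $\mathbf Z$ to its circumcenter won't literally work since $Z_a$ need not be bounded, but the point is simply that, having fixed a measurable section $x_a\in E_a$, the fiber $Z_a$ was defined as $p_a^{-1}(x_a)$; to get a $G$-invariant section of $\mathbf Z$ one instead observes that the $\mathbf E$-coordinate gives a $G$-equivariant identification of all the $Z_a$ with a single ``transverse'' field and one may take the section corresponding to any fixed basepoint — more carefully, pick a fundamental-family point and flow it by the cocycle; alternatively use that $\mathbf Z$ is the field of transverse fibers of the invariant product $\mathbf Y$, and the diagonal $G$-action on $\mathbf E\times\mathbf Z$ induces an honest $G$-action on $\mathbf Z$, for which the constant section $x\mapsto$ (basepoint of $Z_a$) is invariant once the basepoints are chosen compatibly. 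I expect this last point — producing the \emph{canonical}, hence automatically invariant, basepoint section of $\mathbf Z$ — to be the main subtlety: it requires that the choice of $x_a\in E_a$ be made $G$-equivariantly, which one arranges by noting that an invariant section of $\mathbf E$ exists (pull back a basepoint through $\alpha$, or take a fixed point of the $G$-action on $\mathbf E$ provided by the amenable setting; in the bare lemma one simply records that $Z_a = p_a^{-1}(x_a)$ for \emph{any} section $x$ of $\mathbf E$, and the diagonal action then has the tautological invariant section coming from that choice composed with equivariance of $p$). The rest is routine checking of measurability of the coordinate maps against fundamental families, following Lemmas \ref{field balls} and \ref{cenCAT1}.
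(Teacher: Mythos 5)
Your overall strategy coincides with the paper's (very brief) proof: use the distance characterization of $Y_a$ from Lemma \ref{flatproduct} to handle measurability, then fix a section $x$ of $\mathbf E$ and recover $\mathbf Z$ as $p^{-1}(\{x\})$. However, one step as written would fail. You propose to build a fundamental family for $\mathbf Y$ by keeping those members $x^k_a$ of the fundamental family of $\mathbf X$ that lie in $Y_a$, ``exactly as in the proof of Lemma \ref{field balls}''. The analogy breaks down: a closed ball of positive radius has non-empty interior, so a dense subset of $X_a$ meets it densely, whereas $Y_a$ (e.g.\ when it reduces to the single flat $E_a$) is typically a nowhere dense closed convex subset of $X_a$, and the countable dense family may miss it entirely; the claim that ``$Y_a$ is the closure of such points'' is false. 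What is actually needed is only the subfield condition of the paper's definition, namely measurability of $a\mapsto d(x_a,Y_a)$ for every section $x$ (which is what the characterization in Lemma \ref{flatproduct}, tested against the fundamental family of $\mathbf E$, is for); a fundamental family of $\mathbf Y$ is then obtained by projecting the ambient fundamental family onto $Y_a$ --- nearest-point projection is continuous and restricts to the identity on $Y_a$, so it carries dense sets to dense sets --- not by restriction.

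On the final point, you correctly identify the invariant section of $\mathbf Z$ as the main subtlety but then hedge between a wrong idea and the right one. No invariant section of $\mathbf E$ is needed (and none need exist --- if it did, much of Case (I) of Theorem \ref{mainpairs} would be unnecessary), and the appeal to ``the amenable setting'' is out of place since amenability is not a hypothesis of this lemma. The correct observation, which you do state in your last alternative, is that the section is tautological: under the splitting $Y_a=E_a\times Z_a$, the points of $Z_a$ parametrize the flats parallel to $E_a$ independently of the basepoint section $x$ used to realize $Z_a$ as $p_a^{-1}(\{x_a\})$, and the flat $E_a$ itself is the slice $E_a\times\{z^0_a\}$. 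Since an isometry $\alpha(g,g^{-1}a)$ carries $E_{g^{-1}a}$ to $E_a$ and preserves parallelism, it respects the splitting and sends $z^0_{g^{-1}a}$ to $z^0_a$; so $z^0=(z^0_a)$ is the desired invariant section. With these two repairs your argument is sound and follows the same route as the paper.
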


\begin{proof} Fix $a\in A$. Thanks to Lemma  \ref{flatproduct}, the condition $y\in Y_a$ can checked only using distances $d(y,x^n_a)$ where $(x^n)$ is a fundamental family of the subfield $\mathbf E$. One can then readily check that $\mathbf Y$ is a subfield of $\mathbf X$. Fixing a section $x$ of $\mathbf E$ one can recover $\mathbf Z$ as $p^{-1}\{x\}$ where $p_a$ is the projection on $E_a$. 
\end{proof}


\begin{lemma}\label{dim} Let $\mathbf{E}$ be a Euclidean field. The map $a\mapsto \dim(E_a)$ is measurable.
\end{lemma}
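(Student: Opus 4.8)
The plan is to detect $\dim(E_a)$ by a countable collection of polynomial conditions on the (measurable) fibrewise distances, exploiting that each fibre is Euclidean and that a fundamental family of $\mathbf E$ is fibrewise dense.

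First I would fix a fundamental family $\{x^n\}$ of $\mathbf E$, so that for almost every $a$ the set $\{x^n_a : n\in\N\}$ is dense in $E_a$. In a Euclidean space the closed affine span of a dense subset is the whole space, and a finite-dimensional affine subspace is closed; hence for almost every $a$,
\[
\dim(E_a)=\sup\bigl\{\,k\geq 0 : \exists\, n_0,\dots,n_k\in\N \text{ with } x^{n_0}_a,\dots,x^{n_k}_a \text{ affinely independent}\,\bigr\},
\]
with the convention that the empty supremum is $0$ and an unbounded one is $+\infty$. So it suffices to prove that $A_k=\{a\in A:\dim(E_a)\geq k\}$ is measurable for each $k\geq 1$: then $a\mapsto\dim(E_a)$ is measurable as a map to $\N\cup\{+\infty\}$, since $\{a:\dim(E_a)=n\}=A_n\setminus A_{n+1}$ and $\{a:\dim(E_a)=+\infty\}=\bigcap_k A_k$.

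Next I would turn affine independence into a determinantal identity. For points $p_0,\dots,p_k$ in a Euclidean space, the polarization identity gives, for $1\leq i,j\leq k$,
\[
\langle p_i-p_0,\,p_j-p_0\rangle=\tfrac12\bigl(d(p_0,p_i)^2+d(p_0,p_j)^2-d(p_i,p_j)^2\bigr),
\]
so the Gram matrix $\bigl(\langle p_i-p_0,p_j-p_0\rangle\bigr)_{1\leq i,j\leq k}$ is a fixed polynomial in the pairwise distances, and $p_0,\dots,p_k$ are affinely independent precisely when its determinant is nonzero. Taking $p_i=x^{n_i}_a$ and using that each $a\mapsto d_a(x^{n_i}_a,x^{n_j}_a)$ is measurable by the very definition of a metric field, the map
\[
a\longmapsto \det\bigl(\langle x^{n_i}_a-x^{n_0}_a,\ x^{n_j}_a-x^{n_0}_a\rangle\bigr)_{1\leq i,j\leq k}
\]
is measurable for every fixed tuple $(n_0,\dots,n_k)\in\N^{k+1}$. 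Hence $A_k$ is the countable union, over all such tuples, of the measurable sets on which this determinant does not vanish, so $A_k$ is measurable, which completes the plan. One could equally replace the Gram determinant by the Cayley--Menger determinant of $p_0,\dots,p_k$; both are polynomial in the squared pairwise distances and vanish exactly when the points are not affinely independent.

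I do not expect a genuine obstacle. The one point needing care is the reduction in the second paragraph: that fibrewise, $\dim(E_a)\geq k$ is genuinely witnessed by $k+1$ affinely independent members of the dense family $\{x^n_a\}$ --- for if every $(k+1)$-subtuple were affinely dependent, the whole family, and hence its closure $E_a$, would lie in a fixed $k$-dimensional (thus closed) affine subspace --- together with the elementary bookkeeping turning measurability of $a\mapsto\dim(E_a)$ into measurability of each $A_k$.
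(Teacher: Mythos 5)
Your proof is correct, but it takes a genuinely different route from the paper's. The paper detects $\dim(E_a)$ purely metrically via Jung's inequality: $\sqrt{d/(2(d+1))}$ is recovered as the least constant $K$ such that $\rad(S)\leq K\diam(S)$ for all finite subsets $S$ of the fundamental family, and this quantity is measurable in $a$. You instead detect $\dim(E_a)\geq k$ by exhibiting $k+1$ affinely independent points of the fundamental family, with affine independence expressed as the non-vanishing of a Gram (or Cayley--Menger) determinant, which is a polynomial in the pairwise distances $d_a(x^n_a,x^m_a)$ and hence measurable. Your argument is more elementary and more self-contained: it uses only pairwise distances between members of the fundamental family, whereas the paper's proof implicitly requires computing the circumradius of a finite set inside $E_a$ measurably (itself an $\inf$--$\sup$ over the dense family). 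On the other hand, the Jung-inequality criterion is the one the paper has already set up in \S 2.1 and, unlike the determinantal one, it characterizes geometric dimension for general CAT(0) fibers, not just Euclidean ones; your polarization identity is genuinely specific to flat fibers. Your reduction in the second paragraph (every $(k+1)$-subtuple affinely dependent forces the dense family, hence its closure $E_a$, into a closed affine subspace of dimension at most $k-1$) is sound, and your bookkeeping with the sets $A_k$, including the value $+\infty$, is fine.
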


\begin{proof} Fix a fundamental family $\{x^n\}$ for $\mathbf{E}$. Thanks to Jung's Inequality \eqref{Jung} the dimension $d$ of $E_a$ can be obtained via the quantity $\sqrt{\frac{d}{2(d+1)}}$ which is the minimal non-negative number $K$ such that 
$\rad\left(\{x^{n_1}_a,\dots,x^{n_k}_a\}\right)\leq K\diam\left(\{x^{n_1}_a,\dots,x^{n_k}_a\}\right)$
where $n_1,\dots,n_k\in \N$.
\end{proof}

Any CAT(0) space $X$ has a visual boundary $\bd X$ (which may be empty). Making this construction pointwise, we can consider, at least in a set-theoretic way, the boundary field of a CAT(0) field $\mathbf X$.

However, the measurable structure is not so clear. We would need a separable metric on each fiber.
One way to endow the boundary of a CAT(0) space with a metric is to consider the angle metric, or the Tits metric which is the length metric associated to the previous one. These are invariant metrics but then $\bd X$ is not separable in general --- for example, the boundary of the hyperbolic plane, endowed with the Tits metric, is an uncountable discrete space.

Actually there is no way to construct a boundary field $\bd \mathbf X$ for a CAT(0) field $\mathbf X$ such that a group  acting on $\mathbf X$ also acts (isometrically) on  $\bd \mathbf X$. Otherwise, any Furstenberg map given by Theorem \ref{main} would be constant because of double metric ergodicity.

One way to avoid these problems,  is to let down the desired invariance of the metric. Recall that  a separable CAT(0) space $X$ embeds continuously (not isometrically) to a subset of the Fr\'echet space $\mathcal{C}(X)$ of continuous functions on $X$ endowed with the distance $d(f,g)=\sum_{n\in \N}2^{-n}\frac{|f(x_n)-g(x_n)|}{1+|f(x_n)-g(x_n)|}$ where $\{x_n\}$ is a dense subset of $X$. This metric topology coincides with the topology of pointwise convergence. More precisely, if $x_0$ is a base point in $X$, the embedding is given by $\iota\colon y\mapsto d(\cdot,y)-d(y,x_0)$. The closure of $\iota(X)$ is a compact metric space which allows us to define a \emph{bordification field} $\mathbf K$ for a CAT(0) field $\mathbf X$ (see \cite[\S 9.2]{MR3044451}).
 Sections of $\mathbf K$ corresponds to some collections of Busemann functions. We define them as follows.

\begin{definition}\label{bndfield}Let $\X$ be a CAT(0) field. We define its \emph{boundary field} $\bd\X$ to be the set of sections $f$ of its bordification field $\mathbf K$ such that for a.e. $a\in A$ there is $\xi_a\in \bd X_a$ with $f_a=\beta_{\xi_a}(\cdot,x_0)$ where $\beta_{\xi_a}$ is the Busemann function associated to $\xi_a\in \bd X_a$.
\end{definition} 
By an abuse of notations we will say that $\xi=(\xi_a)$ is a section of the boundary field. Observe in that case
 for all $x,y$ sections of $\X$, the function 
$a\mapsto\beta_{\xi_a}(x_a,y_a)$
is measurable. Even if we will not need it we observe that one can decide if a section of $\mathbf K$ corresponds to a section of the boundary field.

\begin{lemma} Let $f$ be a section of $\mathbf K$. Let $A'$ be the set of elements $a\in A$ such that there is $\xi_a\in \bd X_a$ with $f_a=\beta_{\xi_a}(\cdot,x_0)$. Then $A'$ is a measurable subset of $A$.
\end{lemma}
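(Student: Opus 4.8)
The plan is to exhibit $A'$ as a countable intersection of measurable sets by turning the two defining conditions on a point $f_a \in K_a$ --- namely, ``$f_a$ is a Busemann function'' --- into a collection of inequalities and equalities involving only the fundamental family. Recall that the elements of the bordification field $\mathbf K$ are limits, in the topology of pointwise convergence, of functions $\iota(y_a) = d(\cdot,y_a) - d(y_a,x_0)$ for $y_a \in X_a$; such a limit $f_a$ corresponds to a boundary point $\xi_a \in \bd X_a$ if and only if it is an unbounded limit, equivalently if and only if $f_a$ is the Busemann function of some geodesic ray. So the first step is to recall (or isolate from \cite{MR1744486}) a purely metric characterization of which horofunctions are Busemann functions: a convex $1$-Lipschitz function $h$ on a complete CAT(0) space $X$ that belongs to the horofunction compactification is a Busemann function (i.e.\ arises from a ray) precisely when it is unbounded below, and in that case it is automatically the Busemann function of the unique ray obtained by following the gradient flow from any basepoint. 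In fact, for a complete CAT(0) space every horofunction is already a Busemann function, so $f_a$ fails to be of the desired form exactly when $f_a \in \iota(X_a)$, i.e.\ when $f_a = \iota(y_a)$ for some actual point $y_a \in X_a$.

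Given this, the second step is: $a \notin A'$ iff $f_a$ is the image under $\iota$ of an honest point of $X_a$. Now $\iota(y_a) = \iota(z_a)$ forces $y_a = z_a$, and $f_a = \iota(y_a)$ is equivalent to saying that the function $y \mapsto \sup_{n} \big( f_a(x^n_a) + d(x^n_a, y) \big)$ attains the value $-f_a(x_0) = 0$ — more precisely, $f_a \in \iota(X_a)$ iff $\inf_{n} \big( f_a(x^n_a) + d(x^n_a, x^m_a) - (- f_a(x^m_a)) \big)$-type expressions detect a minimizing point among the dense set, i.e.\ iff $\inf_m \sup_n \big( f_a(x^n_a) - f_a(x^m_a) + d(x^n_a,x^m_a) \big) $... rather, the clean statement is that $f_a = d(\cdot,y_a)-d(y_a,x_0)$ for some $y_a\in X_a$ iff there is a point where the ``$+1$-Lipschitz from the other side'' fails, which by completeness and density of $\{x^n_a\}$ happens iff $\inf_{m} \sup_{n}\big( d(x^m_a,x^n_a) + f_a(x^n_a) \big) = f_a$-values are compatible. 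The key point I would use is simply: $f_a\in\iota(X_a)$ iff $\inf_{x\in X_a}\big(f_a(x)+d(x,x_0)... \big)$, and since $a\mapsto f_a(x^n_a)$ is measurable for each fixed $n$ (because $f$ is a section of $\mathbf K$ and $x^n$ is a section of $\mathbf X$), every countable $\sup/\inf$ over $n,m\in\N$ of such quantities is again measurable in $a$. Hence the set where $f_a$ is (resp.\ is not) a point of $X_a$ is measurable, and $A'$ is its complement.

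The step I expect to be the main obstacle is the second one: pinning down the exact metric/combinatorial criterion, phrased only in terms of the countable data $\{x^n_a\}$ and the scalars $\{f_a(x^n_a)\}_{n}$, that decides whether a given horofunction $f_a$ is ``at infinity'' rather than represented by an interior point. One has to be careful because $X_a$ is only separable, not proper, so one cannot argue by a compactness/limit-point argument on $X_a$ itself; instead the criterion must be that $\inf$ over the dense set $\{x^m_a\}$ of an appropriate nonnegative quantity — measuring how far $f_a$ is from being minimized at $x^m_a$ — equals zero. Concretely I would show $f_a\in\iota(X_a)$ iff $\inf_{m\in\N}\Big(\sup_{n\in\N}\big(f_a(x^n_a)-f_a(x^m_a)+d(x^m_a,x^n_a)\big)\Big)\le 0$, using that $\iota$ is an isometry onto its image for the sup-type metric and that the dense set detects infima. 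Once that equivalence is established, measurability of $A'$ is immediate from the observations above, since all the building blocks $a\mapsto f_a(x^n_a)$ and $a\mapsto d_a(x^m_a,x^n_a)$ are measurable and only countably many of them are combined.
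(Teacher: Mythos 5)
There is a genuine gap, and it sits exactly at the step you yourself flagged as delicate. Your entire strategy rests on the claim that for a complete CAT(0) space every element of the bordification $K_a$ which is not of the form $\iota(y)$ is automatically a Busemann function, so that $A'$ would simply be the complement of $\{a\in A : f_a\in\iota(X_a)\}$. That dichotomy is correct for \emph{proper} spaces, where pointwise convergence on $\overline{\iota(X)}$ coincides with uniform convergence on bounded sets and the identification of the horofunction compactification with $X\cup\partial X$ applies. It fails for the non-locally-compact spaces this paper is concerned with: the bordification field $\mathbf K$ is by definition the closure of $\iota(X_a)$ for \emph{pointwise} convergence, and for non-proper $X_a$ that closure contains functions which are neither points nor Busemann functions. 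Concretely, in a separable Hilbert space with orthonormal sequence $(e_n)$ one has $\iota\bigl(n(e_1+e_n)\bigr)\to -\tfrac{1}{\sqrt 2}\langle\,\cdot\,,e_1\rangle$ pointwise; the limit is unbounded below, hence not $\iota(y)$ for any $y$, but it is only $\tfrac{1}{\sqrt 2}$-Lipschitz and so decreases at speed strictly less than $1$ along every ray, hence is not a Busemann function either. Your criterion would therefore place such ``slow'' horofunctions in $A'$, i.e.\ the set you identify strictly contains the set the lemma asks for.

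The paper's proof avoids this by testing the correct characterization directly: $f_a$ is a Busemann function if and only if it is a limit of $\iota(y_k)$ \emph{uniformly on bounded subsets} with the $y_k$ leaving every bounded set. This is encoded by requiring, for every $r>0$, that the infimum over those $n$ with $d(x^n_a,x^0_a)>r$ of $\sup_m|f_a(x^m_a(r))-\iota(x^n_a)(x^m_a(r))|$ be zero, where $x^m(r)$ is a fundamental family of the ball subfield $\mathbf B^r$; measurability then follows because only countably many measurable quantities are combined. Your last paragraph correctly isolates that one must phrase everything through the countable data $\{x^n_a\}$, and that part of your plan (countable $\inf/\sup$ of measurable building blocks) is sound; but the object being detected must be ``uniform-on-balls limit of points going to infinity,'' not ``not in $\iota(X_a)$.'' A secondary problem is that even the criterion for $f_a\in\iota(X_a)$ is left unresolved: you write down several mutually non-equivalent candidate formulas without verifying any of them, so the argument is incomplete on its own terms as well.
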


\begin{proof}We use the fact that $f_a$ coincides with a Busemann function if it is a limit of points in $\iota(X_a)$ for the topology of uniform convergence on bounded subsets \cite[\S II.8]{MR1744486}. Fix $x^n$ a fundamental family of $\mathbf X$ and $x^n(r)$ fundamental families of the fields $\mathbf B^r$ of closed balls around $x^0$. Now, $a\in A'$ if and only for any  $r>0$ 
$$\inf_{\{n\in\N;\ d(x^n_a,x^0_a)>r\}}\sup_{m\in\N}|f_a(x^m_a(r))-\iota(x^n_a)(x^m_a(r))|=0.$$
\end{proof}

The following lemma shows that our notion of boundary field gives something natural in case of a constant field.
\begin{lemma}Let $(X,d)$ be a complete separable metric space and let us denote by $\mathbf{X}$ the measurable field over $(A,\eta)$ with constant fibers equal to $X$. Sections  of $\mathbf{X}$ and measurable maps $A\to X$ are in bijective correspondence. Moreover if $X$ is CAT(0) space then sections of $\bd\mathbf{X}$ and measurable maps $A\to\bd X$ are in bijective correspondence. 

\end{lemma}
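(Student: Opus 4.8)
The plan is to reduce both assertions to one elementary fact about separable metric spaces: if $(Z,d)$ is separable metric with dense sequence $\{z^n\}$, then the Borel $\sigma$-algebra of $Z$ is generated by the balls $B(z^n,q)$, $n\in\N$, $q\in\Q_{>0}$, since every open set is a countable union of such balls. Concretely, for the first assertion I would fix as fundamental family of the constant field $\mathbf X$ a constant dense sequence $x^n_a:=x^n$ in $X$ (the maps $a\mapsto d(x^n_a,x^m_a)=d(x^n,x^m)$ being constant, hence measurable, and $\{x^n_a\}$ dense in every fibre); note that the resulting notion of section does not depend on this choice. A section of $\mathbf X$ is then a map $x\colon A\to X$ with $a\mapsto d(x_a,x^n)$ measurable for all $n$. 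If $x$ is Borel this holds, as a composition with the continuous function $d(\cdot,x^n)$; conversely, if $x$ is a section then $x^{-1}\big(B(x^n,q)\big)=\{a:d(x_a,x^n)<q\}$ is measurable, so by the fact above $x$ pulls back every open set to a measurable set and is Borel. Since sections and measurable maps are both considered up to equality almost everywhere, this is the first bijection.

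For the second assertion, the bordification field $\mathbf K$ of the constant CAT(0) field $\mathbf X$ is the constant field with fibre the compact metric space $\overline{\iota(X)}\subseteq\mathcal C(X)$, where $\iota(y)=d(\cdot,y)-d(y,x_0)$. I would equip $\bd X$ with the Borel structure transported along the map $j\colon\bd X\to\overline{\iota(X)}$, $\xi\mapsto\beta_\xi(\cdot,x_0)$, which is injective because distinct boundary points have distinct Busemann functions relative to a fixed base point \cite[\S II.8]{MR1744486}; this is the natural separable measurable structure available on $\bd X$, the angle and Tits topologies being non-separable, and it is the one implicit in the rest of the paper. By the first assertion applied to $\mathbf K$, sections of $\mathbf K$ are exactly the Borel maps $f\colon A\to\overline{\iota(X)}$. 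By Definition~\ref{bndfield}, such an $f$ is a section of $\bd\mathbf X$ if and only if $f_a\in j(\bd X)$ for almost every $a$; for such $f$, injectivity of $j$ produces an almost-everywhere-defined map $\xi_a:=j^{-1}(f_a)$, and, after modifying $f$ on the exceptional null set (using $\bd X\neq\emptyset$; if $\bd X=\emptyset$ both sides are empty), a Borel map $\xi\colon A\to\bd X$. Conversely, given a Borel $\xi\colon A\to\bd X$, the map $j\circ\xi$ is Borel, hence a section of $\mathbf K$, with values that are Busemann functions, hence a section of $\bd\mathbf X$. These two operations are mutually inverse modulo null sets, which gives the second bijection.

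Most of this is bookkeeping; the only point that genuinely needs care is fixing the measurable structure on $\bd X$ and checking that the almost-everywhere modification above is legitimate — but the latter requires only that $j(\bd X)$ be nonempty and that the exceptional set be a measurable null set, so no descriptive set theory is needed. If one wishes to record in addition that $j(\bd X)$ is a Borel subset of $\overline{\iota(X)}$ (convenient elsewhere), this follows from the Lusin--Souslin theorem: $\iota$ is a continuous injection of the Polish space $X$, so $\iota(X)$ is Borel in $\overline{\iota(X)}$ and $j(\bd X)\cong\overline{\iota(X)}\setminus\iota(X)$ is Borel as well.
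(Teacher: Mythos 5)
Your argument is correct and follows essentially the same route as the paper: both parts reduce to the observation that measurability of a map into a separable metric space is detected by the distance functions to a countable dense subset, applied first to $X$ itself and then, after identifying $\bd X$ with the set of Busemann functions normalized at a base point, to the (constant) bordification field. You merely spell out details the paper leaves implicit, such as the fact that the balls around a dense sequence generate the Borel $\sigma$-algebra and the precise choice of measurable structure on $\bd X$.
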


\begin{proof}Recall that a fundamental family of $\mathbf{X}$ is given by constant maps $a\mapsto x_n$ where $(x_n)$ is a countable dense subset of $X$. Now a map $f\colon A\to X$ is measurable if and only if for any $x$, $a\mapsto d(x,f(a))$ is measurable if and only if for any $n$, $a\mapsto d(x_n,f(a))$ is measurable.

Now consider the case where $X$ is a CAT(0) space. We use the identification of the boundary of $X$ with the set of Busemann functions vanishing at some base point $x_0$. In this identification, the cone topology corresponds to the topology of uniform convergence on closed balls around $x_0$. In particular, a map $f\colon A\to\bd X$ is measurable if and only if for any $x,y\in X$, $a\mapsto\beta_{f(a)}(x_0,y)$ is measurable  if and only if for any $n,m\in \N$, $a\mapsto\beta_{f(a)}(x_n,x_m)$ is measurable.
\end{proof}

Among CAT(0) spaces, Euclidean spaces have  a special feature: the angle between two points at infinity is the same from any point you look at them. This gives a distance at infinity independent of the choice of a base point and this distance is invariant under the action of the isometry group of the Euclidean space. In case of a field of Euclidean spaces we get the following fact.

\begin{lemma}\label{boueuc}Let $\mathbf{E}$ be a measurable field of Euclidean spaces. The boundary field $\bd\mathbf{E}$ has a structure of a field of CAT(1) spaces for the Tits metric. If $G$ acts isometrically on $\mathbf{E}$ then it acts also isometrically on $\bd\mathbf{E}$.
\end{lemma}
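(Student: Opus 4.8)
The plan is to build the CAT(1) field structure on $\bd\mathbf E$ by exhibiting an explicit fundamental family and checking the measurability of the Tits distance against it, then verifying the $G$-equivariance is automatic from base-point independence. First I would fix a fundamental family $\{x^n\}$ of the Euclidean field $\mathbf E$ and, without loss of generality (translating by a section), assume $x^0$ is a chosen base point section. For each pair $n, m$ with $n \neq m$ the unit-speed geodesic ray $\rho^{n,m}_a$ issuing from $x^0_a$ in the direction of $x^n_a$ (when $x^n_a \neq x^0_a$; otherwise default to some fixed fallback) defines a point $\xi^{n,m}_a \in \bd E_a$, and the associated Busemann function $\beta_{\xi^{n,m}_a}(\cdot, x^0_a)$ is a section of the bordification field $\mathbf K$, hence a section of $\bd\mathbf E$ in the sense of Definition \ref{bndfield}. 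I would check that the countable collection of these $\xi^{n,m}$ (ranging over all $n,m$, and over rational combinations to get a dense set of directions) is dense in each fiber $\bd E_a$ for the Tits metric: in a Euclidean space of any dimension, the directions of geodesics between points of a dense subset are dense in the unit sphere, which is exactly $(\bd E_a, \mathrm{Tits})$.

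Next I would establish the key measurability statement: for any two sections $\xi = (\xi_a)$, $\zeta = (\zeta_a)$ of $\bd\mathbf E$ the map $a \mapsto \angle_{\mathrm{Tits}}(\xi_a, \zeta_a)$ is measurable. The point here is the base-point independence of the angle in a Euclidean space: for any $x \in E_a$ one has $\angle_x(\xi_a, \zeta_a) = \angle_{\mathrm{Tits}}(\xi_a, \zeta_a)$, and by the law of cosines applied along the rays from $x$ one can express this angle through Busemann functions. Concretely, picking the base-point section $x^0$, the comparison angle computation in $E_a$ gives, for $t > 0$, points $y_t = \rho_{\xi_a}(t)$ and $z_t = \rho_{\zeta_a}(t)$ on the respective rays, and $\cos\angle_{x^0_a}(\xi_a, \zeta_a) = \lim_{t\to\infty}\frac{2t^2 - d(y_t, z_t)^2}{2t^2}$; since $t \mapsto d(y_t, z_t)$ and the positions $y_t, z_t$ are determined by Busemann data which is measurable in $a$ by the observation following Definition \ref{bndfield}, the limit (in fact eventually constant in $t$ up to scaling in the Euclidean case) is measurable in $a$. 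This gives a metric on each fiber, it is CAT(1) because the unit sphere of a Hilbert space with angular metric is CAT(1) (and the fibers $\bd E_a$ are exactly such spheres, possibly of varying dimension by Lemma \ref{dim}), and the fundamental family above witnesses separability and the measurability axioms.

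For the equivariance, suppose $G$ acts on $\mathbf E$ via a cocycle $\alpha$ with $\alpha(g,a) \in \mathrm{Isom}(E_a, E_{ga})$. Each $\alpha(g,a)$, being a Euclidean isometry, induces a boundary map $\bd E_a \to \bd E_{ga}$ which is an isometry for the Tits (= angular) metric, precisely because the angular metric on a Euclidean boundary is intrinsic and base-point free; this defines a cocycle on $\bd\mathbf E$. The required measurability of $(g,a) \mapsto d(\xi_a, \alpha(g,g^{-1}a)\zeta_{g^{-1}a})$ for fundamental sections follows from the same angle-via-Busemann formula combined with the third cocycle axiom for $\alpha$ on $\mathbf E$ applied to the relevant sections.

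The main obstacle I anticipate is bookkeeping the base-point dependence cleanly: Busemann functions in Definition \ref{bndfield} are normalized at a base point $x_0$, so one must be careful that changing the reference section only shifts the Busemann sections by a measurable additive term and does not affect the angle, and that the formula expressing the Tits angle through Busemann values is genuinely measurable in $a$ — this is where one uses that in the Euclidean setting the comparison-angle expression stabilizes and equals the true angle exactly, sidestepping any delicate limiting argument. A secondary point needing care is that $\dim E_a$ may vary with $a$, so the fibers $\bd E_a$ are spheres of different (possibly infinite) dimensions; the fundamental family must be chosen so that for a.e.\ $a$ its restriction is dense in whatever sphere occurs, which the "all rational directions between all pairs $x^n_a$" construction handles uniformly.
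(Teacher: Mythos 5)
Your proposal is correct and follows essentially the same route as the paper: take as fundamental family the endpoints of the rays from a base section $x^0$ through the other $x^n$ (dense in each Tits sphere $\bd E_a$), and obtain measurability of Tits angles as limits of comparison angles at $x^0_a$ between points chosen far out along those rays, with equivariance coming for free from base-point independence of the angle. The only point where the paper is more explicit is the measurable selection of those far-out approximating points from the fundamental family itself (the condition $d(x^0_a,x^m_a)>k$ together with $\vert d(x^0_a,x^n_a)+d(x^n_a,x^m_a)-d(x^0_a,x^m_a)\vert<1$), which is the precise version of your appeal to the positions $y_t,z_t$ being ``determined by Busemann data''.
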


\begin{proof}Each $\bd E_a$ is a complete separable metric space (isometric to a Euclidean sphere) for the Tits metric, and the action of $G$ preserves this distance. What is left to do is to check the measurable structure.

 We choose a fundamental family $(x^n)$ of $\mathbf{E}$ such that there is a section $x^0$ such that for any $n$ and almost all $a$, $d(x^0_a,x^n_a)\neq0$. Now define $\xi^n_a\in\bd E_a$ to be the end point of $[x^0_a,x^n_a)$. We claim that $(\xi^n)$ is a fundamental family for $\bd\mathbf{E}$. We define
\[m(a,n,k)=\min\left\{m;\ d(x^0_a,x^m_a)>k,\ \vert d(x_a^0,x^n_a)+d(x_a^n,x_a^m)-d(x^0_a,x^m_a)\vert<1\right\}\]
and 
\[y^{(n,k)}_a=x_a^{m(a,n,k)}.\]
This way, for any $(n,k)$, $y^{(n,k)}$ is  a section and for almost every $a$, $y^{(n,k)}_a\to\xi^n_a$ as $k\to\infty$.  This shows that for any $n,m$, $a\mapsto\angle(\xi^n_a,\xi^m_a)=\lim_{k\to\infty}\angle_{x_a^0}(y^{(n,k)}_a,y^{(m,k)}_a)$ is measurable. 
\end{proof}

\begin{remark}Let $E$ be a Euclidean space and $\xi,\eta$ be two points at infinity. The very special geometry of $E$ implies the following formula between Busemann functions $\beta_\xi,\beta_\eta$ and the visual angle $\angle_{x_0}(\xi,\eta)$ (which does not depend on $x_0$ and is also the Tits angle) :
\[\sup_{1/2<d(x,x_0)<1}\frac{|\beta_\xi(x,x_0)-\beta_\eta(x,x_0)|}{d(x,x_0)}=2\left(1-\cos(\angle(\xi,\eta)\right).\]
This formula shows that in the case of a measurable field of Euclidean spaces $\mathbf{E}$, the notion of section of $\bd \bf E$ defined in Definition \ref{bndfield} and the notion of section for the structure of a metric field introduced in Lemma \ref{boueuc} coincide.
\end{remark}

Let $E$ be a Euclidean space of dimension $d_0$. It is not hard to define a distance on the set $S$ of  subspheres of dimension $0\leq d<d_0$ in $\bd E$ turning $S$ into a complete separable metric space. The following lemma does the same in a measurable context.

\begin{lemma}\label{subeuc}Let $\mathbf{E}$ be a Euclidean field of constant dimension $d_0$. Let $d$ be a positive integer less than $d_0$. For any $a\in A$, let $S_a$ be the set of subspheres of dimension $d$ of $\bd E_a$. The collection $\mathbf{S}=(S_a)$ has a structure of a metric field. If $G$ acts on $\mathbf{E}$ then it acts also on $\mathbf{S}$ (isometrically).

Let $E^{s}_a$ be the set of Euclidean subspaces $F$ of $E_a$ such that $\bd F=s_a$. Then $\mathbf{E}^s=(E^s_a)$ has a natural structure of Euclidean field such that any section of $\mathbf{E}^s$ corresponds to a Euclidean subfield of $\mathbf{E}$. 

Moreover if $G$ acts on $\mathbf{E}$  and $s$ is an invariant section of $\mathbf{S}$ then  $G$ acts on $\mathbf{E}^s$.

\end{lemma}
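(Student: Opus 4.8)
The plan is to reduce everything to the measurable structure of $\bd\mathbf E$ furnished by Lemma~\ref{boueuc}, to elementary linear algebra inside the Euclidean fibres, and to the splitting Lemma~\ref{parallel field}. For the field $\mathbf S$, I would metrize each $S_a$ by the Hausdorff distance $d^H_a$ attached to the Tits (angular) metric on $\bd E_a$: the set of $d$-dimensional subspheres of the round sphere $\bd E_a$ is a Grassmannian, so $(S_a,d^H_a)$ is compact, hence complete and separable, and since the cocycle of Lemma~\ref{boueuc} acts by Tits-isometries on the fibres $\bd E_a$ it automatically induces a fibrewise isometric action on $\mathbf S$. The content is the fundamental family. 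Fix a section $x^0$ of $\mathbf E$ and the fundamental family $(\xi^n)$ of $\bd\mathbf E$ of Lemma~\ref{boueuc}, with $v^n_a\in E_a$ the unit vector at $x^0_a$ pointing to $\xi^n_a$. To $\bar n=(n_0,\dots,n_d)\in\N^{d+1}$ I associate the subsphere $\sigma^{\bar n}_a$ spanned by $\xi^{n_0}_a,\dots,\xi^{n_d}_a$ --- the unit sphere of $\mathrm{span}(v^{n_0}_a,\dots,v^{n_d}_a)$ --- whenever these vectors are independent, and otherwise the subsphere attached to the lexicographically least index-tuple in general position (which exists for a.e.\ $a$ since $(\xi^n_a)$ is dense in $\bd E_a$ and $\dim E_a=d_0>d$). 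Density of $(\sigma^{\bar n}_a)$ in $(S_a,d^H_a)$ for a.e.\ $a$ is then clear, general position being open and ``span'' being continuous into the Grassmannian. For measurability one writes $d^H_a(\sigma,\sigma')=\sup_{l}|d^{\mathrm{Tits}}(\xi^l_a,\sigma)-d^{\mathrm{Tits}}(\xi^l_a,\sigma')|$ and notes that $d^{\mathrm{Tits}}(\xi^l_a,\sigma^{\bar n}_a)=\arccos\bigl\|\proj_{\mathrm{span}(v^{n_0}_a,\dots,v^{n_d}_a)}v^l_a\bigr\|$ is a fixed rational function of the Gram matrix of $v^l_a,v^{n_0}_a,\dots,v^{n_d}_a$, whose entries $\langle v^i_a,v^j_a\rangle=\cos\angle(\xi^i_a,\xi^j_a)$ are measurable by Lemma~\ref{boueuc}; the cocycle measurability condition for the $G$-action follows by the same reduction, transporting the cocycle of $\bd\mathbf E$ across an isometry.

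For the field $\mathbf E^s$, the geometric point is that any two members of $E^s_a$ have the same direction subspace $V_a\subseteq E_a$ (of dimension $d+1$), hence are mutually parallel, so $E^s_a$ is naturally the Euclidean space $E_a/V_a$, with metric the constant value of the distance between parallel flats. To realize this measurably, first observe that $a\mapsto d^{\mathrm{Tits}}(\zeta_a,s_a)$ is measurable for every section $\zeta$ of $\bd\mathbf E$: indeed $|d^{\mathrm{Tits}}(\zeta_a,\sigma)-d^{\mathrm{Tits}}(\zeta_a,s_a)|\le d^H_a(\sigma,s_a)$ together with density of $(\sigma^{\bar n}_a)$ gives $d^{\mathrm{Tits}}(\zeta_a,s_a)=\inf_{\bar n}\bigl(d^{\mathrm{Tits}}(\zeta_a,\sigma^{\bar n}_a)+d^H_a(\sigma^{\bar n}_a,s_a)\bigr)$. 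Now fix a section $x^0$ of $\mathbf E$ and let $F^0_a$ be the unique member of $E^s_a$ through $x^0_a$; then $\mathbf F^0=(F^0_a)$ is a Euclidean subfield of $\mathbf E$, because for a section $y$ of $\mathbf E$ Euclidean trigonometry gives $d(y_a,F^0_a)=d(x^0_a,y_a)\,\sin d^{\mathrm{Tits}}(\eta_a,s_a)$, where $\eta_a$ is the endpoint of $[x^0_a,y_a)$, a section of $\bd\mathbf E$, so the right-hand side is measurable. Applying Lemma~\ref{parallel field} to the Euclidean subfield $\mathbf F^0$ of $\mathbf E$: in the Euclidean fibre $E_a$ every translate of $F^0_a$ is parallel to it and these translates cover $E_a$, so $Y_a=E_a$ and $\mathbf E=\mathbf F^0\times\mathbf Z$ with $\mathbf Z$ Euclidean; moreover these translates are precisely the members of $E^s_a$, so $z\mapsto F^0_a+(z-x^0_a)$ is a fibrewise isometry $\mathbf Z\xrightarrow{\sim}\mathbf E^s$. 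This endows $\mathbf E^s$ with a Euclidean-field structure, and a section of $\mathbf E^s$ is a section $z$ of $\mathbf Z$, which corresponds to the Euclidean subfield $\bigl(F^0_a+(z_a-x^0_a)\bigr)_a$ of $\mathbf E$ --- a fibre of the product, hence a subfield.

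For equivariance: if $s$ is invariant, the cocycle of $\mathbf E$ sends the family of flats with boundary $s_{g^{-1}a}$ isometrically onto the family of flats with boundary $s_a$ (an isometry carries a flat with boundary $s_{g^{-1}a}$ to one with boundary $s_a$, and preserves distances between parallel flats), hence restricts to a cocycle on $\mathbf E^s$; its measurability condition reduces, as in the first paragraph, to the measurability statements already proved for $\mathbf E$ --- using the identification of sections of $\mathbf E^s$ with Euclidean subfields of $\mathbf E$ --- and for $\bd\mathbf E$. I expect the genuinely delicate step to be the fundamental family for $\mathbf S$: it is there that the purely geometric ``span'' operation must be rendered measurable, and the Gram-matrix description above (together with the density argument) is the crux; everything else is automatic ($G$-invariance of the Hausdorff metric), an invocation of Lemma~\ref{parallel field}, or bookkeeping against the measurable structure of $\bd\mathbf E$.
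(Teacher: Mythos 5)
Your proof is correct and follows essentially the same route as the paper: the Hausdorff distance attached to the Tits metric on $\bd E_a$, a fundamental family of subspheres spanned by generic tuples coming from a fundamental family in general position, and the identification of $E^s_a$ with the space of flats parallel to a fixed one. The only cosmetic difference is that you obtain the Euclidean structure on $\mathbf{E}^s$ by invoking Lemma \ref{parallel field} for the single subfield $\mathbf F^0$ through a base section, whereas the paper directly names the flats $F^n_a$ through each fundamental-family point; both amount to the same parallelism argument.
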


\begin{proof}First, we claim that one can construct a fundamental family $(x^n)$ of $\mathbf{E}$ such that for any choice $n_0,\dots,n_d$, and almost all $a$, $x^{n_0}_a,\dots,x^{n_{d}}_a$ is not included in a Euclidean subspace of dimension $<d$ since this condition can be checked only with distances.

For all $a$, we define $S_a$ to be the set of  subspheres of $\bd E_a$ of dimension $d$.  For $s^1,s^2\in S_a$, we define $d(s^1,s^2)$ to be the Hausdorff distance between two compact subspaces associated to the Tits distance on $\bd E_a$.
 Now, for a choice of $N=\{n_0,\dots,n_{d}\}$ we define $s^N_a$ to be the boundary of the affine span of $x_a^{n_0},\dots,x_a^{n_{d_0}}$.
  There are countably many possibilities for $N$ and $\{(s^N)\}_N$ defines a fundamental family of $\mathbf{S}$. If $G$ acts on $\mathbf{E}$ then it acts on $\bd\mathbf{E}$ and since the Hausdorff distance is defined via supremum of some Tits angles, $G$ acts (measurably) on $\mathbf{S}$.

Let $s$ be a section of $\mathbf{S}$ and for any $x^n$ element of the fundamental family of $\mathbf E$ let $F^n_a$ be the unique Euclidean subspace of $E_a$ containing $x^n_a$ such that $\bd F^n_a=s_a$.

The last statement comes from the fact that in that case for any $g\in G$ and all almost $a\in A$, $\alpha(g,a)E^s_a=E^{\alpha(g,a)s_a}_{ga}=E_{ga}^{s_{ga}}$.
\end{proof}

\section{Metric ergodicity and its relative version} \label{sec:metricErg}

In \cite{MR2929597} U. Bader and A. Furman introduced the notion of a boundary pair, which is further developed in their yet unpublished paper \cite{Bader:2025}. In this section we review this theory.

We fix a locally compact second countable group $G$ for the rest of this section. All conditions of measurability in $G$ will be relative to the Haar measure class.

\begin{definition}[{\cite[Definition 4.1]{Bader:2013zr}}] Let $(A,\eta)$ be a $G$-space. The action $G\action (A,\eta)$ is \emph{metrically ergodic} if for any action of $G$ by isometries on a complete separable metric space $(X,d)$, any $G$-equivariant measurable map from $A\to X$ is essentially constant.

If the diagonal action $G\action A\times A$ is metrically ergodic, we say that $G\action A$ is \emph{doubly metrically ergodic}.
\end{definition}

\begin{remark} We will only use complete separable metric spaces. However, this is not an important restriction.  About completeness, one may consider the extended action on the completion $\overline X$ and observe that $\overline X\setminus X$ has zero measure. Moreover one may reduce to separable spaces as the following argument shows. 

Let $f$ be a  measurable map $A\to X$. Thanks to \cite{MR612620}, $f$ is actually $\eta$-measurable in N~. Bourbaki's sense and thus separably valued (see \cite[IV \S 5 No 5, Theorem 4]{MR2018901}), meaning that  there is $X'\subseteq X$ closed and separable such that $f_*\eta(X\setminus X')=0$.
 Now, using separability of $X'$, it can be easily checked that the set $S=\{x\in X \mid \forall \varepsilon>0,\ f_*\eta(B(x,\varepsilon))>0\}$  is a closed subset of $X'$ which is $G$-invariant and satisfies $f_*\eta(X\setminus S)=0$.
\end{remark}

Below we present a \emph{relative} notion of metric ergodicity as well \cite{MR2929597}. The definition that we give here is not exactly the one given in the paper \cite{MR2929597}, but a version of it modified in order to fit in the context of measurable fields of complete separable metric spaces


\begin{definition}
Let $(A,\eta)$ and $(B,\nu)$ be two Lebesgue spaces. A measurable map $\pi\colon A\to B$ is a \emph{factor map} if $\pi_*\eta$ and $\nu$ are in the same measure class. If  $A$ and $B$ are $G$-spaces and $\pi$ is $G$-equivariant then we say that $\pi$ is a $G$-\emph{factor}.
\end{definition}

\begin{definition}[{\cite{Bader:2025}}]Let $(A,\eta)$ and $(B,\nu)$ be two Lebesgue spaces and $\pi\colon A\to B$ be a factor map. Let $\mathbf{X}$ be a metric field over $(B,\nu)$. A \emph{relative section} is a map $\varphi\colon A\to \sqcup_{b\in B} X_b$ such that :
 \begin{itemize}
\item for  all $a\in A$, $\varphi(a)\in X_{\pi(a)}$,
\item for any section $x$ of $\mathbf{X}$, $a\mapsto d(x(\pi(a)),\varphi(a))$ is measurable.
\end{itemize}
If $\pi$ is a $G$-factor and $G$ acts on $\mathbf{X}$ via a cocycle $\alpha$, such a relative section is said to be \emph{invariant} if for almost every $a$ and all $g\in G$, $\varphi(ga)=\alpha(g,\pi(a))\varphi(a)$. 
\end{definition}

\begin{definition}[{\cite{Bader:2025}}]\label{simsit}We say that the $G$-map $\pi\colon A\to B$ is \emph{relatively metrically ergodic} (or equivalently $G\action A$ is \emph{metrically ergodic relatively} to $\pi$) if any invariant relative section coincides with a section. In other words, for any $G$-metric field $\mathbf{X}$ and any invariant relative section $\varphi$, there is an invariant section $x$ of $\mathbf{X}$ such that for almost all $a\in A$, $\varphi(a)=x(\pi(a))$.
\end{definition}

The following lemma shows how relative metric ergodicity implies metric ergodicity. Actually metric ergodicity of $G\action A$ is equivalent to metric ergodicity of $G\action A$ relatively to the projection to a point. 

\begin{lemma}[{\cite{Bader:2025}}]Let $(A,\eta),$ $(B,\mu)$ be two $G$-spaces. If $A\times B\to B$ is relatively metrically ergodic then $G\action A$ is metrically ergodic.
\end{lemma}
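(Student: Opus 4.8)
The plan is to show that any isometric $G$-action witnessing a possible failure of metric ergodicity of $G\action A$ can be transported to an invariant relative section for the factor map $A\times B\to B$, so that relative metric ergodicity forces the relative section (hence the original map) to be essentially constant.

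First I would fix a complete separable metric space $(X,d)$ carrying an isometric $G$-action together with a $G$-equivariant measurable map $f\colon A\to X$; the goal is to prove that $f$ is essentially constant. I would form the constant metric field $\mathbf{X}$ over $(B,\mu)$ with every fibre equal to $X$, whose sections are precisely the measurable maps $B\to X$, and equip it with the constant cocycle $\alpha(g,b)=g$. This is a genuine cocycle: the isometry condition and the cocycle identity are immediate, and the measurability clause reduces to measurability of the orbit maps $g\mapsto d(x_n,gx_m)$ of the $G$-action on $X$, which holds for isometric actions on separable metric spaces. Thus $\mathbf{X}$ is a $G$-metric field over $B$, and $A\times B\to B$ is the relevant $G$-factor (the product $A\times B$ being a Lebesgue $G$-space for the diagonal action).

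Next I would define $\varphi\colon A\times B\to\sqcup_{b\in B}X_b$ by $\varphi(a,b)=f(a)$, viewing $f(a)\in X=X_b$. Measurability of $f$, together with the description of sections of a constant field, shows that $(a,b)\mapsto d(x(b),\varphi(a,b))=d(x(b),f(a))$ is measurable for every section $x$ of $\mathbf{X}$, so $\varphi$ is a relative section; and $G$-equivariance of $f$ gives $\varphi(ga,gb)=f(ga)=g\,f(a)=\alpha(g,b)\varphi(a,b)$, so $\varphi$ is invariant. Applying the hypothesis that $A\times B\to B$ is relatively metrically ergodic, I obtain an invariant section $x$ of $\mathbf{X}$ with $f(a)=\varphi(a,b)=x(b)$ for almost every $(a,b)$. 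A Fubini argument then produces a single $b_0\in B$ with $f=x(b_0)$ almost everywhere, i.e.\ $f$ is essentially constant, which is exactly metric ergodicity of $G\action A$.

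The only point requiring mild care --- and the closest thing to an obstacle --- is verifying that the constant cocycle satisfies the measurability requirement in the definition of a cocycle, which amounts to the measurability of the orbit maps of the isometric $G$-action on $X$; this is the standard measurability fact already alluded to in the Remark following the definition of metric ergodicity. Everything else is bookkeeping about constant fields and a single application of Fubini's theorem.
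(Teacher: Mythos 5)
Your proposal is correct and follows exactly the paper's own argument: form the constant field over $B$ with fibre $X$, turn $f$ into the invariant relative section $\varphi(a,b)=f(a)$ for the factor $A\times B\to B$, and invoke relative metric ergodicity to conclude that $\varphi$ depends only on $b$, hence $f$ is essentially constant. The paper's version is just more terse; your verification of the cocycle measurability and the final Fubini step are details it leaves implicit.
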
 

\begin{proof} Let $\phi\colon A\to X$ be an equivariant $G$-map to some complete separable metric space. Consider $\mathbf{X}$ be the trivial field $X\times B$ over $B$ and define $\varphi(a,b)=\phi(a)$. The map $\varphi$ is an invariant relative section and thus does not depend on $a$, that is  $\phi$ is essentially constant.
\end{proof}

\begin{definition}[{\cite{Bader:2025}}]Let $(B_-,\nu_-),(B_+,\nu_+)$ be $G$-spaces, we say that $(B_-,B_+)$ is a $G$-\emph{boundary pair} if 
\begin{itemize}
\item the actions  $G\action B_+$ and $G\action B_-$ are amenable in Zimmer's sense \cite{MR776417},
\item both first and second projections $B_-\times B_+\to B_\pm$ are relatively ergodic.
\end{itemize}
A $G$-space $(B,\mu)$ is $G$-\emph{boundary} if $(B,B)$ is a $G$-boundary pair.
\end{definition}

Let $\mu$ be a probability measure on $G$. Recall that a $(G,\mu)$-\emph{space} is a $G$-space $(A,\eta)$ such that $\mu\ast\eta=\eta$, where the convolution measure $\mu\ast\eta$ is the pushforward measure of $\mu\times\eta$ under the map $(g,a)\mapsto ga$. Such a measure 
$\nu$ is called $\mu$-harmonic or $\mu$-stationary in the literature. Let $i\colon G\to G$ be the inversion given by $i(g)=g^{-1}$ for any $g\in G$. We denote by $\check{\mu}$ the probability measure $i_*\mu$. Recall that $\mu$ is \emph{symmetric} if $\check{\mu}=\mu$. \\

For the remainder of this section, we fix a spread out non-degenerate probability measure $\mu$ on $G$, that is $\mu$ is absolutely continuous with respect to a Haar measure and its support generates $G$ as semigroup. Let  $(B,\nu)$ be the Poisson boundary associated to $\mu$.  We also denote by $(\check{B},\check{\nu})$ the Poisson boundary of $(G,\check{\mu})$. We refer to \cite{MR0146298,MR2006560} for notions and references about Poisson boundaries and related ergodicity properties. We emphasize that $(B,\nu)$ and $(\check{B},\check{\nu})$ are respectively a $(G,\mu)$-space and a $(G,\check{\mu})$-space.

\begin{theorem}[{\cite{Bader:2025}}]\label{boundary}The pair $(\check B,B)$ is a $G$-boundary pair.
\end{theorem}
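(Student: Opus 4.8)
\textbf{Proof plan for Theorem \ref{boundary}.}

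The goal is to verify the two defining properties of a $G$-boundary pair for $(\check B, B)$: Zimmer-amenability of both $G$-actions, and relative metric ergodicity of both projections $\check B \times B \to \check B$ and $\check B \times B \to B$. The amenability half is essentially classical: since $\mu$ is spread out and non-degenerate, the Poisson boundary $(B,\nu)$ of $(G,\mu)$ carries an amenable $G$-action in Zimmer's sense (this is the Zimmer/Furstenberg fact that stationary actions on Poisson boundaries are amenable; see \cite{MR776417,MR2006560}), and symmetrically $(\check B, \check\nu)$ is amenable as the Poisson boundary of $(G,\check\mu)$. So the only real content is the relative metric ergodicity of the two coordinate projections.

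The plan for relative metric ergodicity is to identify $\check B \times B$ with a suitable Poisson-type boundary for the $\Z$-extension or two-sided picture and exploit the known double ergodicity / relative ergodicity statements for Poisson boundaries. Concretely, I would first recall the standard realization: $B$ can be realized as the space of ``forward'' trajectories modulo tail of the $\mu$-random walk, $\check B$ as the space of ``backward'' trajectories, and $\check B \times B$ sits inside the Poisson boundary of the two-sided shift, so that the projection $\check B \times B \to B$ is the natural factor map induced by forgetting the past. The key input is the theorem (attributed in the excerpt to \cite{Bader:2013zr}, generalizing \cite{MR1911660,MR2006560}) that the factor map from the path space to the Poisson boundary is relatively metrically ergodic; combined with the fact that, conditioned on the future boundary point $b \in B$, the backward walk is an ordinary (inhomogeneous, $b$-dependent) random walk whose own tail $\sigma$-algebra is generated by $\check B$. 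Thus a $G$-equivariant invariant relative section $\varphi: \check B \times B \to \sqcup_b X_b$ over the field $\mathbf X$ on $B$ becomes, after this identification, an invariant measurable map on the path space which factors through the tail; relative metric ergodicity of the tail-to-boundary factor then forces $\varphi$ to factor through $B$ itself, i.e.\ to coincide with a section. The symmetric argument with the roles of $\mu$ and $\check\mu$ exchanged gives relative metric ergodicity of $\check B \times B \to \check B$.

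I would organize the argument around the general principle: a factor map $\pi: A \to B$ is relatively metrically ergodic whenever, for the disintegration $\eta = \int \eta_b \, d\nu(b)$, each conditional measure $\eta_b$ is the Poisson boundary (or is itself metrically ergodic) for a conditional random walk, and the cocycle and field are pulled back from $B$ — because then an invariant relative section restricted to a fiber $A_b = \pi^{-1}(b)$ is a $\operatorname{Stab}$-type equivariant map into the constant metric space $X_b$, which must be essentially constant on the fiber by conditional metric ergodicity. Making ``conditional random walk'' precise is where the real work lies: one needs the disintegration of the two-sided boundary over $B$ to again be a Poisson boundary of a genuine (possibly inhomogeneous) Markov operator, which is exactly the content extracted from \cite{MR1911660,MR2006560,Bader:2013zr}. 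So in the write-up I would invoke those results as a black box for the conditional metric ergodicity statement and spend the effort only on the bookkeeping that translates between ``invariant relative section over the field $\mathbf X$'' and ``fiberwise $G$-equivariant map into a constant separable metric space'', using the definitions from \S\ref{measurable}.

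\textbf{Main obstacle.} The hard part will not be the amenability nor the soft translation between relative sections and fiberwise maps; it is establishing that the coordinate projections of $\check B \times B$ have \emph{relatively metrically ergodic} fibers, i.e.\ that the disintegration of the joint boundary over one coordinate is again a metrically ergodic $G$-space (in fact a conditional Poisson boundary). This is precisely the strengthening of double ergodicity to relative metric ergodicity that \cite{Bader:2013zr} supplies — generalizing the double ergodicity of \cite{MR1911660} and \cite{MR2006560} — and in a fully self-contained treatment one would have to reprove it. Since the excerpt explicitly says a proof is included elsewhere in the paper for completeness, I would cite that internal development (or \cite{Bader:2013zr,Bader:2025}) for this step rather than reconstruct it here, and present Theorem \ref{boundary} as the assembly of: (i) Zimmer-amenability of Poisson boundaries, and (ii) relative metric ergodicity of the boundary factor maps of the one- and two-sided random walks.
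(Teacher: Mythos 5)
Your top-level assembly is exactly the paper's: amenability of $G\action B$ and $G\action \check B$ is quoted from Zimmer \cite{MR0473096}, relative metric ergodicity of $\check B\times B\to\check B$ is obtained from Theorem \ref{rightGspace} applied with $A=\check B$ (a $(G,\check\mu)$-space), and the other projection is handled by the symmetric argument with $\mu$ and $\check\mu$ exchanged, using $\check{\check\mu}=\mu$. So as a deduction of Theorem \ref{boundary} \emph{given} Theorem \ref{rightGspace}, your proposal matches the paper, and deferring the hard step to the internal development is consistent with how the paper itself phrases this proof.

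Where you diverge --- and where there is a genuine soft spot --- is in the sketched mechanism for that hard step. The paper does not disintegrate $\check B\times B$ over one coordinate, and does not pass through conditional random walks or conditional Poisson boundaries. It proves Theorem \ref{rightGspace} by contradiction: given an invariant relative section $\varphi$ that is not a section, it forms the spread function $f(a)=\int_{B\times B}d_a(\varphi(a,b),\varphi(a,b'))\,d\nu(b)\,d\nu(b')$ and kills it using Theorem \ref{thm:SATrec}, which combines Poincar\'e recurrence on the fiber product $A\times_G\Omega$ with the SAT property of $B$ (martingale convergence of the Poisson transform $\nu((\omega_1\cdots\omega_n)^{-1}Y_a)\to\chi_{Y_a}(\omega)$). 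The crucial feature is that the group element $g$ produced there \emph{moves} the base point $a$ to another point $ga\in\pi_1(Y)$ while concentrating $\nu$ on $gY_a$; nothing happens inside a single fiber. By contrast, your ``general principle'' --- that an invariant relative section restricted to a fiber $A_b=\pi^{-1}(b)$ is a $\Stab$-type equivariant map, constant by conditional metric ergodicity --- fails as stated: the fibers of $\check B\times B\to B$ are permuted by $G$, not preserved, and for a general lcsc group (e.g.\ a discrete one) the stabilizer of a.e.\ $b\in B$ is trivial, so any fiberwise statement relative to stabilizers is vacuous. The conditional-walk picture of \cite{MR1911660} can be made rigorous, but only by exploiting equivariance globally across fibers, which is precisely what the recurrence-plus-SAT argument of Theorems \ref{thm:SATrec} and \ref{rightGspace} packages; your plan for the black box should be replaced by, or reduced to, that argument.
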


This  will be deduced from the following statement.

\begin{theorem}[{\cite{Bader:2025}}]\label{rightGspace}Let $(A,\eta)$ be a $(G,\check \mu)$-space. The factor map $A\times B\to A$ is relatively ergodic.
\end{theorem}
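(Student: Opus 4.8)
The plan is to prove relative ergodicity of $A \times B \to A$ by exploiting the fact that $(B,\nu)$ is the Poisson boundary of $(G,\mu)$ together with a Fubini-type argument relating the $\mu$-stationarity of $\nu$ to the $\check\mu$-stationarity of $\eta$. Recall that relative ergodicity (in the metric sense) means: given a $G$-metric field $\mathbf{X}$ over $A$ and an invariant relative section $\varphi \colon A \times B \to \sqcup_{a\in A} X_a$ with $\varphi(a,b) \in X_a$, one must produce an invariant section $x$ of $\mathbf{X}$ with $\varphi(a,b) = x(a)$ for a.e.\ $(a,b)$. So the content is that $\varphi(a,b)$ does not essentially depend on $b$.

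First I would fix a fundamental family and use it to phrase the problem in terms of measurable real-valued functions: it suffices to show that for every section $y$ of $\mathbf{X}$, the function $(a,b) \mapsto d_a(\varphi(a,b), y(a))$ is (essentially) independent of $b$. Next I would use the defining property of the Poisson boundary: $(B,\nu)$ is, up to isomorphism, the space of ergodic components of the time shift on the path space $(G^{\mathbb N}, \mu^{\otimes \mathbb N})$ with the Markov measure, and the relevant ergodicity statement is that $G \curvearrowright (B,\nu)$ together with the measure-preserving structure forces harmonic (bounded) functions on the fibered system to be shift-invariant, hence constant along the boundary. Concretely, the invariance condition $\varphi(ga, gb) = \alpha(g, a)\varphi(a,b)$ combined with $\check\mu$-stationarity of $\eta$ lets one define, for each $a$, a $\check\mu$-stationary (harmonic) assignment $b \mapsto$ "position of $\varphi(a,b)$ relative to a fixed section"; since $B$ is the $(G,\mu)$-Poisson boundary, $\check B$ plays the role of its dual, and one has to check that the only way such an assignment is compatible with the $G$-action is to be essentially $b$-independent. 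The clean way to organize this is to pass to the two-sided path space and use that $\nu$ resolves the tail/invariant $\sigma$-algebra.

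More precisely, I would argue as follows. Let $\Omega = G^{\mathbb N}$ with the measure coming from $\check\mu$ (so that the canonical map $\Omega \to \check B$ or rather $\Omega \to B$ via the $\mu$-walk realizes the boundary). Using that $A$ is a $(G,\check\mu)$-space, the map $(\omega, a) \mapsto (\text{bnd}(\omega), \omega_0 \omega_1 \cdots a$-type action$)$ transports the problem on $A \times B$ to a problem on $\Omega \times A$ where the relevant invariance becomes shift-equivariance. An invariant relative section then pulls back to a shift-equivariant family of points in the fibers of $\mathbf{X}$ over $A$; the martingale convergence / stationarity argument (exactly the mechanism by which bounded $\mu$-harmonic functions are represented on the Poisson boundary) shows this family is shift-invariant, hence measurable with respect to the tail, hence — because $B$ is precisely the boundary and carries no further "noise" — it descends to a genuine invariant section $x$ of $\mathbf{X}$ over $A$. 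Pushing back through the isomorphism $\Omega \times A \to A \times B$ (in the appropriate measure-class sense) yields $\varphi(a,b) = x(a)$ a.e.

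The main obstacle I expect is the passage from real-valued harmonic functions (the classical Poisson boundary theory) to the fibered, metric-space-valued setting: one cannot simply apply the scalar representation theorem to $\varphi$ directly because $\varphi$ takes values in a varying complete separable metric space $X_a$ with no linear structure. The fix is to test against sections: for each fixed section $y$ of $\mathbf{X}$ the scalar function $(a,b)\mapsto d_a(\varphi(a,b),y(a))$ is genuinely real-valued and, crucially, satisfies a one-sided (sub/super)-stationarity coming from the $1$-Lipschitz cocycle action, which is enough to run a martingale argument and conclude it is $b$-a.e.\ constant; separability of the fibers (a countable dense family of sections) then upgrades this to $b$-independence of $\varphi$ itself, after which invariance of the resulting section is automatic from the invariance of $\varphi$. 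A secondary technical point to handle carefully is the measure-class bookkeeping in identifying $A \times B$ with a quotient of $\Omega \times A$, which is where non-degeneracy and spread-out-ness of $\mu$ (hence of $\check\mu$) get used.
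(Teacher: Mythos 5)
Your proposal assembles the right raw material (the path space, the shift, martingale convergence of Poisson transforms, the fiber product $A\times_G\Omega$), but the step that is supposed to do the work --- namely that for each fixed section $y$ the scalar function $(a,b)\mapsto d_a(\varphi(a,b),y(a))$ satisfies a sub/super-stationarity and that a martingale argument then shows it is $b$-a.e.\ constant --- has a genuine gap. For a non-invariant section $y$ this function has no useful equivariance: $d_{ga}(\varphi(ga,gb),y(ga))=d_{ga}(\alpha(g,a)\varphi(a,b),y(ga))$ involves the uncontrolled quantity $d_{ga}(\alpha(g,a)y(a),y(ga))$, so no (sub)harmonicity in $g$ is available; and if one instead tests against the intrinsic quantity $\int_B d_a(\varphi(a,b),\varphi(a,b''))\,d\nu(b'')$, the change of variables replaces $\nu$ by $g^{-1}_*\nu$, and $\nu$ is only quasi-invariant, so the average over $g\sim\mu$ does not close up either. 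Producing an invariant section is precisely what is at stake, so you cannot assume one exists in order to make your test functions equivariant. You correctly identify this as the main obstacle, but the fix you propose does not resolve it, and the phrase ``hence measurable with respect to the tail, hence it descends to $B$'' is too quick: the ergodic components of the shift on $A\times_G\Omega$ are not exhausted by $B$, so tail-triviality of the walk alone does not give $b$-independence.

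The paper's proof is organised quite differently: it argues by contradiction. It quantifies the $b$-dependence by $f(a)=\int_{B\times B}d_a(\varphi(a,b),\varphi(a,b'))\,d\nu(b)\,d\nu(b')$ and assumes $f\ge r>0$ on a set of positive measure. Two mechanisms are then combined in Theorem \ref{thm:SATrec}: (i) Poincar\'e recurrence for the measure-preserving shift on $A\times G^{\mathbb N}$ --- this is exactly where the $\check\mu$-stationarity of $\eta$ enters, as it makes the shift measure-preserving and allows one to return to the set $\{f\ge r\}$; and (ii) martingale convergence of the Poisson transform $g\mapsto\nu(g^{-1}Y_a)$ along the walk, which shows that at a suitable recurrence time $g$ the measure $\nu$ is $(1-\varepsilon)$-concentrated on a set of $b$ for which $\varphi(ga,b)$ lies in a single $\delta$-ball around $\alpha(g,a)x^n_a$. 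Hence $f(ga)\le 2\delta+2\varepsilon+\varepsilon^2$, contradicting $f(ga)\ge r$. To salvage your outline you would need to add the recurrence step explicitly and replace the ``harmonic functions are constant'' heuristic by this concentration estimate.
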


\begin{corollary}[{\cite{Bader:2025}}]\label{doubleergodicity}
The diagonal action of $G$ on $ (\check B\times  B,\check\nu\times \nu)$ is metrically ergodic.
\end{corollary}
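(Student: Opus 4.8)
The plan is to derive the corollary from Theorem \ref{boundary} by invoking the transitivity of relative metric ergodicity. First I would isolate the following composition principle, which is proved by the same constant-field device as the lemma preceding Definition \ref{simsit}: \emph{if $\pi\colon C\to D$ is a relatively metrically ergodic $G$-factor and $G\action D$ is metrically ergodic, then $G\action C$ is metrically ergodic.} Indeed, let $\phi\colon C\to X$ be a $G$-equivariant measurable map into a complete separable metric space; regarding $X$ as the constant field $\mathbf X=X\times D$ over $D$, the map $\phi$, read as a map $C\to\sqcup_{d\in D}X_d$, is an invariant relative section with respect to $\pi$. By relative metric ergodicity there is an invariant section $x$ of $\mathbf X$, that is, a $G$-equivariant measurable map $D\to X$, with $\phi=x\circ\pi$ almost everywhere; since $G\action D$ is metrically ergodic, $x$ is essentially constant, and hence so is $\phi$.

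With this in hand the corollary is a short assembly. By Theorem \ref{boundary} the pair $(\check B,B)$ is a $G$-boundary pair, so by definition both projections $p_-\colon\check B\times B\to\check B$ and $p_+\colon\check B\times B\to B$ are relatively metrically ergodic. Applying the lemma preceding Definition \ref{simsit} to the second projection $p_+$ (which matches that lemma with its first factor equal to $\check B$) shows that $G\action\check B$ is metrically ergodic. Now feed $p_-$ and this metric ergodicity of $G\action\check B$ into the composition principle: it follows that the diagonal action of $G$ on $\check B\times B$ is metrically ergodic, which is the assertion. One can equally produce the relative metric ergodicity of $p_-$ directly from Theorem \ref{rightGspace} applied to the $(G,\check\mu)$-space $A=\check B$, thus using less of Theorem \ref{boundary}.

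I do not anticipate a genuine obstacle: once the composition principle is isolated, what remains is two lines of bookkeeping over Theorems \ref{boundary} and \ref{rightGspace} and the earlier lemma. The one point that deserves a word of justification is that the factors furnished by those theorems are relatively metrically ergodic in the precise sense of Definition \ref{simsit} --- so that the constant-field argument applies verbatim --- and this is exactly how the terminology of \S\ref{sec:metricErg} has been arranged.
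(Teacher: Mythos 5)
Your proof is correct and follows essentially the same route as the paper: both arguments come down to applying the relative metric ergodicity of the two coordinate projections (Theorem \ref{rightGspace} with $A=\check B$, and its mirror image for $\check\mu$) to constant fields, first to see that the map factors through $\check B$ and then to kill the remaining dependence. Your packaging via a general composition principle is a harmless reorganization; note only that the lemma you invoke follows, rather than precedes, Definition \ref{simsit}.
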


\begin{remark}\label{dme} The same argument as in Corollary \ref{doubleergodicity} shows that if $(B_-,B_+)$ is a $G$-boundary pair then $G\action B_-\times B_+$ is metrically ergodic. In particular if $B$ is a $G$ boundary then $G\action B$ is doubly metrically ergodic.\end{remark}

\begin{proof}[Proof of Corollary \ref{doubleergodicity}]
Let $U$ be a metric separable space on which $G$ acts continuously and by isometries. Assume $f:\check B\times  B\to U$ is a $G$-equivariant measurable map. 
Take $A=\check B$. It follows from the Theorem \ref{rightGspace} that $f$ only depends on the first coordinate. 

Now consider the measure $\check \mu$. Then $B$ is a  $(G,\displaystyle\check{\check {\mu}})$-space (in other words,  a $(G,\mu)$-space), so we can apply Theorem \ref{rightGspace} to $B\times \check B$.
 This implies that $f$ does not depend on the first coordinate.

Putting together the two results, we see that $f$ does not depend on the first, nor on the second coordinate. Hence, $f$ is constant.

\end{proof}

The following proposition is a key tool in the proof of Theorem \ref{boundary}. It combines Poincar\'e recurrence theorem for $A$ and SAT property for $B$. Recall that SAT, which means \emph{strongly almost transitive}, is  a weak mixing property introduced by W. Jarowski \cite{MR1285567}.

 \begin{theorem}[{\cite{Bader:2025}}]\label{thm:SATrec}
Let $(A,\eta)$ be a $(G,\check\mu)$-space and $Y\subset A\times B$ be a set of positive $\eta\times\nu$-measure. For $a\in A$, denote by $Y_a$ the set $\{b\in B\mid (a,b)\in Y\}$. Then, for almost every $a\in \pi_1(Y)$, and for every $\varepsilon>0$, there is a $g\in G$ such that 
 \begin{enumerate}[label=(\roman*)]
  \item $\nu(gY_a)>1-\varepsilon$
  \item $ga\in\pi_1(Y)$ where $\pi_1$ is the first projection $A\times B\to A$.
 \end{enumerate}
 \end{theorem}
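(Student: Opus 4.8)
\textbf{Plan of proof for Theorem \ref{thm:SATrec}.}

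The plan is to exploit the stationarity of $B$ with respect to $\check\mu$ --- via the SAT property of the Poisson boundary $(\check B,\check\nu)$ --- together with a Poincaré-type recurrence argument for the measure-preserving-in-class action of the random walk on $A$. First I would recall the two ingredients. The SAT property of $(\check B,\check\nu)$ states, after unwinding, that for the $\mu$-boundary $(B,\nu)$ (which is a $(G,\check{\check\mu})=(G,\mu)$-space) and for any measurable $W\subseteq B$ with $\nu(W)>0$, one can find $g\in G$ with $\nu(gW)>1-\varepsilon$; moreover the set of such $g$ is ``large'' in the sense that it is not $\check\mu^{*n}$-null for some $n$ --- in fact, for $\check\mu$-almost every infinite path $(g_1,g_2,\dots)$ the products $g_n\cdots g_1$ eventually have this property. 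This is exactly the statement that $\nu$-almost every point of $B$ is a ``point of full measure push'' under the backward walk. On the $A$ side, since $(A,\eta)$ is a $(G,\check\mu)$-space, the one-sided shift on the path space $(G^{\N},\check\mu^{\otimes\N})\times(A,\eta)$ with the skew map $(\,\underline g,a)\mapsto(\sigma\underline g, g_1 a)$ preserves the measure class and in fact, after passing to the appropriate stationary measure, preserves a probability measure, so Poincaré recurrence applies: for any positive-measure set the orbit returns to it infinitely often.

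The key steps, in order, are as follows. First, fix $Y\subseteq A\times B$ with $(\eta\times\nu)(Y)>0$ and set $A_0=\{a\in\pi_1(Y): \nu(Y_a)>0\}$; by Fubini $\eta(A_0)>0$ and it suffices to work over $A_0$. Second, build the path space $\Omega=G^{\N}$ with measure $\Prob=\check\mu^{\otimes\N}$, and consider on $\Omega\times A$ the cocycle dynamics, together with the fibered map on $\Omega\times A\times B$ given by $(\underline g,a,b)\mapsto(\sigma\underline g,\, g_1a,\, g_1 b)$; this preserves $\Prob\times\eta\times\nu$ by stationarity of both $\eta$ (under $\check\mu$) and $\nu$ (under $\mu=\check{\check\mu}$, noting $g_1$ is distributed as $\check\mu$ so $g_1^{-1}$ as $\mu$ --- here one must be slightly careful with the direction of the walk, and I would set the convolution conventions so that the relevant push is by the forward products). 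Third, apply Poincaré recurrence to the set $\widetilde Y=\{(\underline g,a,b): (a,b)\in Y\}$ of positive measure: for a.e. $(\underline g,a,b)\in\widetilde Y$ the forward products $h_n=g_n\cdots g_1$ satisfy $h_n a\in\pi_1(Y)$ for infinitely many $n$, giving conclusion (ii) along a subsequence. Fourth, and this is where SAT enters, use that for $\check\mu$-a.e. $\underline g$ the forward products $h_n$ are SAT-pushing sequences for $(B,\nu)$: $\nu(h_n W)\to 1$ for every fixed positive-measure $W$, in particular for $W=Y_a$ once $a$ is fixed. Intersecting the full-measure set of $\underline g$ on which recurrence holds with the full-measure set on which the SAT-pushing holds, one extracts a single $g=h_n$ (for $n$ large along the recurrence subsequence) with both $\nu(gY_a)>1-\varepsilon$ and $ga\in\pi_1(Y)$.

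The main obstacle I expect is the interaction between the two asymptotic statements: recurrence gives a good $n$ along an $a$-dependent (and $\underline g$-dependent) subsequence, while the SAT push $\nu(h_nY_a)\to1$ is an eventual statement, so one needs them to be compatible on a common conull set. Concretely, the subtlety is that the SAT property must be invoked not just for a fixed target set but uniformly enough that it applies to $W=Y_a$ for a.e. $a$ simultaneously --- this is handled by noting that the pushing property ``$\nu(h_nW)\to1$ for all positive-measure $W$'' holds for $\Prob$-a.e. $\underline g$ once and for all (it is a property of the path, independent of $W$), which is precisely the strong form of SAT for the Poisson boundary; then for a.e. $(\underline g,a)$ in the relevant fiber both properties hold and the eventual SAT push can be realized at one of the infinitely many recurrence times. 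A secondary point requiring care is the measurability of $a\mapsto Y_a$ and of the various fibered maps, and making sure that ``$a\in\pi_1(Y)$'' is interpreted modulo the conull modifications forced by Poincaré recurrence; these are routine but must be stated cleanly. I would also double-check the convolution/inversion bookkeeping so that stationarity is used in the correct direction on each of the two factors $A$ and $B$ --- this is the kind of place where a sign error silently breaks the argument.
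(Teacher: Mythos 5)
Your overall strategy --- Poincar\'e recurrence on a skew product over $A$ combined with a ``pushing'' property of the Poisson boundary along the trajectory of the walk --- is the same as the paper's. But the step where you invoke SAT contains a genuine error. You claim that for $\Prob$-a.e.\ path $\underline g$ the products $h_n$ satisfy $\nu(h_nW)\to 1$ for \emph{every} positive-measure $W$, ``independent of $W$''. This is self-contradictory: if $W$ and $B\setminus W$ both have positive measure, then $\nu(h_nW)+\nu\bigl(h_n(B\setminus W)\bigr)=1$ for all $n$, so the two quantities cannot both tend to $1$. What martingale convergence actually gives is $\nu(Z_n^{-1}W)=h(Z_n)\to\chi_W(\mathrm{bnd}(\omega))$ for a.e.\ path $\omega$, where $h$ is the Poisson transform of $\chi_W$ and $\mathrm{bnd}(\omega)$ is the boundary point determined by the path; the limit is $1$ only when the path converges to a point of $W$, and is $0$ otherwise. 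Applied to $W=Y_a$, the push $\nu(h_nY_a)\to 1$ therefore fails on a set of paths of probability $1-\nu(Y_a)$, which is positive in general, so ``intersecting the conull set where recurrence holds with the conull set where the SAT push holds'' does not work: the second set is not conull.

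The fix is precisely the point of the paper's proof: one must \emph{couple} the path with the boundary point rather than take the independent product $\Omega\times A\times B$. The paper realizes $B$ as the space of ergodic components of the shift on the increment space and works on the fiber product $A\times_G\Omega\cong A\times G^{\N}$; the set $\tilde Y_a$ is then exactly the set of paths whose boundary point lies in $Y_a$, and for those paths $\chi_{Y_a}(\omega)=1$, so the martingale limit is $1$ as required, while Poincar\'e recurrence applied to $\tilde Y$ in the same space supplies the times $n$ with $T^n(a,\omega)\in\tilde Y$. (Your independent product also has a secondary defect: the map $(\underline g,a,b)\mapsto(\sigma\underline g,g_1a,g_1b)$ with $g_1\sim\check\mu$ pushes $\nu$ to $\check\mu\ast\nu$, and $\nu$ is $\mu$-stationary, not $\check\mu$-stationary, so $\Prob\times\eta\times\nu$ is not preserved; this issue disappears once $B$ is absorbed into the path space.) With the coupled construction both asymptotic statements hold for a.e.\ point of $\tilde Y$ and a common large $n$ can be chosen exactly as you intended.
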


 \begin{proof}
 We will use the definition of the Poisson boundary as a space of ergodic components of the space of increments of the random walk $\Omega=G\times G^\bN$ by the shift $S$.
 
 We can define another shift $T$ from $A\times \Omega$ to itself, defined by $T(a,g,\omega_1,\omega_2,\dots)=(g^{-1}a,\omega_1,\omega_2,\dots)$. Since $A$ is a $(G,\check\mu)$-space, we have $\check\mu\ast\eta=\eta$, and we conclude that $T$ preserves the measure $m:=\eta\times\mu\times\mu^{ \bN}$ on $A\times\Omega$.
 
 Let us consider the fiber product $X=A\times_G\Omega$. This is defined as the quotient space of $Y\times \Omega$ by the relation $(a,hg,\omega_1,\dots)\sim (h^{-1}a,g,\omega_1,\dots)$, for all $h\in G$. It follows that the space $X$ is isomorphic to $A\times G^\bN$. The pushforward of the measure on $A\times \Omega$ to $X$ is simply the measure $\eta\times\mu^{\bN}$. Furthermore, the shift $T$ preserves the equivalence relation, so it still acts on $X$, and  also preserves the measure.

 Let $Y\subset A\times B$ be such that $\eta\times\nu(Y)>0$. We can consider the preimage of $Y$ in $A\times \Omega$, and then push it forward to get a subset $\tilde Y$ of $X$. First we note that, by Poincar\'e recurrence theorem, we have that, for almost every $x\in\tilde Y$, there are infinitely many $n\in\bN$ such that  $T^n x\in\tilde Y$.
 
 If $a\in A$, define the set $\tilde Y_a=\{\omega\in G^\bN\mid (a,\omega)\in\tilde Y \}$. By Fubini, we have  $\mu^\N(\tilde Y_a)>0$ for almost every $a\in\pi_1(\tilde Y)$ (where $\pi_1:A\times G^\bN\to A$ denotes the first projection as well). In other words, for almost every $x\in\tilde Y$, we have $\nu(Y_{\pi_1(x)})>0$.
 
 Let us fix $x\in\tilde Y$ in the intersection of the two conull sets defined above. Let $a=\pi_1(x)$.
  The set $Y_a$ is of positive measure, so its characteristic function $\chi_{Y_a}$ is an element of $L^\infty(B)$ which is not zero. 
  Let $h$ be its Poisson transform. Recall that it is defined as $$h(g)=\int_B\chi_{Y_a}\dd g_*\nu=\nu(g^{-1} Y_a).$$
   This function is a non-zero bounded harmonic function on $G$. 
 
 By definition of the Poisson transform, we have, for almost every $\omega=(\omega_1,\omega_2,\dots)\in \tilde Y_a$, 
 \begin{align*}
 \nu((\omega_1\omega_2\dots\omega_n)^{-1} \tilde Y_a)&= h(\omega_1\omega_2\dots\omega_n)\\
 &\underset{n\to+\infty}\longrightarrow \chi_{Y_a}(\omega)=1.
 \end{align*}
 
 In particular, we might pick $n$ large enough so that $\nu((\omega_1\omega_2\dots\omega_n)^{-1} \tilde Y_a)>1-\varepsilon$ and also satisfying $T^n(a,\omega)\in\tilde Y$. Setting $g=(\omega_1\omega_2\cdots\omega_n)^{-1}$, we have by definition $\nu(g Y_a)>1-\varepsilon$. Furthermore since $T^n(a,\omega)\in Y$, we have $(ga,\omega_{n+1},\dots)\in Y$ ; hence $Y_{ga}\neq\emptyset$.

 \end{proof}

\begin{proof}[Proof of Theorem \ref{rightGspace}]Let $\mathbf{X}$ be a metric field over $(A,\eta)$ on which $G$ acts via the cocycle $\alpha$. Observe that we may assume that any fiber $X_a$ has diameter at most 1. Otherwise, we replace $d_a$ by $\max(d_a,1)$ and we obtain a new  metric field over $B$ on which $G$ acts as well.\\
Let $\varphi$ be an invariant relative section. Let us define
$$f(a)=\int_{B\times B}d_a(\varphi(a,b),\varphi(a,b'))\dd \nu(b)\dd\nu (b').$$

Our assumption implies that $f$ is not essentially $0$. In particular, there is an $r>0$ such that $A(r):=f^{-1}([r,+\infty))$ is of positive measure.

Take a small $\delta>0$. Let $\{x^n\}$ be a fundamental family of $\mathbf{X}$. Then there is $n\in\N$ such that $Y=\{(a,b)\in Y_r\times B\mid d(\varphi(a,b),x^n_a)\leq\delta \}$ has positive measure, say $>\varepsilon$.

By  Theorem \ref{thm:SATrec}, this implies that there is a $a\in \pi_1(Y)$ and a $g\in G$ such that $ga\in\pi_1(Y)$ and $\nu(gY_a)>1-\varepsilon$.

Now for $(ga,b)\in gY_a$ and $(ga,b')\in gY_a$ (in other words, $b,b'\in Y_{ga}$), we know that  $ d(\varphi(ga,b),\alpha(g,a)x^n_a)\leq\delta$ and $ d(\varphi(ga,b'),\alpha(g,a)x^n_a)\leq\delta$. So these two points, $\varphi(ga,b)$ and
$\varphi(ga,b')$ are in the same ball of radius $\delta$. Therefore we have $d(\varphi(ga,b),\varphi(ga,b'))\leq2\delta$.

Let us decompose $B\times B$ as $$(gY_a\times gY_a)\cup ((B\setminus gY_a) \times gY_a) \cup( gY_a\times (B\setminus gY_a))\cup ((B\setminus gY_a \times B\setminus gY_a)).$$
We know that $\nu(B\setminus gY_a)<\varepsilon$. Hence, $\nu((B\times B)\setminus (gY_a\times gY_a))<\varepsilon^2+2\varepsilon$.

Therefore, we have 
$$f(ga)<\int_{gY_a\times gY_a} d(\varphi(ga,b),\varphi(ga,b'))\dd\nu(b)\dd\nu(b')+2\varepsilon+\varepsilon^2$$

It follows that $f(ga)<2\delta+2\varepsilon+\varepsilon^2$. Since we also assumed that $ga\in\pi_1(A)$, we have that $g\in A(r)$. Therefore, $f(ga)>r$. Since our choice of $\delta$ and $\varepsilon$ is arbitrary, we get a contradiction.

\end{proof}

\begin{proof}[Proof of Theorem \ref{boundary}]The amenability of $G\action B$ is due to Zimmer \cite{MR0473096} and relative ergodicity for $\check B\times B\to \check B$ comes from Theorem \ref{rightGspace}. Relative ergodicity for $\check B\times B\to  B$ is similar using that $\check{\check\mu}=\mu$.
\end{proof}

\section{Furstenberg maps}\label{Bmap}

The geometric part of Theorem \ref{main} uses the following version of Adams-Ballmann theorem \cite{MR1645958,MR2558883} as a key tool.

\begin{theorem}[Equivariant Adams-Ballmann theorem {\cite[Theorem 1.8]{MR3044451}}]\label{AB} Let $(A,\eta)$ be an ergodic $G$-space such that $G\action A$ is amenable. Let $\X$ be a CAT(0) field of finite telescopic dimension.

If $G$ acts on $\X$ then there is an invariant section of the boundary field $\bd\X$ or there exists an invariant Euclidean subfield of $\X$.
\end{theorem}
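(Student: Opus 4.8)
The plan is to follow the proof of the Adams--Ballmann theorem \cite{MR1645958} in the finite-telescopic-dimension form of \cite{MR2558883}, carried out fibrewise over $A$, the additional work being to keep track of measurability throughout and to invoke ergodicity of $G\action A$. The starting point is amenability. The bordification field $\mathbf K$ of \S\ref{measurable} has compact metrizable fibres $K_a=\overline{\iota(X_a)}$, inside which $X_a$ sits as a convex subset and $\bd X_a$ as a distinguished set of Busemann functions; hence the field of probability measures $\mathbf{Prob}(\mathbf K)=(\mathrm{Prob}(K_a))_{a\in A}$ is a field of compact convex metrizable spaces on which $G$ acts affinely through the cocycle $\alpha$, by pushforward of measures. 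Amenability of $G\action A$ in Zimmer's sense \cite{MR776417} is exactly what guarantees a $G$-invariant measurable section $\mu=(\mu_a)$ of this field.

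Next I would attach to each pair $(X_a,\mu_a)$, by a construction depending only on the isometry type and therefore equivariant under $\Isom(X_a)$, either a point of $\bd X_a$ or a flat of $X_a$, a point of $X_a$ counting here as a $0$-dimensional flat. This is the geometric heart, namely the Adams--Ballmann dichotomy revisited with the tools of \S\ref{CAT(0)}. One forms the convex $1$-Lipschitz function $f_a\colon x\mapsto\int_{K_a}h(x)\,d\mu_a(h)$ obtained by integrating $\mu_a$ regarded as a measure on horofunctions of $X_a$ (each horofunction is $1$-Lipschitz and vanishes at the basepoint, so $f_a$ is real-valued), analyses the behaviour of $f_a$ along the flats of $X_a$ using Proposition \ref{relpos} and the sphere decomposition Lemma \ref{sph}, and peels off Euclidean factors via Lemma \ref{flatproduct} and Lemma \ref{flatbusemann}. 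Whenever an infimum fails to be attained, or a canonically associated closed convex subset turns out to be empty, the $\mathscr T_c$-compactness of $\overline{X_a}$ produces a canonical point of $\bd X_a$ in the intersection of the relevant boundaries; and whenever the residual datum is a probability measure on the CAT(1) space $\bd X_a$ --- which has geometric dimension at most $n-1$ --- either, using $\dim\bd X_a<\infty$, its support for the Tits metric has radius $\le\pi/2$ and the unique circumcenter of the set of its circumcenters is the desired boundary point, or the sphere structure of $\bd X_a$ forces a maximal flat. The key point for us is that which of the two alternatives occurs is itself determined canonically by $\mu_a$, so that no non-canonical choice between them is ever made.

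It remains to globalize. Each operation used above --- circumcenters in CAT(0) and CAT(1) fields (Lemma \ref{cenCAT1}), decomposition of unions of parallel flats (Lemma \ref{parallel field}), passage to fields of subspheres and their associated Euclidean subfields (Lemma \ref{subeuc}), measurability of fibrewise dimension (Lemma \ref{dim}), and working inside the boundary field $\bd\mathbf X$ (Definition \ref{bndfield}) --- has been arranged to respect the measurable structure, so the assignment $a\mapsto(\text{canonical object})$ is measurable; it is $G$-equivariant because the fibrewise construction commutes with isometries and $\mu$ is $G$-invariant. The set $A_0\subseteq A$ on which the construction returns a point of $\bd X_a$ is $G$-invariant, hence null or conull by ergodicity of $G\action A$. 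If it is conull we obtain an invariant section of $\bd\mathbf X$; if it is null, splitting $A$ further into the $G$-invariant sets on which the flat has a fixed dimension $d$ (measurable by Lemma \ref{dim}) and using ergodicity again, we obtain an invariant Euclidean subfield.

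The main obstacle is the geometric dichotomy of the second paragraph, and in particular the construction of a canonical flat when no canonical boundary point is available: this is the part of the Adams--Ballmann argument for which finite telescopic dimension is genuinely needed --- through the $\mathscr T_c$-compactness of $\overline{X_a}$ and the bound $\dim\bd X_a\le n-1$ --- and carrying out the underlying induction (the de Rham splittings, the successive choices of subspheres) so that every selection is manifestly measurable in $a$ is precisely the point at which the machinery of \S\ref{measurable} is used in full.
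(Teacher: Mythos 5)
First, a point of comparison: the paper does not prove Theorem \ref{AB} at all --- it is imported as a black box from \cite[Theorem 1.8]{MR3044451} and only \emph{used} in \S\ref{Bmap} and in Appendix \ref{nonemptyness}. So there is no in-paper argument to measure your proposal against; what you have written is an outline of the proof in the cited source, and your first and third paragraphs (amenability produces an invariant section of the field of probability measures on the bordification field $\mathbf K$; ergodicity together with Lemma \ref{dim} turns a fibrewise canonical dichotomy into the stated alternative) are indeed the correct and standard opening and closing moves of that argument.

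The genuine gap is the one you yourself flag: the entire mathematical content of the theorem is concentrated in your second paragraph, and there it is asserted rather than proved. Producing, from an arbitrary probability measure $\mu_a$ on $K_a$, \emph{canonically} either a point of $\bd X_a$ or a flat of $X_a$ is precisely the Adams--Ballmann dichotomy \cite{MR1645958} in the finite-telescopic-dimension form of \cite{MR2558883}, and none of its steps is routine: the analysis of the min-set of $f_a=\int h\,d\mu_a(h)$ (which may be all of $X_a$, so that $f_a$ alone carries no information and a second-order argument on the part of $\mu_a$ supported at infinity is required), the proof that when no canonical boundary point exists the relevant horofunctions are affine on the min-set so that Lemmas \ref{flatproduct} and \ref{flatbusemann} can assemble a flat, and the claim that the residual measure on $\bd X_a$ either has support of Tits radius at most $\pi/2$ or ``forces a maximal flat'' are all theorems, not observations. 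Moreover the canonicity of \emph{which} alternative occurs --- which you correctly identify as the crux for both equivariance and measurability --- is exactly what must be extracted from that proof and is not automatic. As it stands your text is a correct roadmap to \cite[Theorem 1.8]{MR3044451} rather than a proof of it; to make it self-contained you would need to carry out the fibrewise construction in detail and verify, at each step of the underlying induction, that the object produced is determined by the metric data alone.
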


Before beginning the proof of Theorem \ref{main} we can give a hint about its structure. Thanks to Theorem \ref{AB} used for a constant field, the only case where there is something to prove is the case where there is an invariant Euclidean subfield and no Furstenberg map. In that case, we choosing a minimal such invariant Euclidean subfield. Then, we analyze the possible relative positions of two subflats of $X$ by using Proposition \ref{relpos}. Using relative metric ergodicity we conclude that this  subfield must actually be constant equal to some fixed Euclidean subspace.

Theorem \ref{main} will actually be deduced straightforwardly from this more general theorem for boundary pairs.

\begin{theorem}\label{mainpairs} Let $X$ be a CAT(0) space of finite telescopic dimension and let $G$ be a locally compact second countable group acting continuously by isometries on $X$ without invariant flats. If $(B_-,\nu_-)$ and  $(B_+,\nu_+)$ form a  $G$-boundary pair then there exist  measurable $G$-maps $\varphi_\pm \colon B_\pm\to \bd X$.
\end{theorem}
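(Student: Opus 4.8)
The plan is to derive Theorem \ref{mainpairs} from the Equivariant Adams-Ballmann Theorem \ref{AB} applied to constant fields over $B_-$ and over $B_+$. Since a $G$-boundary pair gives amenable ergodic actions $G\action B_\pm$ (amenability is part of the definition; ergodicity follows from relative metric ergodicity), Theorem \ref{AB} produces, for each sign, either an invariant section of the boundary field $\bd\mathbf{X}_\pm$ — which is exactly a measurable $G$-map $B_\pm\to\bd X$ by the constant-field lemma, giving the desired $\varphi_\pm$ — or an invariant Euclidean subfield. So the only case to treat is when at least one side, say $B_+$, carries an invariant Euclidean subfield and admits no Furstenberg map; I will argue this case cannot occur under the no-invariant-flats hypothesis. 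Among all invariant Euclidean subfields of the constant field over $B_+$ choose one, $\mathbf{E}$, of minimal fiber dimension $d$ (the dimension is measurable by Lemma \ref{dim} and $G$-invariant, hence a.e.\ constant, by ergodicity of $B_+$); note $d\ge 1$ since an invariant $0$-dimensional subfield is an invariant section of $X$, contradicting the dichotomy with a Furstenberg map, or rather a fixed point, anyway excluded.

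Next I would exploit relative metric ergodicity of $B_-\times B_+\to B_+$ (and the symmetric projection) to show $\mathbf{E}$ is in fact a \emph{constant} field, i.e.\ comes from a single Euclidean subspace $E\subseteq X$. The mechanism: given two independent boundary points $b_-\in B_-$, $b_+\in B_+$, one analyzes the relative position of the flats $E_{b_+}$ and $E_{b_-'}$ (for a second independent copy) using Proposition \ref{relpos} and Lemma \ref{flatproduct}. The distance function from $E_{b_+}$ restricted to $E_{b_-'}$ is convex on a Euclidean space; by Proposition \ref{relpos} either its infimum is not attained, yielding a canonical point at infinity — a Furstenberg-type map contradicting our standing assumption — or it is attained and the minimal set splits, in which case either the two flats are parallel or one finds a canonical center / boundary point again giving a map. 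Parallelism of all the flats $E_b$ forces, via Lemma \ref{parallel field}, a splitting $\mathbf{Y}=\mathbf{E}\times\mathbf{Z}$ of the union-of-parallel-flats subfield with an invariant section in $\mathbf{Z}$; the invariant section in $\mathbf{Z}$ together with the invariant field $\mathbf{E}$ over $B_+$ should, through relative metric ergodicity, descend to a genuinely $G$-invariant flat in $X$ — contradicting the hypothesis that $G$ has no invariant flats. Minimality of $d$ is used to rule out the intermediate cases of Proposition \ref{relpos} where a proper subsphere (hence a lower-dimensional invariant Euclidean subfield) would appear.

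The main obstacle, as I see it, is the descent step: turning an invariant Euclidean subfield over the boundary $B_+$ into an honest invariant flat (or fixed point) in $X$ itself. This is precisely where relative metric ergodicity — rather than mere ergodicity — is essential: one wants to say that an invariant relative section of a suitable $G$-metric field over $B_+$ (built from the space of sub-Euclidean-spaces via Lemma \ref{subeuc}, or from the field $\mathbf{Z}$ of Lemma \ref{parallel field}) "coincides with a section", i.e.\ is pulled back from the base, hence $G$-fixed. Concretely I would package the data (the parallel class of $E_b$, equivalently its boundary subsphere $s_b\in\mathbf{S}$, or the canonical center coming from Proposition \ref{relpos}) as an invariant relative section of an appropriate metric field over the point, using the boundary pair structure to play $B_-$ against $B_+$ as in the proof of Corollary \ref{doubleergodicity}, and conclude it is constant. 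The bookkeeping of which metric field to use in each branch of Proposition \ref{relpos}, and checking the measurability of all the constructions via the lemmas of \S\ref{measurable}, is the technical heart; the geometric input is entirely contained in Proposition \ref{relpos}, Lemma \ref{flatproduct}, and Lemma \ref{flatbusemann}. Finally, Theorem \ref{main} follows by taking $B_-=B_+=B$.
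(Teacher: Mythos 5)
Your overall architecture coincides with the paper's treatment of what it calls Case~(I): reduce to a minimal, separable action with trivial Euclidean de~Rham factor, apply Theorem~\ref{AB} to the constant fields over $B_\pm$, take an invariant Euclidean subfield of minimal fiber dimension, classify the relative position of two fibers via Proposition~\ref{relpos}, use relative metric ergodicity of $B_-\times B_+\to B_\pm$ to kill the dependence on the second variable in each branch, and in the parallel branch use Lemma~\ref{flatproduct} to split off a Euclidean factor and reach a contradiction. (One caveat on your endgame: parallelism of almost all pairs does not directly ``descend to a $G$-invariant flat''; what one gets, via Fubini, a dense countable subgroup and continuity, is that the union of flats parallel to a fixed $E_b$ is a $G$-invariant closed convex set splitting as $E\times T$, which by minimality equals $X$, so $E$ sits in the Euclidean de~Rham factor and is therefore a point; then double metric ergodicity produces a fixed point. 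The individual flats $E_b$ are not invariant, only the field is.)

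The genuine gap is the asymmetric case, which your proposal cannot reach with the tools you invoke. Theorem~\ref{AB} is applied separately over $B_-$ and over $B_+$, and nothing forces the dichotomy to resolve the same way on both sides: it may produce a boundary map $b\mapsto\xi_b$ on $B_-$ but only an invariant Euclidean subfield $\mathbf E$ on $B_+$. In that situation there is no field of flats over $B_-$, so there are no pairs $(E_b,E_{b'})$ with $b\in B_-$, $b'\in B_+$ to feed into Proposition~\ref{relpos}; and comparing two flats $E_{b'_1},E_{b'_2}$ both indexed by $B_+$ is not licensed, because a boundary pair only gives relative metric ergodicity for the two projections of $B_-\times B_+$, not for $B_+\times B_+\to B_+$. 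The paper handles this case by a different mechanism: restrict the Busemann function of $\xi_b$ to $E_{b'}$; if it is non-constant, Proposition~\ref{relpos} plus relative ergodicity yields an invariant flat or a fixed point at infinity (excluded); if it is constant, Lemmas~\ref{parallel field} and~\ref{flatbusemann} give a splitting $\mathbf E\times\mathbf Z$ with an invariant section $z$ of $\mathbf Z$ and $\xi_b\in\bd Z_{b'}$, and then the point $z^r_{b'}(b)$ at distance $r$ from $z_{b'}$ on the ray toward $\xi_b$ is an invariant relative section of the ball field $\mathbf B^r$; relative ergodicity makes it independent of $b$, and letting $r\to\infty$ produces $\eta_{b'}\in\bd X$, i.e.\ the missing map $\varphi_+$. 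Note that in this branch the hypothesis ``invariant Euclidean subfield over $B_+$ but no Furstenberg map from $B_+$'' is not shown to be vacuous by a flat-versus-flat contradiction; the map is constructed directly. Your proposal is complete only for a single $G$-boundary $B_-=B_+=B$ (where the two applications of Theorem~\ref{AB} can be taken to agree), not for a general boundary pair as the statement requires.
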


\begin{proof} First we do some reductions to prove the theorem. Clearly, it suffices to prove the theorem in case there is no fixed point of $G$ in $\bd X$ since otherwise we have a trivial map $B_\pm\to\bd X$. Let us assume there is no fixed point at infinity. In that case, there is a $G$-invariant closed convex space with a minimal $G$-action. That is there is no non-trivial $G$-invariant closed convex subset  \cite[Proposition 1.8 (ii)]{MR2558883}. Thus we may assume that this space is actually $X$ itself and $X$ is the closed convex hull of any orbit. Orbits are separable since $G$ acts continuously and $G$ is second countable. Thus $X$ is separable.

Let $X=H\times Y$ be the de Rham decomposition of $X$ where $H$ is a Hilbert space --- of finite dimension since $X$ has finite telescopic dimension --- and $Y$ is another CAT(0) space of finite telescopic dimension. The action of $G$ on $X=H\times Y$ is diagonal and since the action of $G$ on $X$ is minimal, both actions $G\action H$ and $G\action Y$ are minimal. Observe it suffices to prove the result for $G\action Y$ because $\bd Y\subseteq\bd X$ and if $F$ is a flat of $Y$ then $H\times F$ is a flat of $X$. We may assume that $X=Y$, that is $X$ has no Euclidean de Rham factor of positive dimension.

Thus we reduce the proof the following case: $X$ is separable, has trivial  Euclidean de Rham factor and the action of $G$ is minimal.\\

We consider the constant fields $\mathbf{X}^\pm$ over $B_\pm$ with fibers $X$. These fields are endowed naturally with actions of $G$ and we can apply Theorem \ref{AB}. For each $\mathbf X^\pm$ we get a map from $B_\pm$ to $\bd X$ or an invariant Euclidean subfield of $\mathbf X^\pm$. 
In the first case, one gets two $G$-maps  $B_\pm\to \bd X$, we are done. Up to permuting $B_+$ and $B_-$, one may assume either that we get two invariant Euclidean subfields $\mathbf E^-$ and $\mathbf E^+$ of respectively $\mathbf X^-$ and $\mathbf X^+$ and there is no $G$-map $B_\pm\to \bd X$ (\textbf{Case (I)}) or we get a $G$-map $b\mapsto \xi_b$ from $B_-$ to $\bd X$ and a Euclidean subfield $\mathbf E$ of $\mathbf X^+$ (\textbf{Case (II)}).

Readers interested only in the case of a $G$-boundary $B$, that is $B=B_-=B_+$, may read only case (I) with $\mathbf X^+=\mathbf X^-$ and $\mathbf E^+=\mathbf E^-$.\\

\textbf{Case (I)}: Thanks to Lemma \ref{dim}, the map $b\mapsto \dim (E^-_b)$ is measurable and $G$-invariant. Thanks to ergodicity of $G\action B_-$, this dimension is essentially constant. We may assume that this dimension is minimal among dimensions of invariant Euclidean subfields of $\mathbf X_-$. For simplicity, we note $E_b=E^-_b$ and $E_{b'}=E^+_{b'}$, for $b\in B_-$ and $b'\in B_+$.

We aim to apply Proposition \ref{relpos} to each pair $(E_b,E_{b'})$ for $(b,b')\in B_-\times B_+$. First, we show that the four conditions in the proposition are measurable and $G$-invariant; thanks to ergodicity of $G\action B_-\times B_+$, exactly one condition will be satisfied for almost every $(b,b')\in B_-\times B_+$.

Let $\{x^n_b\}_{b\in B_-}$ be a fundamental family of $\mathbf{E^-}$, and $\{x^n_{b'}\}_{b'\in B_+}$ be a fundamental family of $\mathbf E^+$. We define $d(b,b')=\inf_{n,m}d(x^n_b,x^m_{b'})$. The function $d:B_-\times B_+\to \R$ is measurable. The infimum distance between $E_b$ and $E_{b'}$ is not achieved if and only if for any $N\in\N$ and any $n$ such that $d(x^0_b,x^n_{b'})<N$, one has $\inf_{m}d(x^n_b,x^m_{b'})>d(b,b')$. This condition is measurable and $G$-invariant.

If the minimal distance between $E_b$ and $E_{b'}$ is achieved, then the subset of $E_b$ where the distance is achieved is not bounded if and only if for any $N\in\N$, there exists $n_1,n_2$ such that $d\left(x^{n_1}_b,x^{n_2}_b\right)>N$ and $\inf_{m}d\left(x^{n_i}_b,x^m_{b'}\right)\leq1+d(b,b')$. 

Now, we know that exactly one of these four relative positions given in Proposition \ref{relpos} between $E_b$ and $E_{b'}$ happens for almost every $(b,b')$. We treat the four cases independently.\\

{\it Case (i):} If the infimum distance is not achieved, choose a section $y$ of $\mathbf{E^-}$ and consider (for $b'$ in a set of full measure) the nonincreasing sequence of subfields $\mathbf{X}^n$ where we define $X_b^n$ as $X_b^n=\left\{x\in E_b,\ d(x,E_{b'})\leq d(b,b')+\frac{1}{n}\right\}.$
We can now apply \cite[Proposition 8.10]{MR3044451} and obtain a section $\xi_b(b')$ of  the boundary field $\bd\mathbf{E}^-$. This field  is a metric field with a $G$-action by Lemma \ref{boueuc}. 
By construction, $\xi$ satisfies $\alpha(g,b)\xi_b(b')=\xi_{gb}(gb')$. This means that $\xi:B_-\times B_+\to \sqcup \bd E_b$ is an invariant section relative to the first projection $B_-\times B_+\to B_-$. 
 Now, thanks to relative metric ergodicity, $\xi$ does not depends on $b'$. So we can interpret $b\mapsto\xi_b$ as a measurable $G$-map from $B_-$ to $\bd X$. This yields a contradiction.\\

If the minimal distance is achieved, we define  $Y_b(b')=\{x\in X_b,\ d(x,E_{b'})=d(b,b')\}$.
Then the family $\mathbf{Y}=(Y_b(b'))_b$ 
is a subfield of $\mathbf{X}$ (which depends on $b'$).\\
 
{\it Case (ii):} Assume the minimal set $Y_b(b')$ is bounded. Let $x_b(b')$ be its circumcenter. Then for almost all $b'$, the family of circumcenters $\{x_b(b')\}$ is a section of $\mathbf E^-$ \cite[Lemma 8.7]{MR3044451} which satisfies $\alpha(g,b)x_b(b')=x_{gb}(gb')$, that is an invariant relative section. Relative metric ergodicity shows that this map is essentially constant. So its essential image is a point which is fixed by $G$, which is excluded.\\

In the other case, thanks to \cite[Proposition 9.2]{MR3044451}, we can write $Y_b(b')=F_b(b')\times T_b(b')$ where $\mathbf F=(F_b(b'))_b$ is a maximal Euclidean subfield of $\mathbf Y$ and $(T_b(b'))_b$ is a subfield of $\mathbf Y$; both subfields depending on $b'$.\\

Before attacking Case (iii), which is harder, let us treat Case (iv).\\

{\it Case (iv):} If for almost $(b,b')$, $T_b(b')$ is not bounded then its boundary is not empty and its circumradius $<\pi/2$ thanks to Proposition \ref{relpos}. Let $\xi_b(b')$ be the circumradius of $\bd T_b(b')$. Thanks to Lemma \ref{cenCAT1}, we get a measurable $G$-map from $B_-\times B_+$ to the  metric field $\bd\mathbf E^-$ over $B_-$. Once again relative metric ergodicity implies that $\xi_b(b')$ coincides with  a $G$-map $b\mapsto\xi_b$ and we are done.\\

{\it Case (iii):} If $T_b(b')$ is bounded for almost every $(b,b')$ then we set $t_b(b')$ to be its circumcenter and $E'_b(b')=F_b(b')\times\{t_b(b')\}$.

 The dimension of $E'_b(b')$ is measurable (Lemma \ref{dim}) in $(b,b')$ and $G$-invariant. Thus, this dimension is essentially constant equal to some $k\in\mathbb N$.

Now $(b,b')\mapsto \bd E'_b(b')$ is a $G$-section of the metric field $\mathbf{S}$ of Euclidean subspheres of dimension $k-1$  associated to $\mathbf{E}^-$ (see Lemma \ref{subeuc}). Relative metric ergodicity implies that $\bd E'_b(b')$ is essentially equal to some $s_b$ and $s=(s_b)$ is a $G$-invariant section of $\mathbf{S}$. Using the second part of Lemma \ref{subeuc}, let us consider the field $\mathbf{E}^s$ such that for any $b\in B$, $E^{s}_b$ is the set of Euclidean subspaces of $E_b$ with boundary $s_b$. By definition $(b,b')\mapsto E'_b(b')$ is a relative invariant section of $\mathbf{E}^s$ and relative metric ergodicity implies that there is a section of $\mathbf{E}^s$, that is a Euclidean subfield $\mathbf{E}'$ of $\mathbf{E}^-$ such that for almost every $(b,b')$, $E'_b(b')=E'_b$.

Our assumption on the minimality of the dimension of $\mathbf{E}^-$ implies that $E'_b=E_b$ for almost every $b\in B_-$.


 Going back to the definition of $E'_b=E_b(b')$, we see that the fact that $E'_b=E_b$ implies that the distance function $d(\cdot,E_{b'})$ must be constant in restriction to $E_b$. That is to say that $E_b$ and $E_{b'}$ are parallel for almost all $b,b'\in B_-\times B_+$.
 
Fubini's theorem tells us that there is $b\in B_-$ such that for a set $B'\subseteq B_+$ of full measure and for any $b'\in B'$, $E_{b'}$ is parallel to $E_b$. Fix $\Gamma\leq G$ a dense countable subgroup of $G$ and set $B_0=\cap_{\gamma\in \Gamma}\gamma B'$ which is a $\Gamma$-invariant subset of full measure in $B_+$. Lemma  \ref{flatproduct}  implies that the closed convex hull of the union of the $\{E_b\}_{b\in B_0}$ splits as some product $E\times T$ where each $E_b$ is parallel to $E$. Observe that $\Gamma$ preserves this convex set. By continuity of the action, the group $G$ preserves this set as well and by minimality this set is $X$. Now the assumption that $X$ has trivial Euclidean de Rham factor implies that $E$ is a point and thus the dimension of $E_{b}$ for almost every $b$ is zero. Finally $E_{b},E_{b'}$ are points of $X$ and  double metric ergodicity (Remark \ref{dme}) implies there is a fixed point.\\

\textbf{Case (II)}: Fix a point $x\in X$ and for $b\in B_-$ let $\beta_b$ be the Busemann function associated to $\xi_{b}$ vanishing at $x$. Now for $(b,b')\in B_-\times B_+$, look at the restriction $f_{b,b'}$ of $\beta_{b}$ to $E_{b'}$. It is a convex function and arguing as above, if it is not constant, one gets a relative section of $\mathbf E$ or of $\bd \mathbf  E$. Once again relative ergodicity gives an invariant Euclidean subspace of $X$ or a fixed point at infinity.

If $f_{b,b'}$ is constant, the situation is different. Actually since this situation is a measurable $G$-invariant condition on $(b,b')$ thanks to double ergodicity it holds for almost all $(b,b')\in B_-\times B_+$. Applying Lemma \ref{parallel field} to $\mathbf E$, we get that the subfield $\mathbf Y$ of flats parallel to $\mathbf E$ splits as $\mathbf E\times \mathbf Z$ and $G$ acts on $\mathbf Z$ with an invariant section $z$. Lemma \ref{flatbusemann} shows that $\xi$ is actually a section of $ \bd \mathbf Z$.

Let $\mathbf B^r$ be the subfield of $\mathbf Z$ consisting of closed balls of radius $r>0$ around $z$. Thanks to Lemma \ref{field balls}, $G$ acts on this CAT(0) field. Let $z_{b'}^r(b)$ be the point on the ray from $z_{b'}$ to $\xi_b$ at distance $r$ from $z_{b'}$. This is an invariant relative section of $\mathbf B^r$ and thanks to relative ergodicity, there is an invariant section $z^r_{b'}$ of $\mathbf B^r$ such that $z_{b'}^r(b)=z^r_{b'}$ for almost $(b,b')\in B_-\times B_+$.

Let us define $\eta_{b'}$ to be $\lim_{r\to\infty}z^r_{b'}$. By construction this is a invariant section of $\bd \mathbf X_+$ that is a $G$-map $\varphi_+\colon B_+\to\bd X$ as desired.
\end{proof}

\appendix

\section{Non-emptyness of the boundary of minimal CAT(0) spaces of finite telescopic dimension}\label{nonemptyness}

P. Py pointed out that the proof of Proposition 1.8 in \cite{MR2558883} uses implicitly the fact that the boundary of an unbounded CAT(0) space of finite telescopic dimension is non empty as soon as its group of isometries acts minimally (meaning that is there is no non-trivial  closed convex subset which is invariant under all isometries). The proof of \cite[Proposition 1.8]{MR2558883} was completed in \cite{erratumCL12} but the question about the non-emptyness of the boundary still remained. 

We answer positively this question. The example of a rooted tree with an edge of length $n$ attached to the root for any $n\in\N$ shows that the hypothesis of no fixed point for all isometries is essential. Observe that if the boundary is empty then any isometry is elliptic since any hyperbolic or parabolic isometry has a fixed point at infinity. Moreover, \cite[Theorem 1.6]{MR2558883} shows that the isometry group of such a minimal space cannot be amenable.

We emphasize that Theorem \ref{nem} relies on Theorem \ref{AB} which only uses \cite[Proposition 1.8(ii)]{MR2558883} where there is no implicit assumption. So, the gap in the cited paper is filled in. 

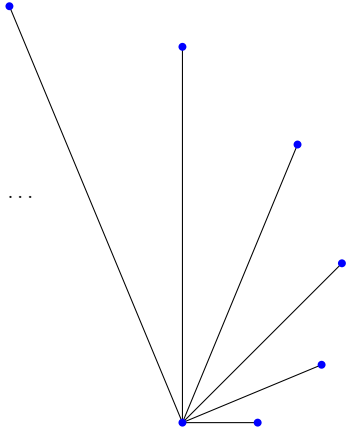
\begin{figure}[h]
\begin{center}
\definecolor{qqqqff}{rgb}{0,0,1}
\begin{tikzpicture}[
x=1.0cm,y=1.0cm]
\clip(-2.93,-1.05) rectangle (3.05,6.03);
\draw (0,0)--(1,0);
\draw (0,0)-- (1.85,0.77);
\draw (0,0)-- (2.12,2.12);
\draw (0,0)-- (1.53,3.7);
\draw (0,0)--(0,5);
\draw (-2.3,5.54)-- (0,0);
\begin{scriptsize}
\fill [color=qqqqff] (0,0) circle (1.5pt);
\fill [color=qqqqff] (1,0) circle (1.5pt);
\fill [color=qqqqff] (1.85,0.77) circle (1.5pt);
\fill [color=qqqqff] (2.12,2.12) circle (1.5pt);
\fill [color=qqqqff] (1.53,3.7) circle (1.5pt);
\fill [color=qqqqff] (0,5) circle (1.5pt);
\fill [color=qqqqff] (-2.3,5.54) circle (1.5pt);
\draw(-2.13,3) node {$\dots$};
\end{scriptsize}
\end{tikzpicture}
\caption{An unbounded CAT(0) space of dimension 1 with empty boundary} 
\end{center}
\end{figure}

\begin{proof}[Proof of Theorem \ref{nem}] First of all, we prove the result for finitely generated groups. Let $G$ be a finitely generated group acting on some CAT(0) space of finite telescopic dimension $X$ with empty boundary. This last assumption implies that there is a closed convex subset on which $G$ acts minimally. We may assume this subset is $X$ itself. Since $G$ is countable then the closed convex hull of any orbit is $G$-invariant and separable. The minimality assumption implies $X$ coincides with any such closed convex hull and thus $X$ is separable. Since $G$ is finitely generated then $G$ has a symmetric probability measure $\mu$ uniformly supported on a symmetric finite set of generators and thus the Poisson boundary $B$ associated to $\mu$ is a $G$-boundary.

We can apply Theorem \ref{AB} for the constant field of CAT(0) spaces $X$ over $B$ and we obtain a measurable map $B\to\bd X$ or an invariant subfield of flats. In any case, we get an non empty boundary or an equivariant map $B\to X$. In this last case double metric ergodicity for $G\action B$ implies that this map is essentially constant and we get a $G$-fixed point.\\

We conclude in the general case in the following way: assume $X$ has empty boundary and let $G$ be the group of all isometries of  $X$. Since $\bd X=\emptyset$, $X$ is $\mathscr{T}_c$-compact (see \S2.1). Let $\mathscr{F}$ be the collection of all finite subsets of $G$. For $F\in\mathscr{F}$ let $G_F$ be the subgroup generated by $F$. We just proved that $G_F$ has a non-empty, closed and convex set of fixed points that we denote by $X_F$. The collection $\{X_F\}_{F\in\mathscr{F}}$ is then a filtering family of $\mathscr{T}_c$-closed non-empty subspaces, by compactness $\cap_{F\in\mathscr{F}}X_F\neq\emptyset$ and thus one gets a $G$-fixed point.
\end{proof}

\begin{proof}[Proof of Corollary \ref{bm}] The proof goes as in \cite[Proposition 1.5]{MR2574740}. We reproduce it for reader's convenience. Theorem \ref{nem} shows that  $\bd X$ is not empty. Assume $\bd X$ has radius at most $\pi/2$ then there is a canonical point $\xi\in\bd X$ fixed by all isometries and such that the closed ball of radius $\pi/2$ --- for the Tits distance --- around $\xi$ coincides with $\bd X$. If $g\in\Isom(X)$ does not let the Busemann function $\beta_\xi$ invariant, then $g$ is an hyperbolic isometry and translates along some geodesic line with extremities $\eta_-,\eta_+\in\bd X$. One of this point does not belong to the closed ball of radius $\pi/2$ around $\xi$ yielding a contradiction.

Now, if $\bd X$ has radius larger than $\pi/2$, there is a minimal closed convex subset $Y\subseteq X$ with $\bd Y=\bd X$. The union $X_0$ of all such minimal spaces is closed convex and can be decomposed as $X_0=Y\times Z$. Moreover $X_0$ is $\Isom(X)$-invariant with a diagonal action. Minimality implies that $X=X_0$. If $\bd Z\neq\emptyset$ one has $\bd Y=\bd X=\bd Y*\bd Z$ leading to a contradiction. Thus $\bd Z=\emptyset$ and Theorem \ref{nem} implies $\Isom(X)\action Z$ has a fixed point. This fixed point is the whole of $Z$ by minimality and thus $X=Y$.
\end{proof}

\bibliographystyle{halpha}
\bibliography{biblio.bib}
\end{document}